\begin{document}
\bibliographystyle{halpha}

\newcommand{\C}{\ensuremath{\mathbb{C}}}
\newcommand{\R}{\ensuremath{\mathbb{R}}}
\newcommand{\N}{\ensuremath{\mathbb{N}}}
\newcommand{\Q}{\ensuremath{\mathbb{Q}}}
\newcommand{\Z}{\ensuremath{\mathbb{Z}}}

\newcommand{\g}{{\mathfrak{g}}}
\newcommand{\h}{{\mathfrak{h}}}
\newcommand{\goss}{(G_0)^{\mathrm{ss}}}
\newcommand{\go}{G_0}
\newcommand{\Gm}{{\mathbb{G}_m}}
\newcommand{\mysl}{{\mathfrak{sl}}}
\newcommand{\Fs}{F_{\mathrm{sep}}}
\renewcommand{\sc}{{\mathrm{sc}}}

\font\gfont=rpncr at 12pt               
\newcommand{\gml}{{\gfont\char171}}      
\newcommand{\gmr}{{\gfont\char187}}      

\newcommand{\GL}{\operatorname{GL}}
\newcommand{\SL}{\operatorname{SL}}
\newcommand{\Hom}{\operatorname{Hom}}
\newcommand{\charF}{\operatorname{char}}
\newcommand{\rk}{\operatorname{rk}}
\newcommand{\ad}{\operatorname{ad}}
\newcommand{\rootht}{\operatorname{ht}}
\newcommand{\Stab}{\operatorname{Stab}}
\newcommand{\Lie}{\operatorname{Lie}}
\newcommand{\im}{\operatorname{im}}

\newcommand{\gen}[1]{\langle #1\rangle}
\newcommand{\abs}[1]{\left|#1\right|}

\newtheorem{thm}[equation]{Theorem}
\newtheorem{lem}[equation]{Lemma}
\newtheorem{cor}[equation]{Corollary}
\newtheorem{prop}[equation]{Proposition}

\title{Freudenthal triple systems by root system methods}
\author{Fred~W.~Helenius}
\begin{abstract}
For certain Lie algebras $\g$, we can use a grading
$\g=\g_{-2}\oplus\g_{-1}\oplus\g_0\oplus\g_1\oplus\g_2$ and define a quartic form and a
skew-symmetric bilinear form on $\g_1$, thereby constructing a Freudenthal triple system.
The structure of the Freudenthal triple system
is examined using root system methods available in the Lie algebra context.  In the
cases $\g=E_8$ (where $\g_1$ is the minuscule representation of $E_7$)
and $\g=D_4$, we determine the groups stabilizing the quartic form and
both the quartic and bilinear forms.
\end{abstract}
\maketitle{}

\section{Introduction}
Attempts to understand the $56$-dimensional minuscule representation
of $E_7$ have been based upon an axiomatization of the properties
of a bilinear form and a quartic form defined on it, resulting in
a so-called Freudenthal triple system.  While the minuscule
representation of $E_7$ is the prototype Freudenthal triple
system, another interesting example can be found in Bhargava's
work on higher composition laws (\cite{bhargava}); the $8$-dimensional
space with a quartic form defined at the outset also forms a
Freudenthal triple system, as observed by Markus Rost.

Freudenthal triple systems have been studied previously using such tools
as Jordan algebras (\cite{springer}, \cite{seligman}), tensor algebra (\cite{freud53}),
or by an axiomatic development (\cite{brown}, \cite{ferrar}), but
in this paper we exhibit Freudenthal triple systems that are subspaces
of Lie algebras with operations defined in terms of the Lie bracket;
this allows the Freudenthal triple system to be examined using
little more than root system computations.  Our approach applies
to the exceptional Lie algebras other than $G_2$ as well as to those of
types $B$ and $D$; in particular, we obtain both the $56$-dimensional
prototype and the Bhargava/Rost example.  As an application, in each of
these cases we determine the group stabilizing the quartic form and
the group stabilizing both forms.

We begin (Section~\ref{secprelim}) by using a $\Z/5\Z$-grading on the
Lie algebras in question to define a quartic form and a
bilinear form on the grade $1$ elements.  After establishing some
basic properties of these forms (Section~\ref{secforms}) and
characterizing the so-called strictly regular elements (Section~\ref{secsr}),
we are able to verify that we have a Freudenthal triple system (Section~\ref{secfts}).

We next show how to explicitly compute the quartic form
in the simply-laced case (Section~\ref{secq}).
An eigenspace decomposition (Section~\ref{seceigen}) into four subspaces
mirrors the construction of Freudenthal triple systems from Jordan algebras.
We characterize the orbits of the Freudenthal triple system under
a group action in Section~\ref{secorbits}.

The next three sections
examine the groups whose actions stabilize the Freudenthal triple system
operations (either exactly or up to scalar multiples).
In Section~\ref{secgroups}, we show that a linear transformation
that stabilizes the quartic form up to a scalar factor likewise
stabilizes the bilinear form.
In the case $\g=E_8$ (Section~\ref{secE8}), we find that $E_7$
is the group that stabilizes both the forms on the prototypical
Freudenthal triple system, the minuscule representation of $E_7$, which
was known; we also find the group which stabilizes just the quartic
form, which is new.
In Section~\ref{secD4}, we find the groups stabilizing one or both forms
in the case $\g=D_4$; these are new results.

All the results mentioned are proved under the assumption that
$\g$ is the Lie algebra of an algebraic group $G$ that is split
over a field $F$.  In the final section, we show that our results
apply equally well to non-split groups.

In \cite{ferrar}, Ferrar uses the axiomatic definition of a
Freudenthal triple system to study its structure.  Our approach
moves in the opposite direction:  we begin with a structure
defined within a Lie algebra, study its properties and eventually
show that it satisfies the Freudenthal triple system axioms.
Although our hypotheses are totally different, our choice of results
to prove was often guided by the content of Ferrar's article.  The
table below indicates results here that are parallel to those of Ferrar
as well as results in articles by Clerc (\cite{clerc}) and Krutelevich
(\cite{krutelevich}).

\begin{table}[ht!]\centering
\begin{tabular}{|c|c|}
\hline
Lemma \ref{lxxy}& \cite{ferrar}, Cor. 2.5\\
Prop. \ref{prank1}& \cite{ferrar}, Cor. 6.2\\
Lemma \ref{leq5}& \cite{ferrar}, (5)\\
Lemma \ref{lunique}& \cite{clerc}, Lemma 8.5(b); \cite{ferrar}, Lemma 3.6\\
Prop. \ref{peigen}& \cite{ferrar}, \S4\\
Prop. \ref{porbits}& \cite{clerc}, \S\S8,9; \cite{krutelevich}, Def. 22\\
Prop. \ref{pgroups}& \cite{ferrar}, Lemma 7.3\\
\hline
\end{tabular}
\vskip6pt
\caption{Parallel results in other papers}
\end{table}

\section{Preliminaries}\label{secprelim}

Here we establish notation and conventions that will be used throughout
and summarize the key results from other papers that are used.

Let $F$ be an arbitrary field of characteristic $\ne 2,3$, and
let $G$ be a simple, connected linear algebraic group that is split over $F$,
and let $\g$ be its Lie algebra.
Let $\Psi$ be the root system of $\g$ with respect to a fixed maximal
torus~$\h$; thus $\Psi\subset\h^\vee$.  Let $\rho$ be
the highest root with respect to a fixed base of~$\Psi$.
As is usual (see, for example, \cite{humlie}, \S9.1), we define
$\gen{\beta,\gamma}=2(\beta,\gamma)/(\gamma,\gamma)$ for
any nonzero $\beta,\gamma\in\h^\vee$.
We assume~$\g$ is not of type $A$ or $C$, so there is a unique simple
root~$\alpha$ such that $\gen{\alpha,\rho}=1$ and $\alpha$ is a long root.
We will also assume that the rank of $\g$ is at least $4$.  In the later
sections, we will assume that $\g$ is simply-laced and thus of type $D$ or $E$.

For each $\beta\in\Psi$, the {\it $\alpha$-height} of $\beta$ is given by
$\gen{\beta,\rho}$; in other words, $\alpha$-height is the coefficient of $\alpha$.
Thus the $\alpha$-height is one of $-2,-1,0,1,2$.
This gives a grading $\g = \g_{-2}\oplus\g_{-1}\oplus\g_0\oplus\g_1\oplus\g_2$,
where, for each $k\ne0$, $\g_k$ is the direct sum of the root subspaces for roots
of $\alpha$-height~$k$;  $\g_0$ is the direct sum of the root subspaces for roots
of $\alpha$-height~$0$ and of~$\h$.  Equivalently, each $\g_k$ contains all
$x\in\g$ for which $[h_\rho,x] = k x$.
Since $\gen{\beta,\rho}=-2$ (resp.~$2$) only when $\beta=-\rho$ (resp.~$\rho$), we
see that $\g_{-2}$ and $\g_2$ are one-dimensional, consisting of the
root subspaces corresponding to $-\rho$ and $\rho$, respectively.

We write $x_\beta$ for a representative of the root subspace corresponding
to $\beta\in\Psi$, and always assume that such representatives have been
chosen to lie in a Chevalley basis (see \cite{humlie}, \S25).

The grading on $\g$ allows us to define several operations on $\g_1$ in a
natural way.  First, we define a quartic form $q(x)$ for $x\in\g_1$ by
$(\ad x)^4(x_{-\rho}) = q(x)x_\rho$.  We also define a $4$-linear form
$q(x,y,z,w)$ by linearization.  To specify the
scalar factor, we set $q(x,x,x,x) = q(x)$ for all $x\in\g_1$.

We also define a skew-symmetric bilinear form $\gen{x,y}$ on $\g_1$ by
$[x,y] = \gen{x,y}x_\rho$.  This form turns out to be nondegenerate (Lemma~\ref{lnondeg});
thus we may also define a symmetric triple product $xyz$ on~$\g_1$ by
requiring $\gen{w,xyz}=q(w,x,y,z)$ for all $w,x,y,z\in\g_1$.
We will show (Theorem~\ref{tFTS}) that $\g_1$ equipped with these operations
is a Freudenthal triple system.  These operations depend upon
the choice of the Chevalley basis as follows:  if instead of $x_\rho$ 
we choose $c x_\rho$ as the basis element in the root subspace
corresponding to $\rho$, then the bilinear form is scaled by $c^{-1}$ and
the quartic form is scaled by $c^{-2}$.

By Theorem~$2$ of \cite{ABS}, if $F$ is algebraically closed then
the Levi complement of a parabolic subgroup of the
linear algebraic group $G$ acts on the unipotent radical of the parabolic
subgroup with finitely many orbits.
Let $G_0$ be the subgroup of $G$ that corresponds to $\g_0$, more precisely,
the centralizer of $h_\rho$ in $G$.  In terms of the Lie
algebra, $G_0$ acts on $\g_1$ and actually partitions $\g_1$ into finitely
many orbits.

In Theorem 2.6 of \cite{rohrle}, R\"ohrle gives the number of $G_0$-orbits
in $\g_1$ for each Lie algebra $\g$ satisfying our common hypotheses.
For the Lie algebras $E_6$, $E_7$, $E_8$, there are five orbits.
Each orbit is represented by an element of the form $\sum_{i=1}^k x_{\beta_i}$
for $k = 0,\ldots,4$ where $\{\beta_1,\beta_2,\beta_3,\beta_4\}$ is a set of
mutually orthogonal roots of $\alpha$-height~$1$ (\cite{rohrle}, Theorem 4.8).
We refer to these as orbit~$0$ through orbit~$4$.
We may, and frequently do, take $\beta_1=\alpha$.

For Lie algebras of type $D_n$, each orbit has a representative as above, but there
are either two ($n > 4$) or three ($n=4$) distinct orbits generated by sums with two
terms; that is, ``orbit $2$'' is split into two or three orbits in this case; we refer
to each of them as a level~$2$ orbit.  Similarly, for Lie algebras of
type $B_n$, $n\ge4$, or $F_4$ there are two level~$2$ orbits.

For all of the types, orbit $4$ is also represented by $x_\alpha + x_{\rho-\alpha}$
(\cite{rohrle}, Corollary 4.4).

The semisimple part of $G_0$, which we denote by $\goss$, also acts on $\g_1$;
here there are finitely many orbits in the projective space $\mathbb{P}(\g_1)$.
These projective orbits correspond to the nonzero orbits under the action of $G_0$.  The action of $\goss$ is of interest because of the following fact.

\begin{lem}\label{lgoss}
The quartic form and skew-symmetric bilinear form on $\g_1$
are preserved by the action of $\goss$.
\end{lem}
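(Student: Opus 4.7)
The plan is to show that each element of $G_0$ scales each form by an algebraic character $G_0\to\Gm$, and then invoke the fact that a semisimple group admits no nontrivial characters.

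First I would observe that every $g\in G_0$ centralizes $h_\rho$, so its adjoint action commutes with $\ad h_\rho$ and hence preserves the eigenspace decomposition defining the grading.  In particular $g$ stabilizes each of $\g_1$, $\g_2$, and $\g_{-2}$.  Since $\g_{\pm 2}$ are one-dimensional, $g$ acts on them by scalars, yielding algebraic characters $\mu,\lambda\colon G_0\to\Gm$ with $g\cdot x_\rho=\mu(g)\,x_\rho$ and $g\cdot x_{-\rho}=\lambda(g)\,x_{-\rho}$.

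Next I would apply $g$ to the two defining identities on $\g_1$.  Because the adjoint action is a Lie algebra automorphism, applying $g$ to $[x,y]=\gen{x,y}x_\rho$ immediately gives $\gen{g\cdot x,g\cdot y}=\mu(g)\gen{x,y}$.  Using the conjugation identity $\ad(g\cdot x)=g\cdot\ad(x)\cdot g^{-1}$, applying $g$ to $(\ad x)^4(x_{-\rho})=q(x)x_\rho$ gives $\lambda(g)\,q(g\cdot x)=\mu(g)\,q(x)$, so $q$ is scaled by the character $\mu\lambda^{-1}$ (one could alternatively note that $[x_\rho,x_{-\rho}]=h_\rho$ and that $G_0$ fixes $h_\rho$, which forces $\mu\lambda=1$, but this refinement is not needed).

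Finally I would invoke the standard fact that a semisimple linear algebraic group has no nontrivial algebraic characters: any character $G_0\to\Gm$ factors through the abelianization of $G_0$, and $\goss$ coincides with the derived subgroup of the reductive group $G_0$ up to isogeny, so it lies in the kernel of every such character.  Therefore $\mu$ and $\mu\lambda^{-1}$ both restrict trivially to $\goss$, and both forms are preserved.  There is no real obstacle here; the only point requiring care is bookkeeping which character scales which form when transporting the defining identities under $g$.
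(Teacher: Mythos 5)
Your proof is correct, but it takes a genuinely different route from the paper's. The paper argues at the level of the Lie algebra of $\goss$: since $\rho$ is orthogonal to every root of $\alpha$-height $0$, each generator $x_\beta$ (for $\beta$ of $\alpha$-height $0$) and $h_\gamma$ (for $\gamma$ a simple root other than $\alpha$) annihilates $x_\rho$ and $x_{-\rho}$, so $\goss$ fixes these two vectors pointwise, and the forms---being defined purely in terms of the bracket, $x_\rho$ and $x_{-\rho}$---are preserved. You instead work with the whole of $\go$, showing that it scales $x_\rho$ and $x_{-\rho}$ by characters $\mu,\lambda\colon\go\to\Gm$ and hence scales $\gen{-,-}$ by $\mu$ and $q$ by $\mu\lambda^{-1}$, then kill both characters on $\goss$ by the structural fact that a connected semisimple group (equivalently, the derived subgroup of the reductive group $\go$, which any character must annihilate) has trivial character group. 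Both arguments are sound; your character computation is correct and your observation that $\mu\lambda=1$ (from $[x_\rho,x_{-\rho}]=h_\rho$) is a nice though unneeded refinement. What the paper's explicit computation buys is the concrete intermediate fact that $\goss$ fixes $x_\rho$ and $x_{-\rho}$ \emph{because of a root-orthogonality computation}, which is quoted again later (e.g., in the proof of Lemma~\ref{lstabqv}, where the same orthogonality argument is rerun for $E_6$ fixing $x_\alpha$ and $x_{\rho-\alpha}$); your approach recovers the same fixing statement but by a less portable, more top-down mechanism. What your approach buys is extra information about the full $\go$-action---namely that it scales each form by an explicit character---which meshes well with the paper's later remarks on how the forms transform under rescaling $x_\rho$ and under the $\go$-orbit arguments.
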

\begin{proof}
The elements of $\goss$ act on $\g$ by Lie algebra homomorphisms.
For any basis element of the Lie subalgebra of $\g$ corresponding to $\goss$,
i.e., any $x_\beta$ where $\beta$ is a root of $\alpha$-height~$0$ or any
$h_\gamma$ where $\gamma$ is a simple root other than $\alpha$, we have
$[x_\beta,x_\rho]=0$ and $[h_\gamma,x_\rho]=0$ because $\rho$ is orthogonal
to every root of $\alpha$-height~$0$.  Similarly, we also have
$[x_\beta,x_{-\rho}]=0$ and $[h_\gamma,x_{-\rho}]=0$.  Thus elements of
$\goss$ fix $x_\rho$ and $x_{-\rho}$.
The quartic form and bilinear form we have
defined on $\g_1$ depend only on the Lie bracket, $x_\rho$ and $x_{-\rho}$, so
both are preserved by the action of $\goss$.
\end{proof}

By Th\'eor\`eme 3.13 in Borel \& Tits (\cite{boreltits}),
the closure of any of the $G_0$-orbits is its union with all smaller (i.e., lower level) orbits.
In particular, the largest orbit, orbit 4, is dense in $\g_1$.

The statements about orbits above assume that $F$ is
algebraically closed.  For a general $F$, geometric statements
about orbits will at least be true over the algebraic closure of $F$.
The algebraic consequences, such as Lemma~\ref{lgoss} above, remain true
for any $F$, since they involve polynomial relations defined over $F$.
To avoid repetition, we make this convention:  \emph{all statements
about orbits are understood to refer to the orbits over the algebraic
closure.}

As mentioned earlier, we have assumed for convenience that $G$ is
split over $F$.  Our results apply more generally, as explained in
Section~\ref{secnonsplit}.

\section{The bilinear and quartic forms}\label{secforms}

Given any $x,y\in\g_1$, the Lie algebra product lies in
$\g_2=F x_\rho$; thus we may define a bilinear form $\gen{x,y}$
on $\g_1$ by $[x,y] = \gen{x,y}x_\rho$.  This form is clearly
skew-symmetric.

\begin{lem}\label{lnondeg}
The bilinear form $\gen{-,-}$ on $\g_1$ is nondegenerate.
\end{lem}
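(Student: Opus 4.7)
The plan is to work in the basis of $\g_1$ given by the Chevalley root vectors $x_\beta$ as $\beta$ runs over the roots of $\alpha$-height~$1$, and to show that the Gram matrix of $\gen{-,-}$ in this basis is block diagonal with manifestly nondegenerate $2\times2$ blocks.

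First I would observe that for two roots $\beta,\gamma$ of $\alpha$-height~$1$, the Chevalley bracket $[x_\beta,x_\gamma]=N_{\beta,\gamma}x_{\beta+\gamma}$ is nonzero exactly when $\beta+\gamma\in\Psi$. Any such sum has $\alpha$-height~$2$, and since $\g_2$ is one-dimensional, the only candidate is $\rho$ itself. Therefore $\gen{x_\beta,x_\gamma}\ne0$ if and only if $\gamma=\rho-\beta$.

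The main step, and the one I expect to be the principal obstacle, is to verify that $\rho-\beta$ actually lies in $\Psi$ for every such $\beta$. Since $\alpha$ is simple, every root of positive $\alpha$-height is positive, so $\rho+\beta$ cannot be a root (it would exceed the highest root $\rho$). Hence $\beta$ sits at the top of its $\rho$-string; the string then satisfies $p-q=\gen{\beta,\rho}=1$ with $q=0$, forcing $\beta-\rho$, and thus $\rho-\beta$, to be a root. Moreover $\rho-\beta\ne\beta$, for otherwise $\rho/2$ would be a root.

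Consequently $\beta\mapsto\rho-\beta$ is a fixed-point-free involution on the set of $\alpha$-height~$1$ roots. Ordering a basis of $\g_1$ so that each pair $\{x_\beta,x_{\rho-\beta}\}$ is listed consecutively, the Gram matrix of $\gen{-,-}$ splits as a direct sum of $2\times2$ blocks
\[
\begin{pmatrix}0 & N_{\beta,\rho-\beta}\\ -N_{\beta,\rho-\beta} & 0\end{pmatrix},
\]
each of which is nondegenerate because $N_{\beta,\rho-\beta}\ne0$. This yields the desired nondegeneracy; the root-string argument above is the only nontrivial input, with the rest being standard bookkeeping about Chevalley bases.
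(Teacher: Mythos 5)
Your proof is correct and follows essentially the same route as the paper: both exhibit the Gram matrix in the basis $\{x_\beta\}$ as having exactly one nonzero entry per row and column, located at the pairing of $x_\beta$ with $x_{\rho-\beta}$, the key input being that $\gen{\beta,\rho}=1$ forces $\rho-\beta\in\Psi$ (which the paper cites directly and you derive via the $\rho$-string through $\beta$). Your explicit check that $\rho-\beta\ne\beta$ is a small point the paper leaves implicit, but otherwise the arguments coincide.
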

\begin{proof}
The elements $x_\beta$ with $\beta$ a root of $\alpha$-height~$1$ form a
basis for $\g_1$.  Consider the matrix of the form with respect to this basis;
the entries are of the form $\gen{x_\beta,x_\gamma}$ with $\beta,\gamma$ roots of
$\alpha$-height~$1$.  Such an entry is zero unless $[x_\beta,x_\gamma]$ is
a nonzero element of $F x_\rho$; that is, unless $\beta+\gamma=\rho$.
Since $\gen{\beta,\rho}=1$, $\rho-\beta$ is a root (of $\alpha$-height~$1$);
hence each row and each column of the matrix contains exactly one nonzero entry.
Such a matrix (sometimes called a monomial matrix) is the product of a
diagonal matrix with nonzero entries on the diagonal and a permutation matrix,
hence it is invertible.  Thus the form is nondegenerate.
\end{proof}

Since $x_{-\rho}$ is in $\g_{-2}$, for any $x\in\g_1$ the
value $[x,[x,[x,[x,x_{-\rho}]]]]$, or, more briefly,
$(\ad x)^4(x_{-\rho})$, is in $\g_2$.  Thus
we may define a quartic form $q(x)$ for $x\in\g_1$ by
$(\ad x)^4(x_{-\rho}) = q(x)x_\rho$.  This in turn gives rise to a
fully symmetric $4$-linear form $q(x,y,z,w)$ defined by setting
$q(x,x,x,x)=q(x)$ and extending by linearization.

\begin{lem}\label{lqsum}
Let $\beta_1,\beta_2,\beta_3,\beta_4$ be roots of $\alpha$-height~$1$.  The
value of the $4$-linear form $q(x_{\beta_1},x_{\beta_2},x_{\beta_3},x_{\beta_4})$
is given by
\[
q(x_{\beta_1},x_{\beta_2},x_{\beta_3},x_{\beta_4})x_\rho =
\frac1{4!}\sum_{\pi\in S_4} (\ad x_{\beta_{\pi(1)}}\circ\ad x_{\beta_{\pi(2)}}
\circ\ad x_{\beta_{\pi(3)}}\circ\ad x_{\beta_{\pi(4)}})(x_{-\rho}),
\]
where $S_4$ is the symmetric group on $\{1,2,3,4\}$.
\end{lem}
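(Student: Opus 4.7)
The plan is to apply the standard polarization identity that recovers a symmetric multilinear form from its diagonal restriction. I would introduce formal parameters $t_1,\ldots,t_4$, set $x = \sum_{i=1}^4 t_i x_{\beta_i}$, and compare the coefficients of the monomial $t_1 t_2 t_3 t_4$ on the two sides of the defining relation $(\ad x)^4(x_{-\rho}) = q(x)\,x_\rho$.

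For the left-hand side, I would expand $(\ad x)^4 = \bigl(\sum_i t_i\,\ad x_{\beta_i}\bigr)^4$ by multilinearity of $\ad$, obtaining a sum over ordered $4$-tuples $(i_1,i_2,i_3,i_4)\in\{1,2,3,4\}^4$ of terms $t_{i_1}t_{i_2}t_{i_3}t_{i_4}\,(\ad x_{\beta_{i_1}}\circ\cdots\circ\ad x_{\beta_{i_4}})(x_{-\rho})$. Only those tuples in which each index $1,2,3,4$ appears exactly once contribute to the coefficient of $t_1 t_2 t_3 t_4$, and these are parametrized by $\pi\in S_4$; hence the coefficient is exactly $\sum_{\pi\in S_4}(\ad x_{\beta_{\pi(1)}}\circ\cdots\circ\ad x_{\beta_{\pi(4)}})(x_{-\rho})$.

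For the right-hand side, the normalization $q(x,x,x,x) = q(x)$ combined with the full symmetry of $q(\cdot,\cdot,\cdot,\cdot)$ allows me to expand $q\bigl(\sum t_i x_{\beta_i}\bigr)$ as $\sum t_{i_1}\cdots t_{i_4}\,q(x_{\beta_{i_1}},\ldots,x_{\beta_{i_4}})$ over ordered $4$-tuples. By symmetry the coefficient of $t_1 t_2 t_3 t_4$ collapses to $4!\,q(x_{\beta_1},x_{\beta_2},x_{\beta_3},x_{\beta_4})$. Equating this with the coefficient computed on the left and dividing by $4!$ yields the stated identity.

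I do not anticipate any real obstacle: the lemma is simply the polarization formula in disguise, specialized to the quartic form $q$ built from iterated $\ad$-actions. The normalization $q(x,x,x,x)=q(x)$ has been chosen precisely so that the factor $4!$ appears uncancelled in the denominator. The only thing to watch is the bookkeeping of the symmetrization, which is automatic once both sides are viewed as polynomials in the formal parameters $t_i$; the claimed scalar equality then reads off as the coefficient of $t_1t_2t_3t_4$.
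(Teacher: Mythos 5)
Your proposal is correct and is essentially the paper's own argument: the paper performs the same polarization, merely normalizing one of your four parameters to $1$ (it sets $x = x_{\beta_1}+\lambda x_{\beta_2}+\mu x_{\beta_3}+\nu x_{\beta_4}$ and reads off the coefficient of $\lambda\mu\nu$, which is $24\,q(x_{\beta_1},x_{\beta_2},x_{\beta_3},x_{\beta_4})x_\rho$ on one side and the sum over $S_4$ on the other). The bookkeeping you describe is exactly what the paper does.
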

\begin{proof}
Let $\lambda,\mu,\nu$ be indeterminates.  By the definition of the quartic form,
we have
\[
q(x_{\beta_1}+\lambda x_{\beta_2}+\mu x_{\beta_3}+\nu x_{\beta_4})x_\rho =
(\ad x_{\beta_1}+\lambda x_{\beta_2}+\mu x_{\beta_3}+\nu x_{\beta_4})^4 (x_{-\rho}).
\]
Replacing the quartic form by the equivalent $4$-linear form,
the coefficient of $\lambda\mu\nu$ on the left-hand side is
$24 q(x_{\beta_1},x_{\beta_2},x_{\beta_3},x_{\beta_4})x_\rho$.
On the right-hand side, the coefficient of $\lambda\mu\nu$ is
$\sum_{\pi\in S_4} (\ad x_{\beta_{\pi(1)}}\circ\ad x_{\beta_{\pi(2)}}
\circ\ad x_{\beta_{\pi(3)}}\circ\ad x_{\beta_{\pi(4)}})(x_{-\rho})$.
Equating the coefficients yields the result.
\end{proof}

\begin{cor}\label{ctworho}
Let $\beta_1,\beta_2,\beta_3,\beta_4$ be roots of $\alpha$-height~$1$;
then the $4$-linear form $q(x_{\beta_1},x_{\beta_2},x_{\beta_3},x_{\beta_4}) = 0$ whenever
$\beta_1+\beta_2+\beta_3+\beta_4 \ne 2\rho$.
\end{cor}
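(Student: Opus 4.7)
The plan is to invoke Lemma \ref{lqsum} and show that every term on its right-hand side lies in a single weight subspace of $\g$, a subspace which contains a nonzero multiple of $x_\rho$ only when $\beta_1+\beta_2+\beta_3+\beta_4=2\rho$.

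The key observation is that $\ad x_\gamma$ maps the weight subspace $\g_\delta$ into $\g_{\delta+\gamma}$, with the convention that $\g_0=\h$. Iterating four times yields
\[
(\ad x_{\beta_{\pi(1)}}\circ\ad x_{\beta_{\pi(2)}}\circ\ad x_{\beta_{\pi(3)}}\circ\ad x_{\beta_{\pi(4)}})(x_{-\rho})\in\g_{\mu},
\]
where $\mu=\beta_1+\beta_2+\beta_3+\beta_4-\rho$; this subspace does not depend on the permutation $\pi$, because addition in $\h^\vee$ is commutative. Hence the entire sum in Lemma \ref{lqsum} lies in $\g_\mu$.

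Now the $\alpha$-height of $\mu$ equals $4-2=2$, and $\rho$ is the unique root of $\alpha$-height $2$. If $\mu\ne\rho$, then $\mu$ is not a root (and is nonzero because its $\alpha$-height is nonzero), so $\g_\mu=0$ and every summand in Lemma \ref{lqsum} vanishes. Equating the zero right-hand side with $q(x_{\beta_1},x_{\beta_2},x_{\beta_3},x_{\beta_4})\,x_\rho$ then forces $q(x_{\beta_1},x_{\beta_2},x_{\beta_3},x_{\beta_4})=0$, as claimed. No serious obstacle is expected: the statement is a bookkeeping consequence of the root-lattice grading on $\g$. The only small subtlety is that an intermediate bracket may land in $\h$ (when a partial sum of the $\beta_{\pi(i)}$ equals $\rho$), but this does not affect the argument since $\ad x_\gamma$ still shifts weights by $\gamma$ when acting on $\h$.
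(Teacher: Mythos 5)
Your proof is correct and follows essentially the same route as the paper: the paper likewise observes that each summand from Lemma~\ref{lqsum} lies in the weight space for $\beta_1+\beta_2+\beta_3+\beta_4-\rho$, so a nonzero summand forces $\beta_1+\beta_2+\beta_3+\beta_4-\rho=\rho$. Your extra remarks (uniqueness of $\rho$ as the root of $\alpha$-height~$2$, and the behaviour of $\ad x_\gamma$ on $\h$) just make explicit what the paper leaves implicit.
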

\begin{proof}
If the summand $(\ad x_{\beta_{\pi(1)}}\circ\ad x_{\beta_{\pi(2)}}\circ
\ad x_{\beta_{\pi(3)}}\circ\ad x_{\beta_{\pi(4)}})(x_{-\rho})$ in the previous lemma
is nonzero, it must be some multiple of $x_\rho$; that is, we must have
$\beta_1+\beta_2+\beta_3+\beta_4+(-\rho)=\rho$.
\end{proof}

To establish that the quartic form is nonzero, we will require some information about
the structure constants that define the multiplication in~$\g$.
Given roots $\beta,\gamma\in\Psi$, we
denote the corresponding structure constant
by $c_{\beta,\gamma}$; that is, we define $c_{\beta,\gamma}$ so that
$[x_\beta,x_\gamma] = c_{\beta,\gamma}x_{\beta+\gamma}$.  In particular,
$c_{\beta,\gamma}=0$ if $\beta+\gamma$ is not a root.
As always, we are assuming that the elements $x_\beta$, $x_\gamma$,
etc. are in a Chevalley basis.
Theorem 4.1.2 in \cite{carter} provides the following useful
facts about these structure constants:
\begin{enumerate}
	\item[1.] If $\beta,\gamma\in\Psi$, then $c_{\beta,\gamma} = - c_{\gamma,\beta}$.
	\item[2.] If $\beta,\gamma,\delta\in\Psi$ are long roots such that $\beta+\gamma+\delta=0$, then
   $c_{\beta,\gamma} = c_{\gamma,\delta} = c_{\delta,\beta}$.
	\item[3.] If $\beta,\gamma\in\Psi$ are long roots and $\beta+\gamma$ is a root,
	 then $c_{\beta,\gamma}=\pm 1$.
	\item[4.] If $\beta,\gamma,\delta,\epsilon\in\Psi$ are long roots such that
	 $\beta+\gamma+\delta+\epsilon=0$ and no two are opposite, then
	 \begin{equation}\label{eqcarter4}
	 c_{\beta,\gamma} c_{\delta,\epsilon} +
	 c_{\gamma,\delta} c_{\beta,\epsilon} +
	 c_{\delta,\beta} c_{\gamma,\epsilon} = 0.
	 \end{equation}
\end{enumerate}
\noindent For $2$, $3$ and $4$ we have simplified the statements in \cite{carter} by
requiring the roots to be long.  Since $x_\beta,x_\gamma$ are in
a Chevalley basis, we also have $c_{\beta,\gamma} = - c_{-\beta,-\gamma}$
for all $\beta,\gamma\in\Psi$ and $[x_\beta,x_{-\beta}]=h_\beta$
(\cite{humlie}, \S25.2).  These facts will be used freely, usually without further comment.

We now use the facts above to compute the value of the $4$-linear form
on some special arguments.

\begin{lem}\label{lcase1}
If $\beta$ is a long root of $\alpha$-height~$1$, then
\begin{equation}\label{eqcomp1}
q(x_\beta,x_\beta,x_{\rho-\beta},x_{\rho-\beta}) = 1.
\end{equation}
\end{lem}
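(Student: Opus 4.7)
The plan is to expand using Lemma~\ref{lqsum} with $\beta_1=\beta_2=\beta$ and $\beta_3=\beta_4=\rho-\beta$. First observe that $\rho-\beta$ is a long root of $\alpha$-height~$1$: since $\beta$ and $\rho$ are long and $\gen{\beta,\rho}=1$, the reflection $s_\beta(\rho)=\rho-\beta$ lies in $\Psi$. Because the $\beta_i$ occur in two equal pairs, the $24$ summands over $S_4$ collapse to $\binom{4}{2}=6$ distinct ordered words, each appearing with multiplicity $2!\,2!=4$, so the effective prefactor in Lemma~\ref{lqsum} becomes $1/6$.

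Next I would eliminate the words that vanish. The key fact is that $\rho-2\beta$ is not a root: since all roots in sight are long, direct computation gives $(\rho-2\beta,\rho-2\beta)=3(\beta,\beta)$, which exceeds the squared length of any root (equivalently, $\beta+\rho$ is not a root, as two long roots at $60^\circ$ never sum to a root, so neither is its reflection through $\beta$). This kills precisely the two words in which $\ad x_\beta$ is applied twice to $x_{\beta-\rho}$ or $\ad x_{\rho-\beta}$ twice to $x_{-\beta}$.

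For each of the four surviving words I would compute the iterated bracket chain using $[x_\gamma,x_{-\gamma}]=h_\gamma$ and $[h_\gamma,x_\delta]=\gen{\delta,\gamma}x_\delta$, together with $\gen{\beta,\rho-\beta}=-1$. Writing $c_1=c_{\beta,-\rho}$, $c_2=c_{\rho-\beta,-\rho}$, and $c_3=c_{\beta,\rho-\beta}$, the four contributions to the coefficient of $x_\rho$ turn out to be $-c_1c_3$, $-2c_1c_3$, $c_2c_3$, and $2c_2c_3$, summing to $3c_3(c_2-c_1)$. To finish, apply property~$2$ of the Chevalley constants to the long-root triple $\beta+(\rho-\beta)+(-\rho)=0$: this yields $c_3=c_2=c_{-\rho,\beta}=-c_1$, so the sum of the four words equals $6c_3^2$, and property~$3$ gives $c_3^2=1$. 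Multiplying by the prefactor $1/6$ then produces the coefficient~$1$ of $x_\rho$, as claimed.

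The main obstacle is simply keeping signs straight through the repeated brackets and the Chevalley-constant identifications. The conceptual point is that property~$2$ forces $c_1=-c_2=-c_3$; without this identity the four surviving contributions would not assemble into the basis-independent constant $6c_3^2$, and the answer would appear to depend on the choice of Chevalley basis rather than equalling~$1$.
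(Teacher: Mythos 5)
Your proof is correct and is essentially the computation the paper performs: the paper extracts the coefficient of $\lambda^2$ from $(\ad(x_\beta+\lambda x_{\rho-\beta}))^4(x_{-\rho})$, arriving at the same expression $3c_{\beta,\rho-\beta}(c_{\rho-\beta,-\rho}-c_{\beta,-\rho})=6c_{\beta,\rho-\beta}^2$ that you obtain by enumerating the surviving words in the $S_4$ sum, and it closes with the same appeal to the Chevalley structure-constant identity for the long-root triple $\beta,\rho-\beta,-\rho$. Your individual contributions $-c_1c_3$, $-2c_1c_3$, $c_2c_3$, $2c_2c_3$ and the vanishing of the two words via $\rho-2\beta\notin\Psi$ all check out.
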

\begin{proof}
By hypothesis, $\gen{\beta,\rho}=1$, so $\rho-\beta$ is also a root.
We begin by finding $q(x_\beta+\lambda x_{\rho-\beta})$, which is given by
$(\ad x_\beta+\lambda x_{\rho-\beta})^4(x_{-\rho}) = q(x_\beta+\lambda x_{\rho-\beta})x_\rho$.
The left-hand side can be calculated
by repeatedly applying $\ad x_\beta+\lambda x_{\rho-\beta}$.  For the first step,
\[
[x_\beta+\lambda x_{\rho-\beta},x_{-\rho}] =
  c_{\beta,-\rho} x_{\beta-\rho} + \lambda c_{\rho-\beta,-\rho} x_{-\beta}.
\]
Writing $a$ for $c_{\beta,-\rho}$ and $b$ for $c_{\rho-\beta,-\rho}$,
we continue:
\begin{align*}
&[x_\beta+\lambda x_{\rho-\beta},a x_{\beta-\rho} + \lambda b x_{-\beta}] =
  \lambda a h_{\rho-\beta} + \lambda b h_\beta,\\
&[x_\beta+\lambda x_{\rho-\beta},\lambda a h_{\rho-\beta} + \lambda b h_\beta] =
  -2\lambda^2 a x_{\rho-\beta} -2\lambda b x_\beta
+\lambda a x_\beta + \lambda^2 b x_{\rho-\beta},\\
\begin{split}
[x_\beta+\lambda x_{\rho-\beta},-2\lambda^2 a x_{\rho-\beta}
  -2\lambda b x_\beta+\lambda a x_\beta + \lambda^2 b x_{\rho-\beta}]
=\qquad\qquad\qquad\quad\\
3\lambda^2 c_{\beta,\rho-\beta}(b-a)x_\rho.
\end{split}
\end{align*}
Since $\beta$, $-\rho$ and $\rho-\beta$ are long roots that sum to zero, we have
$a = c_{\beta,-\rho} = c_{\rho-\beta,\beta} = -c_{\beta,\rho-\beta}$ and
$b = c_{\rho-\beta,-\rho} = c_{\beta,\rho-\beta} = -a$.  Since the
structure constant $c_{\beta,\rho-\beta}$ is $\pm1$,
the result is $6 \lambda^2 c_{\beta,\rho-\beta}^2=6 \lambda^2$.
The term in $\lambda^2$ resulting from expanding
$q(x_\beta+\lambda x_{\rho-\beta})$ is $6 \lambda^2 q(x_\beta,x_\beta,x_{\rho-\beta},x_{\rho-\beta})$,
so we have
$q(x_\beta,x_\beta,x_{\rho-\beta},x_{\rho-\beta}) = 1$,
as required.
\end{proof}

Since there is always a long root of $\alpha$-height~$1$ (e.g., $\alpha$ itself),
we have established that the $4$-linear form and thus also the quartic form
are not identically zero.  In particular, taking $\beta = \alpha$ and $\lambda = 1$,
we have $q(x_\alpha + x_{\rho-\alpha}) = 6$.

In the next section we will also need to know that the $4$-linear form is
nonzero in another special case.  We show this after the next two lemmas.
The first is an easy but useful observation; the second is a fact about
structure constants that will also be used in Section~\ref{secq}.

\begin{lem}\label{lrhominus}
If $\beta$ is root of $\alpha$-height~$1$, then $\rho-\beta$ is also
a root, is also of $\alpha$-height~$1$, and has the same length as $\beta$.
If $\beta$ and $\gamma$ are orthogonal roots of $\alpha$-height~$1$,
then $\rho-\beta$ and $\rho-\gamma$ are also orthogonal.
\end{lem}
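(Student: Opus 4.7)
The plan is to reduce everything to the single observation that $\langle \beta,\rho\rangle = 1$ (which is just the definition of $\alpha$-height~$1$) translates via the formula $\langle \beta,\rho\rangle = 2(\beta,\rho)/(\rho,\rho)$ into the numerical identity $(\beta,\rho) = \tfrac12(\rho,\rho)$. Once this is in hand, all three claims fall out of short computations.

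First, to see that $\rho-\beta$ is a root, I would apply the Weyl reflection $s_\rho$ to $\beta$: since $s_\rho(\beta) = \beta - \langle\beta,\rho\rangle\rho = \beta - \rho$ lies in $\Psi$, so does its negative $\rho-\beta$. Note we need $\beta\ne\rho$ here, which holds because $\beta$ has $\alpha$-height~$1$ and $\rho$ has $\alpha$-height~$2$. The $\alpha$-height of $\rho-\beta$ is then $\langle\rho-\beta,\rho\rangle = 2 - 1 = 1$ by linearity of $\langle -,\rho\rangle$ in its first argument.

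For the length claim, I would either invoke the fact that $s_\rho$ is an isometry (so $\beta-\rho$ has the same length as $\beta$, hence so does $\rho-\beta$), or compute directly:
\[
(\rho-\beta,\rho-\beta) = (\rho,\rho) - 2(\beta,\rho) + (\beta,\beta) = (\rho,\rho) - (\rho,\rho) + (\beta,\beta) = (\beta,\beta),
\]
using $(\beta,\rho) = \tfrac12(\rho,\rho)$.

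Finally, for the orthogonality of $\rho-\beta$ and $\rho-\gamma$ when $(\beta,\gamma)=0$, I would expand
\[
(\rho-\beta,\rho-\gamma) = (\rho,\rho) - (\beta,\rho) - (\rho,\gamma) + (\beta,\gamma) = (\rho,\rho) - \tfrac12(\rho,\rho) - \tfrac12(\rho,\rho) + 0 = 0,
\]
where both $(\beta,\rho)$ and $(\rho,\gamma)$ equal $\tfrac12(\rho,\rho)$ by the same $\alpha$-height~$1$ hypothesis applied to each of $\beta,\gamma$. There is no real obstacle here; the only thing to keep straight is that the $\alpha$-height assumption is the same as the inner product identity $(\beta,\rho) = \tfrac12(\rho,\rho)$, which is what powers each of the three assertions.
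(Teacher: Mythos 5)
Your proof is correct and follows essentially the same route as the paper's: the key input throughout is $\langle\beta,\rho\rangle=1$, and your height and orthogonality computations are the same as those in the paper (with the reflection $s_\rho$ standing in for the standard root-string fact used to see that $\rho-\beta$ is a root). The only real difference is the length claim, where the paper argues by a long/short case analysis using that $\rho$ is long, whereas you compute $(\rho-\beta,\rho-\beta)=(\beta,\beta)$ directly from $(\beta,\rho)=\tfrac12(\rho,\rho)$ (or invoke that $s_\rho$ is an isometry) --- a slightly cleaner argument that gives equality of lengths without cases.
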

\begin{proof}
We have $\gen{\beta,\rho}=1$, so $\rho-\beta$ is a root.
The $\alpha$-height of $\rho-\beta$ is $\gen{\rho-\beta,\rho}=
\gen{\rho,\rho}-\gen{\beta,\rho}=2-1=1$.
The highest root $\rho$ is long, so if $\beta$ is long, then so is
$\rho-\beta$.  If $\beta$ is short, $\rho-\beta$ cannot
be long, for we then have that $\rho-(\rho-\beta)=\beta$ is long.

If $\gen{\beta,\gamma}=0$, then
\begin{align*}
\gen{\rho-\beta,\rho-\gamma}&=\frac2{(\rho-\gamma,\rho-\gamma)}(\rho-\beta,\rho-\gamma)\\
&=\frac2{(\rho-\gamma,\rho-\gamma)}((\rho,\rho)-(\gamma,\rho)-(\beta,\rho)+(\beta,\gamma))\\
&=\frac{(\rho,\rho)}{(\rho-\gamma,\rho-\gamma)}(\gen{\rho,\rho}-\gen{\gamma,\rho}-\gen{\beta,\rho})\\
&= 0.\qedhere
\end{align*}
\end{proof}

\begin{lem}\label{laux2}
Let $\beta$ and $\gamma$ be two orthogonal long roots of $\alpha$-height~$1$.
Each of $\beta-\rho$, $\gamma-\rho$ and $\rho-\beta-\gamma$ is a root;
each of the structure constants $c_{\beta,\gamma-\rho}$,
$c_{\gamma,\beta-\rho}$, $c_{\beta,-\rho}$, $c_{\gamma,-\rho}$ 
is $\pm 1$ and their product is $1$.
\end{lem}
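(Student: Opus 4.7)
The plan is to verify first that the four listed expressions are actually roots (and long roots), so that fact~3 from Carter applies, and then to extract the product relation from a well-chosen Jacobi identity.

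First I would handle the root claims. Since $\beta$ and $\gamma$ are long roots of $\alpha$-height~$1$, Lemma~\ref{lrhominus} gives that $\rho-\beta$ and $\rho-\gamma$ are long roots; hence so are their negatives $\beta-\rho$ and $\gamma-\rho$. For $\rho-\beta-\gamma$, I would compute the inner product of $\rho-\beta$ with $\gamma$. Since $\gamma$ is long of $\alpha$-height~$1$, $(\gamma,\rho)=(\rho,\rho)/2$, and orthogonality gives $(\beta,\gamma)=0$, so $\gen{\rho-\beta,\gamma}=1$. Because $\rho-\beta\ne\gamma$ (otherwise $(\beta,\gamma)$ would be $-(\rho,\rho)/2\ne 0$), the root-string property gives that $(\rho-\beta)-\gamma$ is a root. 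A direct inner-product calculation using $(\beta,\gamma)=0$ and $(\beta,\rho)=(\gamma,\rho)=(\rho,\rho)/2$ shows $(\rho-\beta-\gamma,\rho-\beta-\gamma)=(\rho,\rho)$, so $\rho-\beta-\gamma$ is long. Consequently each of $c_{\beta,\gamma-\rho}$, $c_{\gamma,\beta-\rho}$, $c_{\beta,-\rho}$, $c_{\gamma,-\rho}$ is a structure constant of long roots summing to a long root, hence $\pm1$ by fact~3.

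Next I would observe that $\beta+\gamma$ is \emph{not} a root: its $\alpha$-height is $2$, so if it were a root it would be $\rho$, but $(\beta+\gamma,\beta+\gamma)=2(\rho,\rho)\ne(\rho,\rho)$. Therefore $[x_\beta,x_\gamma]=0$. Applying the Jacobi identity to $x_\beta$, $x_\gamma$, $x_{-\rho}$ yields
\[
[x_\beta,[x_\gamma,x_{-\rho}]] = [x_\gamma,[x_\beta,x_{-\rho}]],
\]
which expands to
\[
c_{\gamma,-\rho}\,c_{\beta,\gamma-\rho}\,x_{\beta+\gamma-\rho}
 = c_{\beta,-\rho}\,c_{\gamma,\beta-\rho}\,x_{\beta+\gamma-\rho},
\]
and therefore $c_{\gamma,-\rho}\,c_{\beta,\gamma-\rho} = c_{\beta,-\rho}\,c_{\gamma,\beta-\rho}$. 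The product of all four structure constants is then $(c_{\beta,-\rho}\,c_{\gamma,\beta-\rho})^2$, and since each factor is $\pm1$, this square equals $1$.

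The only delicate step is confirming that $\rho-\beta-\gamma$ really is a (long) root, for which the orthogonality of $\beta$ and $\gamma$ together with the long-root condition are essential; once that is in hand, the Jacobi identity gives the product relation essentially for free, so no case analysis or sign-tracking of the individual $\pm 1$'s is needed.
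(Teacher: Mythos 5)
Your proof is correct and follows essentially the same route as the paper: establish that the relevant roots exist and are long (via Lemma~\ref{lrhominus} and the orthogonality computation $\gen{\rho-\beta,\gamma}=\gen{\rho,\gamma}=1$), so the structure constants are $\pm1$, then derive the two-term relation $c_{\beta,\gamma-\rho}c_{\gamma,-\rho}=c_{\gamma,\beta-\rho}c_{\beta,-\rho}$ and conclude the product of all four is a square, hence $1$. The only difference is cosmetic: the paper obtains that relation by substituting $(\rho-\beta-\gamma,\beta,\gamma,-\rho)$ into Carter's identity \eqref{eqcarter4} and then rewriting via the cyclic property of structure constants, whereas you apply the Jacobi identity directly to $x_\beta,x_\gamma,x_{-\rho}$ and use $[x_\beta,x_\gamma]=0$, which yields the identical equation a bit more cleanly.
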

\begin{proof}
By Lemma~\ref{lrhominus}, $\rho-\beta$ and $\rho-\gamma$ are long roots,
so their negatives are as well.
Since $\beta$ and $\gamma$ are orthogonal, $\gen{\rho-\beta,\gamma}=
\gen{\rho,\gamma}=1$, so $\rho-\beta-\gamma$ is a root.
Since these are roots, the specified structure constants are nonzero;
since all roots involved are long, they are $\pm 1$.

We apply $\eqref{eqcarter4}$, replacing $\beta,\gamma,\delta,\epsilon$ with
$\rho-\beta-\gamma,\beta,\gamma,-\rho$ to yield
\[
	 c_{\rho-\beta-\gamma,\beta} c_{\gamma,-\rho} +
	 c_{\beta,\gamma} c_{\rho-\beta-\gamma,-\rho} +
	 c_{\gamma,\rho-\beta-\gamma} c_{\beta,-\rho} = 0.
\]
The structure constants in the middle term are zero since $\beta+\gamma$ is not a root.
Thus we find
$c_{\rho-\beta-\gamma,\beta} c_{\gamma,-\rho} =
-c_{\gamma,\rho-\beta-\gamma} c_{\beta,-\rho}$.
Substituting $c_{\rho-\beta-\gamma,\beta}=c_{\beta,\gamma-\rho}$ and
$c_{\gamma,\rho-\beta-\gamma} = c_{\beta-\rho,\gamma} = -c_{\gamma,\beta-\rho}$,
we have
$c_{\beta,\gamma-\rho} c_{\gamma,-\rho} = c_{\gamma,\beta-\rho} c_{\beta,-\rho}$.
Since each side is $\pm1$, the product of all four structure constants is~$1$.
\end{proof}

\begin{lem}\label{lcase2}
If $\beta$ and $\gamma$ are two orthogonal long roots of $\alpha$-height~$1$,
then
\begin{equation}\label{eqcomp2}
q(x_\beta,x_\gamma,x_{\rho-\beta},x_{\rho-\gamma}) =
  -\frac12 c_{\beta,-\rho} c_{\gamma,-\rho} \ne 0.
\end{equation}
\end{lem}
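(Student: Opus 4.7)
The plan is to apply Lemma~\ref{lqsum}, which rewrites $24\,q(x_\beta, x_\gamma, x_{\rho-\beta}, x_{\rho-\gamma})\,x_\rho$ as a sum of $24$ iterated brackets $(\ad x_{r_{\pi(1)}}\circ\ad x_{r_{\pi(2)}}\circ\ad x_{r_{\pi(3)}}\circ\ad x_{r_{\pi(4)}})(x_{-\rho})$ with $(r_1,r_2,r_3,r_4)=(\beta,\gamma,\rho-\beta,\rho-\gamma)$. Starting from $x_{-\rho}\in\g_{-2}$, each successive bracket raises the $\alpha$-height by one and must end in $Fx_\rho$, so each partial sum $\sum_{i\le k}r_{\pi(i)}-\rho$ must be a legitimate weight (a root of $\alpha$-height $k-2$, or $0\in\h$ at stage $2$).

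First I would classify permutations by the unordered pair $\{r_{\pi(1)},r_{\pi(2)}\}$. The ``cross pairs'' $\{\beta,\rho-\gamma\}$ and $\{\gamma,\rho-\beta\}$ produce $\beta-\gamma$ (respectively $\gamma-\beta$) at stage~$2$; since $\beta$ and $\gamma$ are orthogonal long roots of equal length, neither difference is a root, killing $8$ of the $24$ permutations. The pairs $\{\beta,\gamma\}$ and $\{\rho-\beta,\rho-\gamma\}$ yield $\beta+\gamma-\rho$ and $\rho-\beta-\gamma$, both roots by Lemma~\ref{laux2}, while the ``partner pairs'' $\{\beta,\rho-\beta\}$ and $\{\gamma,\rho-\gamma\}$ yield weight $0$, landing in $\h$. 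In each partner-pair case, two of the four orderings die at stage~$3$, e.g.\ $[x_{\rho-\gamma},h_{\rho-\beta}]=-\gen{\rho-\gamma,\rho-\beta}\,x_{\rho-\gamma}=0$ by Lemma~\ref{lrhominus}, and symmetrically $[x_\gamma,h_\beta]=0$ by hypothesis. This leaves exactly $12$ surviving iterated brackets.

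Each of the $12$ can then be computed explicitly using the $3$-long-root identity $c_{\delta_1,\delta_2}=c_{\delta_2,\delta_3}=c_{\delta_3,\delta_1}$ (when $\delta_1+\delta_2+\delta_3=0$), the Chevalley relation $c_{-\delta,-\epsilon}=-c_{\delta,\epsilon}$, the formula $[h_\delta,x_\epsilon]=\gen{\epsilon,\delta}\,x_\epsilon$, and the identity $c_{\beta,\gamma-\rho}\,c_{\gamma,\beta-\rho}\,c_{\beta,-\rho}\,c_{\gamma,-\rho}=1$ from Lemma~\ref{laux2} (equivalently $c_{\beta,\gamma-\rho}\,c_{\gamma,\beta-\rho}=c_{\beta,-\rho}\,c_{\gamma,-\rho}$, since each factor is $\pm1$). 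Writing $a=c_{\beta,-\rho}$ and $b=c_{\gamma,-\rho}$, each surviving term collapses to exactly $-ab\,x_\rho$, so the total is $-12ab\,x_\rho$. Dividing by $24$ yields the stated formula, and nonvanishing is automatic since $a,b\in\{\pm1\}$. The main obstacle is the bookkeeping over the $12$ subcases; the symmetries $\beta\leftrightarrow\gamma$ and $\{\beta,\gamma\}\leftrightarrow\{\rho-\beta,\rho-\gamma\}$, combined with the full symmetry of $q$, reduce the number of essentially distinct computations to a handful.
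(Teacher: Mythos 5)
Your proposal is correct and follows essentially the same route as the paper: both apply Lemma~\ref{lqsum}, sort the $24$ permutations by the unordered pair first applied to $x_{-\rho}$ (killing the $8$ cross-pair terms since $\beta-\gamma$ is not a root and half of the partner-pair terms by orthogonality of $\rho-\beta$ and $\rho-\gamma$), and reduce the $12$ survivors to $-c_{\beta,-\rho}c_{\gamma,-\rho}\,x_\rho$ each via the structure-constant identities, in particular Lemma~\ref{laux2}. The count, the key lemmas invoked, and the final arithmetic all match the paper's proof.
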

\begin{proof}
By Lemma~\ref{lqsum},
there are $24$ terms to consider.  We divide them into three classes.

Class $1$:  These are the terms in which the first two elements
applied to $x_{-\rho}$ are $x_\beta$ and $x_{\rho-\beta}$, in either order, or, likewise,
$x_\gamma$ and $x_{\rho-\gamma}$.  The result in $\g_0$ is thus in $\h$.
By Lemma~\ref{lrhominus}, since $\beta$ and $\gamma$ are orthogonal, so are $\rho-\beta$ and $\rho-\gamma$.
As a result, half the terms in this case are zero; e.g.,
$[x_{\rho-\beta},[x_\beta,x_{-\rho}]]$ is a multiple of $h_{\rho-\beta}$,
and $[x_{\rho-\gamma},h_{\rho-\beta}]=0$.  The other terms are
\begin{align*}
[x_{\rho-\gamma},[x_\gamma,[x_{\rho-\beta},[x_\beta,x_{-\rho}]]]] &=
  - c_{\rho-\gamma,\gamma} c_{\beta,-\rho} x_\rho,\\
[x_\gamma,[x_{\rho-\gamma},[x_\beta,[x_{\rho-\beta},x_{-\rho}]]]] &=
  - c_{\gamma,\rho-\gamma} c_{\rho-\beta,-\rho} x_\rho,\\
[x_{\rho-\beta},[x_\beta,[x_{\rho-\gamma},[x_\gamma,x_{-\rho}]]]] &=
  - c_{\rho-\beta,\beta} c_{\gamma,-\rho} x_\rho,\\
[x_\beta,[x_{\rho-\beta},[x_\gamma,[x_{\rho-\gamma},x_{-\rho}]]]] &=
  - c_{\beta,\rho-\beta} c_{\rho-\gamma,-\rho} x_\rho.
\end{align*}

We have
$c_{\gamma,-\rho} = c_{\rho-\gamma,\gamma} = 
- c_{\gamma,\rho-\gamma} = - c_{\rho-\gamma,-\rho}$,
and likewise with $\gamma$ replaced by $\beta$.  Thus each of these four terms
is equal to $-c_{\gamma,-\rho} c_{\beta,-\rho} x_\rho$.

Class 2:  Here the terms are those in which the first two elements
applied to $x_{-\rho}$ are $x_\beta$ and $x_{\rho-\gamma}$, in either order, or, likewise,
$x_\gamma$ and $x_{\rho-\beta}$.  Since $\beta-\gamma$ (resp.~$\gamma-\beta$) is not
a root, these terms are all zero.

Class 3:  The remaining terms are those in which the first two elements
applied to $x_{-\rho}$ are $x_\beta$ and $x_{\gamma}$, in either order, or, likewise,
$x_{\rho-\gamma}$ and $x_{\rho-\beta}$.  Since $\beta + \gamma - \rho$ is a root
by Lemma~\ref{laux2}, the
result in $\g_0$ is nonzero and not in~$\h$, so we compute each term by
simply accumulating the structure constants.  Four of the terms are
\begin{align*}
[x_{\rho-\gamma},[x_{\rho-\beta},[x_\gamma,[x_\beta,x_{-\rho}]]]] &=
  c_{\rho-\gamma,\gamma} c_{\rho-\beta,\beta+\gamma-\rho} c_{\gamma,\beta-\rho} c_{\beta,-\rho} x_\rho\\
  &= c_{\gamma,-\rho} c_{-\gamma,\rho-\beta} c_{\gamma,\beta-\rho} c_{\beta,-\rho} x_\rho\\
  &= -c_{\gamma,-\rho} c_{\beta,-\rho} x_\rho,\\
[x_\gamma,[x_\beta,[x_{\rho-\gamma},[x_{\rho-\beta},x_{-\rho}]]]] &=
  -c_{\gamma,-\rho} c_{\beta,-\rho} x_\rho,\\
[x_{\rho-\beta},[x_{\rho-\gamma},[x_\gamma,[x_\beta,x_{-\rho}]]]] &=
  -c_{\beta,\gamma-\rho} c_{\gamma,\beta-\rho},\\
[x_\beta,[x_\gamma,[x_{\rho-\gamma},[x_{\rho-\beta},x_{-\rho}]]]] &=
  -c_{\beta,\gamma-\rho} c_{\gamma,\beta-\rho};
\end{align*}
the remaining four are obtained by interchanging $\beta$ and $\gamma$.  This yields
four terms equal to $- c_{\beta,-\rho} c_{\gamma,-\rho} x_\rho$ and four equal to
$-c_{\beta,\gamma-\rho} c_{\gamma,\beta-\rho} x_\rho$.

Combining all the terms, we have
\[
q(x_\beta,x_\gamma,x_{\rho-\beta},x_{\rho-\gamma}) =
  -\frac13 c_{\beta,-\rho} c_{\gamma,-\rho}
   - \frac16 c_{\beta,\gamma-\rho} c_{\gamma,\beta-\rho};
\]
but it follows from Lemma~\ref{laux2} that the two products of
structure constants are equal.  Thus we have
\[
q(x_\beta,x_\gamma,x_{\rho-\beta},x_{\rho-\gamma}) = -\frac12 c_{\beta,-\rho} c_{\gamma,-\rho}.
\]
In particular, it is not zero.
\end{proof}

\section{Strictly regular elements}\label{secsr}

For any fixed $x,y,z\in\g_1$, the expression $q(w,x,y,z)$ with $w\in\g_1$
is a linear function of $w$.
Since the skew-symmetric bilinear form $\gen{-,-}$ is nondegenerate
(Lemma~\ref{lnondeg}), we may define the
{\it triple product} of $x,y,z$ to be the unique element $xyz$ of $\g_1$
such that $q(w,x,y,z) = \gen{w,xyz}$ for all $w\in\g_1$.

Following Ferrar (\cite{ferrar}, \S3), we call a nonzero element $x\in\g_1$
{\it strictly regular} if $xxy\in Fx$ for all $y\in\g_1$.  In this section
we will give several equivalent characterizations of strictly regular elements.

\begin{lem}\label{lalphasr}
The basis element $x_\alpha$ is strictly regular.
\end{lem}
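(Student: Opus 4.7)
The plan is to reduce the claim to showing that $q(x_\delta, x_\alpha, x_\alpha, y) = 0$ for every $y \in \g_1$ and every root $\delta$ of $\alpha$-height~$1$ with $\delta \neq \rho - \alpha$, then combine Corollary~\ref{ctworho} with a short pairing estimate in the root system. This route avoids any direct computation with structure constants.

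By the definition of the triple product, $x_\alpha x_\alpha y$ is the unique element of $\g_1$ satisfying $\gen{w, x_\alpha x_\alpha y} = q(w, x_\alpha, x_\alpha, y)$ for all $w \in \g_1$. Expand $x_\alpha x_\alpha y = \sum_\epsilon a_\epsilon x_\epsilon$ in the basis of root vectors for $\g_1$. As observed in the proof of Lemma~\ref{lnondeg}, the skew form entry $\gen{x_\delta, x_\epsilon}$ is nonzero precisely when $\epsilon = \rho - \delta$, and is then a nonzero scalar. Pairing with each basis element $x_\delta$ therefore yields $a_{\rho - \delta} \gen{x_\delta, x_{\rho - \delta}} = q(x_\delta, x_\alpha, x_\alpha, y)$, so $x_\alpha x_\alpha y \in F x_\alpha$ is equivalent to $q(x_\delta, x_\alpha, x_\alpha, y) = 0$ whenever $\delta \neq \rho - \alpha$. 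By linearity in $y$ it suffices to take $y = x_\gamma$ for each root $\gamma$ of $\alpha$-height~$1$, and Corollary~\ref{ctworho} then reduces the question to pairs $(\delta, \gamma)$ satisfying $\delta + \gamma = 2(\rho - \alpha)$.

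The crucial step is to show this sum relation is incompatible with $\delta \neq \rho - \alpha$. Taking the root-system pairing $\gen{-, \rho - \alpha}$ of both sides of $\delta + \gamma = 2(\rho - \alpha)$ gives $\gen{\delta, \rho - \alpha} + \gen{\gamma, \rho - \alpha} = 2 \gen{\rho - \alpha, \rho - \alpha} = 4$. By Lemma~\ref{lrhominus}, $\rho - \alpha$ is a long root of $\alpha$-height~$1$; since $\g$ is not of type $G_2$ (excluded by $\rk \g \geq 4$), the Cartan integer $\gen{\beta, \rho - \alpha}$ for any root $\beta$ lies in $\{-2, -1, 0, 1, 2\}$, and attains the value $2$ only when $\beta = \rho - \alpha$. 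Each summand is therefore at most $2$, so their sum being $4$ forces $\delta = \gamma = \rho - \alpha$, contradicting $\delta \neq \rho - \alpha$. It follows that $q(x_\delta, x_\alpha, x_\alpha, x_\gamma) = 0$ for every such $\delta$ and every $\gamma$, and hence $x_\alpha x_\alpha y \in F x_\alpha$.

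I expect the main obstacle to be spotting this pairing shortcut. A more naive approach would try to evaluate $q(x_\delta, x_\alpha, x_\alpha, x_\gamma)$ directly from Lemma~\ref{lqsum}, requiring bookkeeping over many orderings of nested adjoint actions together with a delicate case analysis on structure constants; the pairing argument bypasses all of this by showing that no nontrivial configuration of roots arises in the relevant sum.
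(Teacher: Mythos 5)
Your proof is correct, and its overall skeleton matches the paper's: both reduce, via Corollary~\ref{ctworho} and the monomial structure of the skew form from Lemma~\ref{lnondeg}, to showing that two roots $\delta,\gamma$ of $\alpha$-height~$1$ with $\delta+\gamma=2(\rho-\alpha)$ must both equal $\rho-\alpha$. Where you diverge is in that last combinatorial step: the paper applies the ordinary height function $\rootht$, noting that $\rootht(\delta+\gamma)=2\rootht\rho-2$ forces both roots to have height $\rootht\rho-1$, and that $\rho-\alpha$ is the unique root of that height because $\alpha$ is the only simple root not orthogonal to $\rho$; you instead pair the relation with $\rho-\alpha$ and use the bound $\gen{\beta,\rho-\alpha}\le 2$ with equality only for $\beta=\rho-\alpha$. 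Both arguments are equally short; yours trades the ``unique highest root'' fact for the Cartan-integer bound, and correctly secures that bound by invoking Lemma~\ref{lrhominus} to see that $\rho-\alpha$ is long (which is what really rules out the value $2$ for $\beta\ne\rho-\alpha$, more so than the exclusion of $G_2$). Either route is fine; the paper's has the mild advantage of not needing the length of $\rho-\alpha$ at all.
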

\begin{proof}
Let $\beta,\gamma$ be roots of $\alpha$-height~$1$.  By Corollary~\ref{ctworho}, if
$\gen{x_\gamma,x_\alpha x_\alpha x_\beta} = q(x_\gamma,x_\alpha,x_\alpha,x_\beta)$
is nonzero, then $2\alpha + \beta + \gamma = 2\rho$.
Since the simple root $\alpha$ has height~$1$, this implies
$\rootht(\beta + \gamma) = 2\rootht\rho - 2$.  As $\rho$ is the unique
highest root, $\beta$ and $\gamma$ have smaller heights than $\rho$, so this
can only occur if both have height $\rootht \rho - 1$.
Since the only simple root not orthogonal to $\rho$ is $\alpha$, the only root of
that height is $\rho-\alpha$, and $\gen{x_\gamma, x_\alpha x_\alpha x_\beta}$ is
therefore zero unless $\beta=\gamma=\rho-\alpha$.
The orthogonal complement of any $x_\alpha x_\alpha y$ thus includes the
space generated by all the $x_\gamma$, $\gamma\ne\rho-\alpha$.  Since this
is the orthogonal complement of $x_\alpha$, we have $x_\alpha x_\alpha y \in F x_\alpha$.
\end{proof}

\begin{cor}\label{cbetasr}
For any long root $\beta$ of $\alpha$-height~$1$, $x_\beta$ is strictly regular.
\end{cor}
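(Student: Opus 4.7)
The plan is to transfer the strict regularity of $x_\alpha$ (Lemma~\ref{lalphasr}) to $x_\beta$ by a symmetry argument based on the orbit structure from Section~\ref{secprelim}. Both $x_\alpha$ and $x_\beta$ are representatives of orbit~$1$ under the action of $\go$ on $\g_1$ over $\overline{F}$, since orbit~$1$ is by definition the orbit of $x_{\beta_1}$ for any long root $\beta_1$ of $\alpha$-height~$1$. Thus there exist $g\in\go(\overline{F})$ and $c\in\overline{F}^\times$ with $g\cdot x_\alpha=c\,x_\beta$.

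To exploit this, I would first verify that every $g\in\go$ scales the triple product by a nonzero scalar. Since $\go=Z_G(h_\rho)$ centralizes $h_\rho$, it preserves the grading on $\g$ and acts on each of the one-dimensional spaces $\g_{\pm2}=F\,x_{\pm\rho}$ by a character. The forms $\gen{-,-}$ and $q$ were defined purely from the Lie bracket and the choice of $x_{\pm\rho}$, so $g$ scales both forms, and hence the triple product they determine, by nonzero scalars.

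Next I would observe that strict regularity is preserved both under scalar multiplication (trivially, since $(cx)(cx)y=c^2\,xxy$) and under any invertible $g\in\GL(\g_1)$ that scales the triple product by some $\lambda\ne0$: if $xxy\in Fx$ for all $y\in\g_1$, then for arbitrary $y'=gy$ we have $(gx)(gx)y'=\lambda\,g(xxy)\in F\cdot g(x)$, so $gx$ is strictly regular. Applying this with the $g$ chosen above, $c\,x_\beta=g\cdot x_\alpha$ is strictly regular, hence so is $x_\beta$.

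The only subtlety is the passage between $F$ and $\overline{F}$. Strict regularity is an $F$-linear condition: for a nonzero $x\in\g_1$, the set of $y\in\g_1$ with $xxy\in Fx$ is an $F$-subspace, and the condition is equivalent to its base-changed version, since $xxy\in\g_1$ and $\g_1\cap\overline{F}\,x=F\,x$. So the conclusion over $\overline{F}$ descends to $F$, and no serious obstacle arises; the only outside ingredient is the orbit claim itself, which is immediate from the summary in Section~\ref{secprelim}.
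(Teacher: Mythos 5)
Your proof is correct and is essentially the paper's own argument: transfer strict regularity from $x_\alpha$ along its $\go$-orbit, using that $\go$ preserves the grading and hence scales the two forms (and so the triple product) by nonzero scalars, which preserves strict regularity. The one point to tighten is that the fact that \emph{every} $x_\beta$ with $\beta$ a long root of $\alpha$-height~$1$ lies in the same $\go$-orbit as $x_\alpha$ is not quite ``immediate from the summary'' in Section~\ref{secprelim} (which only guarantees that orbit~$1$ has \emph{some} representative of that form); the paper cites Lemma~2.1 of R\"ohrle's paper for precisely this statement, and you should do the same.
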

\begin{proof}
Since the property of being strictly regular depends only on the triple
product, which is in turn defined in terms of the quartic and bilinear forms,
it is preserved by the action of $\goss$ by Lemma~\ref{lgoss}.  It
is also preserved by scaling, so it is preserved by the action of $\go$.
By Lemma~$2.1$ in \cite{rohrle}, all the elements $x_\beta$ with $\beta$ a long root
of $\alpha$-height~$1$ are in the same $\go$-orbit, so they are are all strictly
regular since $x_\alpha$ is.
\end{proof}

\begin{lem}\label{lxxy}
Let $x\in\g_1$ be such that $xxy=0$ for all $y\in\g_1$; then $x=0$.
\end{lem}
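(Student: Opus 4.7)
The plan is to argue contrapositively: assuming $x\ne 0$, I will exhibit a $y\in\g_1$ with $xxy\ne 0$. The property ``$xxy=0$ for all $y\in\g_1$'' is preserved by the $G_0$-action on $\g_1$: Lemma~\ref{lgoss} shows $\goss$ preserves the triple product, and the remaining torus factor rescales $q$ and $\gen{-,-}$ by characters of $G_0$, which does not affect the vanishing of $xxy$. Since the statement is algebraic I pass to the algebraic closure of $F$, and by the orbit description recalled in Section~\ref{secprelim} I may replace $x$ by an orbit representative of the form $\sum_{i=1}^k x_{\beta_i}$ with $k\ge 1$, where $\beta_1,\ldots,\beta_k$ are mutually orthogonal roots of $\alpha$-height~$1$ and (as noted there) $\beta_1=\alpha$.

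The key computation will be to show that $q(x_{\rho-\alpha},x,x,x_{\rho-\alpha})\ne 0$, for then $\gen{x_{\rho-\alpha},xxx_{\rho-\alpha}}\ne 0$ by definition of the triple product, whence $xxx_{\rho-\alpha}\ne 0$, contradicting the hypothesis. Expanding bilinearly,
\[
q(x_{\rho-\alpha},x,x,x_{\rho-\alpha})=\sum_{i,j=1}^k q(x_{\rho-\alpha},x_{\beta_i},x_{\beta_j},x_{\rho-\alpha}),
\]
and Corollary~\ref{ctworho} kills the $(i,j)$ summand unless $\beta_i+\beta_j=2\alpha$.

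A short length comparison then isolates a single surviving term. For $i\ne j$ the roots $\beta_i$ and $\beta_j$ are distinct and orthogonal, so $|\beta_i+\beta_j|^2=|\beta_i|^2+|\beta_j|^2\le 2|\alpha|^2<4|\alpha|^2=|2\alpha|^2$, using that $\alpha$ is long and no root exceeds the length of the highest root. For $i=j$ the equation $2\beta_i=2\alpha$ forces $\beta_i=\alpha$, which by distinctness of the $\beta_i$ pins down $i=1$. The sole surviving summand is $q(x_\alpha,x_\alpha,x_{\rho-\alpha},x_{\rho-\alpha})$, which equals $1$ by Lemma~\ref{lcase1}.

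The only step requiring genuine thought is this length comparison; everything else is direct invocation of earlier results. Its validity across all admissible types ($B$, $D$, $E$, and $F$) rests only on $\alpha$ being a long root, which is built into our standing hypotheses, so no type-by-type case analysis is required.
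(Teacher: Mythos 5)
Your proof is correct, but it takes a genuinely different route from the paper's. The paper observes that the set $\{x : xx\g_1=\{0\}\}$ is a Zariski-closed union of $G_0$-orbits, so by the Borel--Tits closure relation recalled in Section~\ref{secprelim} (the closure of any orbit contains all smaller orbits) it suffices to check the single minimal nonzero orbit, where the representative $x_\alpha$ gives $q(x_\alpha,x_\alpha,x_{\rho-\alpha},x_{\rho-\alpha})=1$ by Lemma~\ref{lcase1}. You instead check every nonzero orbit directly, and use a support argument (Corollary~\ref{ctworho} plus the length comparison) to show that the only surviving summand of $q(x_{\rho-\alpha},x,x,x_{\rho-\alpha})$ is that same quantity $q(x_\alpha,x_\alpha,x_{\rho-\alpha},x_{\rho-\alpha})=1$. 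Both arguments hinge on the identical key computation from Lemma~\ref{lcase1}; yours trades the geometric degeneration argument for an exhaustive but elementary check, and your length comparison is valid in all admissible types since $\alpha$ is long by hypothesis. The one point to watch: your argument genuinely requires that \emph{every} nonzero orbit admit a representative $\sum_{i=1}^k x_{\beta_i}$ with some $\beta_i$ equal to $\alpha$ --- otherwise your double sum collapses to $0$ and proves nothing. Section~\ref{secprelim} asserts the normalization $\beta_1=\alpha$ explicitly only in the five-orbit discussion for $E_6$, $E_7$, $E_8$; for types $B$, $D$ and $F_4$, where the level-$2$ orbits split, the claim is true but not spelled out there, so you are leaning on a slightly stronger form of the orbit description than the paper explicitly states. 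The paper's closure argument sidesteps this entirely, since only the minimal nonzero orbit ever needs to be examined.
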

\begin{proof}
The set of all $x$ such that $x x \g_1=\{0\}$ is invariant under the action
of $\go$ on $\g_1$, so it is a union of $\go$-orbits; it is also closed
(in the Zariski topology).
Thus it suffices to show that $x x \g_1\ne\{0\}$ for a representative $x$ of the
smallest nonzero orbit (i.e., orbit~$1$); this follows if there are
$y,z\in\g_1$ such that $q(x,x,y,z)\ne 0$.  A representative of the smallest
nonzero orbit is $x=x_\alpha$; we let $y=z=x_{\rho-\alpha}$.
By \eqref{eqcomp1}, we have $q(x,x,y,z) = 1$.
\end{proof}

\begin{lem}\label{lorthotworho}
If $\beta_1,\beta_2,\beta_3,\beta_4$ are mutually orthogonal roots of $\alpha$-height~$1$,
then $\beta_1+\beta_2+\beta_3+\beta_4=2\rho$.
\end{lem}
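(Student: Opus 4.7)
The plan is to compute the squared norm of the vector $t = 2\rho - \beta_1 - \beta_2 - \beta_3 - \beta_4 \in \h^\vee$, show that it is non-positive, and conclude that $t = 0$ by positive definiteness of the bilinear form on the real span of the roots. The desired equality then follows immediately.

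First, I would expand $(t, t)$ and use the mutual orthogonality $(\beta_i, \beta_j) = 0$ for $i \ne j$ to kill the cross terms, obtaining
\[
(t, t) = 4(\rho, \rho) - 4 \sum_{i=1}^4 (\beta_i, \rho) + \sum_{i=1}^4 (\beta_i, \beta_i).
\]
Next, I would translate the $\alpha$-height hypothesis into an inner product statement: $\gen{\beta_i, \rho} = 1$ is exactly $(\beta_i, \rho) = (\rho, \rho)/2$, so the middle sum contributes $-8(\rho, \rho)$ and the expression reduces to
\[
(t, t) = -4(\rho, \rho) + \sum_{i=1}^4 (\beta_i, \beta_i).
\]
Finally, because $\rho$ is a long root of the simple root system $\Psi$, every root $\beta$ satisfies $(\beta, \beta) \leq (\rho, \rho)$, so $\sum_{i=1}^4 (\beta_i, \beta_i) \leq 4(\rho, \rho)$ and hence $(t, t) \leq 0$.

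Combining this upper bound with the positive definiteness of $(\cdot, \cdot)$ on $\h^\vee$ forces $(t, t) = 0$ and therefore $t = 0$, which is precisely the statement $\beta_1 + \beta_2 + \beta_3 + \beta_4 = 2\rho$. I do not expect any real obstacle here: the three ingredients (orthogonality of the $\beta_i$, the value of $(\beta_i, \rho)$ dictated by the $\alpha$-height, and the maximality of $(\rho, \rho)$ among root lengths) combine cleanly. As a byproduct, equality in $\sum (\beta_i, \beta_i) \leq 4(\rho, \rho)$ also forces each $\beta_i$ to be a long root, a fact that may be convenient later.
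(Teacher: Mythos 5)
Your argument is correct, but it takes a different route from the paper's. The paper proves the lemma by a purely combinatorial descent inside the root system: since $\gen{\rho-\beta_1,\beta_2}=\gen{\rho,\beta_2}=1$ by orthogonality, the difference $\rho-\beta_1-\beta_2$ is again a root, and iterating shows $\rho-\beta_1-\beta_2-\beta_3-\beta_4$ is a root of $\alpha$-height $-2$, hence equals $-\rho$. That argument has the side benefit of showing each partial sum $\rho-\beta_1-\cdots-\beta_k$ is a root, a fact the paper needs elsewhere (e.g.\ in Lemma~\ref{laux2}). Your metric argument --- bounding $(t,t)$ for $t=2\rho-\sum\beta_i$ and invoking positive definiteness on the real span of the roots --- is equally valid: the inner product computation is right, $\gen{\beta_i,\rho}=1$ does give $(\beta_i,\rho)=(\rho,\rho)/2$, and $(\beta,\beta)\le(\rho,\rho)$ holds for every root since the highest root is long. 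What is pleasant about your version is that the equality case simultaneously forces $(\beta_i,\beta_i)=(\rho,\rho)$ for each $i$, so you get Lemma~\ref{lortholong} for free; in fact the paper's proof of that follow-up lemma is exactly your norm computation run in the other direction, after the sum is already known to be $2\rho$. So your single calculation effectively merges the two lemmas.
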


\noindent This is Corollary 1.4 in \cite{rohrle}.

\begin{proof}
Since $\beta_1$ has $\alpha$-height~$1$, $\rho-\beta_1$ is a root.  Since $\beta_2$ is orthogonal
to $\beta_1$, $\gen{\rho-\beta_1,\beta_2} = \gen{\rho,\beta_2}-\gen{\beta_1,\beta_2} =
\gen{\rho,\beta_2}=1$,
so $\rho-\beta_1-\beta_2$ is also a root.  Continuing in this fashion, we find that
$\rho-\beta_1-\beta_2-\beta_3-\beta_4$ is a root; since it has $\alpha$-height~$-2$, it
must be $-\rho$.
\end{proof}

\begin{lem}\label{lortholong}
If four roots of $\alpha$-height~$1$ are mutually orthogonal, then they are all
long roots.
\end{lem}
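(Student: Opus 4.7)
The plan is to extract a length-squared identity from Lemma~\ref{lorthotworho} and then exploit the fact that the highest root is long. Given four mutually orthogonal roots $\beta_1,\beta_2,\beta_3,\beta_4$ of $\alpha$-height~$1$, Lemma~\ref{lorthotworho} gives $\beta_1+\beta_2+\beta_3+\beta_4=2\rho$. Taking the inner product of each side with itself and invoking mutual orthogonality collapses the cross-terms, so
\[
\sum_{i=1}^4 (\beta_i,\beta_i) \;=\; (2\rho,2\rho) \;=\; 4(\rho,\rho).
\]

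Next I would use that $\rho$ is always long: in any irreducible root system the highest root is a long root, and no root is strictly longer than $\rho$. Writing $\ell$ for the squared length of a long root, we have $(\rho,\rho)=\ell$ and $(\beta_i,\beta_i)\le \ell$ for each $i$, with equality precisely when $\beta_i$ is long. Summing the four inequalities,
\[
\sum_{i=1}^4 (\beta_i,\beta_i) \;\le\; 4\ell \;=\; (2\rho,2\rho),
\]
with equality exactly when every $\beta_i$ is long. The displayed identity forces equality, so each $\beta_i$ must be long.

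There is essentially no obstacle: once Lemma~\ref{lorthotworho} is invoked, the argument is a one-line squared-length computation together with the standard fact that the highest root realizes the maximal root length. The statement is vacuous for the simply-laced types $D$ and $E$; the real content is for the types $B_n$ and $F_4$ in which short roots exist, and it is precisely the short-root contribution to $\sum (\beta_i,\beta_i)$ that would violate the identity.
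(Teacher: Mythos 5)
Your proof is correct and is essentially identical to the paper's: both reduce to $\sum_i(\beta_i,\beta_i)=(2\rho,2\rho)=4(\rho,\rho)$ via Lemma~\ref{lorthotworho} and orthogonality, then use that the highest root is long to force equality in $(\beta_i,\beta_i)\le(\rho,\rho)$. No differences worth noting.
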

\begin{proof}
Call the roots $\beta_1,\beta_2,\beta_3,\beta_4$.  By Lemma~\ref{lorthotworho},
$\beta_1+\beta_2+\beta_3+\beta_4=2\rho$; since the roots are mutually
orthogonal we then have
\begin{align*}
  4(\rho,\rho) &= (2\rho,2\rho)\\
	       &= (\beta_1+\beta_2+\beta_3+\beta_4,\beta_1+\beta_2+\beta_3+\beta_4)\\
               &= (\beta_1,\beta_1)+(\beta_2,\beta_2)+(\beta_3,\beta_3)+(\beta_4,\beta_4).
\end{align*}
Since $\rho$ is long, $(\beta_i,\beta_i)\le(\rho,\rho)$ for each $i$, $1\le i\le 4$; thus we must
have $(\beta_i,\beta_i)=(\rho,\rho)$ for each $i$.
\end{proof}

\begin{lem}\label{labcd}
Let $\alpha,\beta,\gamma,\delta$ be mutually orthogonal roots of
$\alpha$-height~$1$; then $q(x_\alpha,x_\beta,x_\gamma,x_\delta)\ne 0$.
\end{lem}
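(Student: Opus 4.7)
My plan is to leverage the orbit structure on $\g_1$ together with the value $q(x_\alpha+x_{\rho-\alpha})=6$ already computed after Lemma~\ref{lcase1}, rather than carrying out the direct sum of $24$ iterated brackets. By Lemmas~\ref{lortholong} and~\ref{lorthotworho}, the four roots $\alpha,\beta,\gamma,\delta$ are all long and sum to $2\rho$. By the R\"ohrle orbit description recalled in Section~\ref{secprelim}, both $x_\alpha+x_\beta+x_\gamma+x_\delta$ and $x_\alpha+x_{\rho-\alpha}$ represent orbit~$4$, so they lie in a common $G_0$-orbit over the algebraic closure.

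The first key step is to establish that $G_0$ scales $q$ by a nonzero factor. Since every $g\in G_0$ commutes with $h_\rho$, it preserves each graded piece $\g_k$; in particular it acts on the one-dimensional spaces $\g_{\pm 2}$ by nonzero scalars $\lambda(g),\mu(g)$. From $\ad(gv)=g\,(\ad v)\,g^{-1}$ one then reads off
\[
q(gv)\,x_\rho \;=\; g\,(\ad v)^4\,g^{-1}(x_{-\rho}) \;=\; \lambda(g)^{-1}\mu(g)\,q(v)\,x_\rho,
\]
so $q(gv)=\lambda(g)^{-1}\mu(g)\,q(v)$; in particular the condition $q(v)\ne 0$ is $G_0$-invariant. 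Combined with $q(x_\alpha+x_{\rho-\alpha})=6\ne 0$, this forces $q(x_\alpha+x_\beta+x_\gamma+x_\delta)\ne 0$.

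To deduce the lemma I would expand this value by multilinearity and use Corollary~\ref{ctworho} to retain only those terms $q(x_{\mu_1},x_{\mu_2},x_{\mu_3},x_{\mu_4})$, with each $\mu_i\in\{\alpha,\beta,\gamma,\delta\}$, for which $\mu_1+\mu_2+\mu_3+\mu_4=2\rho$. Taking the inner product of this equation with each of $\alpha,\beta,\gamma,\delta$ in turn, and using that the four roots are mutually orthogonal and all long (hence share the length of $\rho$), forces each root to appear with multiplicity exactly one. Thus only the $4!$ permutations of $(\alpha,\beta,\gamma,\delta)$ contribute, giving
\[
q(x_\alpha+x_\beta+x_\gamma+x_\delta)=24\,q(x_\alpha,x_\beta,x_\gamma,x_\delta)\ne 0.
\]

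The main obstacle I anticipate is establishing the $G_0$-semi-invariance of $q$: Lemma~\ref{lgoss} handles only $\goss$, and the argument above uses crucially that $\g_{\pm 2}$ is one-dimensional. An alternative, more combinatorial approach would mimic Lemma~\ref{lcase2}, summing the $24$ iterated-bracket terms with structure constants tracked via Lemma~\ref{laux2}, but that route seems considerably more cumbersome than the orbit argument.
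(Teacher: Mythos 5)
Your proposal is correct, and its skeleton is the same as the paper's: show that $q(x_\alpha+x_\beta+x_\gamma+x_\delta)\ne 0$ because this element represents the dense orbit, then observe that the multilinear expansion collapses to $24\,q(x_\alpha,x_\beta,x_\gamma,x_\delta)$. Both sub-steps, however, are executed differently, and in each case your version is the more explicit or more uniform one. For the nonvanishing on the dense orbit, the paper simply asserts it from ``$q$ is not identically zero,'' tacitly using that the zero locus of $q$ is closed and $G_0$-stable; your computation of the semi-invariance $q(gv)=\lambda(g)\mu(g)^{-1}q(v)$ for $g\in\go$ (you wrote the inverse on the other factor, but since the point is only that the factor is a nonzero scalar, this is immaterial) supplies exactly the justification the paper leaves implicit, and anchoring it to the concrete value $q(x_\alpha+x_{\rho-\alpha})=6$ via R\"ohrle's two representatives of orbit~$4$ is a clean way to finish that step. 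For the vanishing of the degenerate terms, the paper runs a five-way case analysis on the multiplicity pattern of the arguments and invokes strict regularity of $x_\beta$ (Corollary~\ref{cbetasr}) to kill the ``three equal arguments'' case; your uniform argument --- pair $\sum_i n_i\mu_i = 2\rho = \alpha+\beta+\gamma+\delta$ with each of the four mutually orthogonal roots to force every multiplicity $n_i$ to equal $1$ --- disposes of all degenerate cases at once using only Corollary~\ref{ctworho} and Lemma~\ref{lorthotworho}, which is a genuine simplification. Both routes conclude by noting that $24\ne 0$ in $F$ since $\charF F\ne 2,3$.
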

\begin{proof}
Since $x_\alpha+x_\beta+x_\gamma+x_\delta$ is a representative of the
dense orbit and $q$ is not identically zero,
$q(x_\alpha+x_\beta+x_\gamma+x_\delta)\ne 0$.
Expanding the corresponding $4$-linear form, we obtain five kinds of terms:
\begin{itemize}
\item Those with four equal arguments, e.g., $q(x_\beta, x_\beta, x_\beta, x_\beta)$.
Since we cannot have $4\beta=2\rho$, this expression
is zero by Lemma~\ref{lqsum}.
\item Those with exactly three equal arguments, e.g., $q(x_\beta, x_\beta, x_\beta, x_\gamma)$.
The mutually orthogonal roots $\alpha,\beta,\gamma,\delta$ are long
by Lemma~\ref{lortholong}.  Thus $x_\beta$ is strictly
regular (Corollary~\ref{cbetasr}), so the $4$-linear form here is
$\gen{x_\gamma,x_\beta x_\beta x_\beta} = \lambda\gen{x_\gamma,x_\beta}$
for some $\lambda\in F$; but $\gen{x_\gamma,x_\beta}=0$
because $\gamma+\beta$ is not a root.  Thus these terms are also zero.
\item Those with two pairs of equal arguments, e.g., $q(x_\beta, x_\beta, x_\gamma, x_\gamma)$.
Since $\beta+\gamma$ is not a root, it is not $\rho$.  Thus
$2\beta+2\gamma\ne 2\rho$, so this expression is zero.
\item Those with exactly two equal arguments, e.g., $q(x_\beta, x_\beta, x_\gamma, x_\delta)$.
By Lemma~\ref{lorthotworho}, $\alpha+\beta+\gamma+\delta=2\rho$; thus
$2\beta+\gamma+\delta\ne 2\rho$, so these terms are zero.
\item Those with four unequal arguments, e.g., $q(x_\alpha, x_\beta, x_\gamma, x_\delta)$,
which by elimination must be nonzero.\qedhere
\end{itemize}
\end{proof}

\begin{prop}\label{porbit1}
The strictly regular elements of $\g_1$ are those contained in the smallest nonzero
orbit.
\end{prop}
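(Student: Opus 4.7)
The plan is to establish both inclusions. For the forward direction, Corollary \ref{cbetasr} shows $x_\alpha$ is strictly regular; since the triple product is determined by the quartic and bilinear forms (both $\goss$-invariant by Lemma \ref{lgoss}) and the property is trivially preserved by scaling, strict regularity is invariant under the full $G_0$-action, so every orbit-$1$ element is strictly regular.

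For the reverse direction, I would first reformulate the condition algebraically. If $xxy=L(y)x$ for a linear functional $L$ on $\g_1$, the chain
\[
L(y)\gen{z,x}=\gen{z,xxy}=q(z,x,x,y)=q(y,x,x,z)=\gen{y,xxz}=L(z)\gen{y,x}
\]
combined with Lemma \ref{lnondeg} (which ensures $\gen{-,x}\ne 0$ when $x\ne 0$) forces $L=\kappa\gen{-,x}$ for some $\kappa\in F$. Thus strict regularity is equivalent to $q(z,x,x,y)=\kappa\gen{z,x}\gen{y,x}$ for all $y,z\in\g_1$. Setting $y=z=x$ yields $q(x)=0$; since the zero locus of $q$ is $G_0$-invariant, Lemma \ref{labcd} rules out orbit $4$, leaving orbits $1$, $2$, $3$ (and sub-orbits) as candidates.

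To exclude orbits $2$ and $3$, I would take a representative $x=x_{\beta_1}+\cdots+x_{\beta_k}$ with $k\in\{2,3\}$ and mutually orthogonal long height-$1$ roots $\beta_i$ (possible for $k=3$ by truncating an orbit-$4$ representative, which is all-long by Lemma \ref{lortholong}, and for the level-$2$ orbits in the simply-laced types $D$ and $E$). Applying Corollary \ref{ctworho} to prune the expansion, $q(x_{\rho-\beta_1},x,x,x_{\rho-\beta_1})$ collapses to a single surviving term equal to $1$ by Lemma \ref{lcase1}; since $\gen{x_{\rho-\beta_1},x}=c_{\rho-\beta_1,\beta_1}=\pm 1$, strict regularity forces $\kappa=1$. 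The same pruning gives $q(x_{\rho-\beta_1},x,x,x_{\rho-\beta_2})=-c_{\beta_1,-\rho}c_{\beta_2,-\rho}$ via Lemma \ref{lcase2}, while strict regularity with $\kappa=1$ demands the value $+c_{\beta_1,-\rho}c_{\beta_2,-\rho}$ (using the identity $c_{\rho-\beta_i,\beta_i}=c_{\beta_i,-\rho}$, valid because $\beta_i,-\rho,\rho-\beta_i$ are long roots summing to zero). Since each structure constant is $\pm 1$ and $\charF\ne 2$, the opposing signs yield a contradiction.

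The main obstacle is handling the level-$2$ sub-orbits in types $B_n$ and $F_4$ whose representatives necessarily involve a short root, since Lemmas \ref{lcase1} and \ref{lcase2} are stated for long roots. For these one would compute the analogous values of $q$ on short-root vectors directly, in the spirit of Section \ref{secforms}, or find an alternative long-root representative of the orbit to which the argument above applies verbatim.
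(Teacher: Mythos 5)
Your proof is correct, but for the harder inclusion (that nothing at level $\ge 2$ is strictly regular) it takes a genuinely different route from the paper's. You reformulate strict regularity as the identity $q(z,x,x,y)=\kappa\gen{z,x}\gen{y,x}$ (in effect anticipating Lemma~\ref{leq5}), pin down $\kappa=1$ from the diagonal value supplied by Lemma~\ref{lcase1}, and then extract a sign contradiction from the off-diagonal value in Lemma~\ref{lcase2}; orbit~$4$ you dispose of separately via $q(x)=0$. The paper instead produces a single witness: for $x=x_{\beta_1}+x_{\beta_2}$ it computes $\gen{x_{\beta_4},xxx_{\beta_3}}=2q(x_{\beta_1},x_{\beta_2},x_{\beta_3},x_{\beta_4})\ne0$ by Lemma~\ref{labcd} while $\gen{x,x_{\beta_4}}=0$, so $xxx_{\beta_3}\notin Fx$; since the strictly regular elements together with $0$ form a closed union of orbits, checking level-$2$ representatives suffices. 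The paper's version needs only the nonvanishing statement of Lemma~\ref{labcd} and no sign bookkeeping; yours avoids the closure argument by treating orbits $2$, $3$, $4$ head-on, at the price of the explicit values from Lemmas~\ref{lcase1} and~\ref{lcase2}. Two small remarks: the obstacle you flag at the end does not actually arise, because by R\"ohrle's Theorem~4.8 as quoted in Section~\ref{secprelim} every level-$2$ orbit---in types $B_n$ and $F_4$ included---has a representative that is a pair drawn from four mutually orthogonal roots of $\alpha$-height~$1$, and such roots are automatically long by Lemma~\ref{lortholong}; and to rule out orbit~$4$ it is cleaner to cite $q(x_\alpha+x_{\rho-\alpha})=6$ from the remark after Lemma~\ref{lcase1}, since Lemma~\ref{labcd} as stated concerns the $4$-linear form on distinct root vectors rather than $q$ of an orbit-$4$ element (though expanding $q(\sum_i x_{\beta_i})$ closes that gap).
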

\begin{proof}
The set of strictly regular elements is a union of orbits; its union with $0$ is a
closed set.  Since $x_\alpha$ is a representative of the smallest nonzero orbit
and is strictly regular by Lemma~\ref{lalphasr}, all elements
of the smallest nonzero orbit are also strictly regular.  It thus suffices to show
that representatives of level~$2$ orbits are not strictly regular.
Let $\alpha,\beta,\gamma,\delta$ be four mutually orthogonal roots of
$\alpha$-height~$1$.  We take $x_\alpha + x_\beta$ as a representative of a level~$2$ orbit.

We compute
\begin{align*}
\gen{x_\delta,(x_\alpha + x_\beta)(x_\alpha + x_\beta)x_\gamma} &=
  q(x_\alpha + x_\beta,x_\alpha + x_\beta,x_\gamma,x_\delta)\\
  &= q(x_\alpha,x_\alpha,x_\gamma,x_\delta)
    +2q(x_\alpha,x_\beta,x_\gamma,x_\delta)\\
  &\quad  +q(x_\beta,x_\beta,x_\gamma,x_\delta)\\
  &=2q(x_\alpha,x_\beta,x_\gamma,x_\delta),
\end{align*}
the other terms being zero since $\alpha+\alpha+\gamma+\delta$ and
$\beta+\beta+\gamma+\delta$ cannot equal $2\rho$ since
$\alpha+\beta+\gamma+\delta=2\rho$ by Lemma~\ref{lorthotworho}.
By Lemma~\ref{labcd}, the result is nonzero, so in particular the triple product
$(x_\alpha + x_\beta)(x_\alpha + x_\beta)x_\gamma$ is
not orthogonal to $x_\delta$.
However, $\gen{x_\alpha+x_\beta, x_\delta} = 0$ since neither
$\alpha+\delta$ nor $\beta+\delta$ is a root.  Hence the triple
product $(x_\alpha + x_\beta)(x_\alpha + x_\beta)x_\gamma$ is not
a scalar multiple of $x_\alpha+x_\beta$; thus $x_\alpha+x_\beta$ is
not strictly regular.
\end{proof}

\begin{lem}\label{lspan}
The strictly regular elements span $\g_1$.
\end{lem}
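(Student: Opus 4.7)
The plan is to show that the span $V$ of the strictly regular elements contains a Zariski-dense subset of $\g_1$. Since $V$ is a linear subspace of the finite-dimensional space $\g_1$, it is automatically Zariski-closed, so containing a dense set would force $V=\g_1$. The natural dense set to aim for is orbit~$4$, which is already known (from the discussion in Section~\ref{secprelim}) to be Zariski-dense in $\g_1$.

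To execute this, I would first use Proposition~\ref{porbit1} to identify the strictly regular elements with orbit~$1$. In particular, the set of strictly regular elements is $G_0$-invariant, so $V$ is itself a $G_0$-invariant linear subspace of $\g_1$. Next, $\alpha$ is a long root of $\alpha$-height~$1$ by our standing assumptions, and Lemma~\ref{lrhominus} ensures that $\rho-\alpha$ is also a long root of $\alpha$-height~$1$. Corollary~\ref{cbetasr} therefore tells us that both $x_\alpha$ and $x_{\rho-\alpha}$ are strictly regular, so their sum $x_\alpha+x_{\rho-\alpha}$ lies in $V$.

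The preliminaries record that $x_\alpha+x_{\rho-\alpha}$ is a representative of orbit~$4$ (in all the types we are considering). Hence the $G_0$-invariance of $V$ forces $V$ to contain the entire $G_0$-orbit of $x_\alpha+x_{\rho-\alpha}$, i.e., all of orbit~$4$. Since orbit~$4$ is Zariski-dense in $\g_1$ while $V$ is Zariski-closed, we conclude $V=\g_1$.

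I do not expect any serious obstacle: the argument is essentially a density/invariance packaging of two facts already available in the paper, namely the identification of strictly regular elements with orbit~$1$ (Proposition~\ref{porbit1}) and the explicit representative $x_\alpha+x_{\rho-\alpha}$ of the dense orbit~$4$. One could equivalently use the $4$-term representative $\sum_{i=1}^4 x_{\beta_i}$ with mutually orthogonal $\beta_i$ — these $\beta_i$ are long by Lemma~\ref{lortholong}, so the same argument goes through — but the two-term version is the shortest route.
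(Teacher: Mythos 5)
Your proposal is correct and follows essentially the same route as the paper: both identify $x_\alpha+x_{\rho-\alpha}$ as a sum of strictly regular elements that represents the dense orbit, use $G_0$-invariance of the span to conclude it contains all of orbit~$4$, and finish by noting a dense set cannot lie in a proper (closed) subspace. The only cosmetic difference is that you cite Corollary~\ref{cbetasr} for the strict regularity of $x_\alpha$ and $x_{\rho-\alpha}$ where the paper cites their membership in orbit~$1$ via Proposition~\ref{porbit1}.
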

\begin{proof}
By Proposition~\ref{porbit1}, orbit~$1$ consists of strictly regular elements.
The span of orbit~$1$ is invariant under the action of $\go$; thus it is
a union of orbits.  Both $x_\alpha$ and $x_{\rho-\alpha}$ are in
orbit~$1$, so $x_\alpha + x_{\rho-\alpha}$ is in their span, but is
also a representative of the dense orbit.  Thus all of the dense orbit is
in the span of orbit~$1$.  Since the dense orbit is not contained in a
proper subspace, the span of orbit~$1$ is all of $\g_1$.
\end{proof}

An element $x\in\g_1$ is {\it rank one} if $x x \g_1$ is a one-dimensional
vector space over $F$.

\begin{prop}\label{prank1}
An element $x\in\g_1$ is strictly regular if and only if it is rank one.
\end{prop}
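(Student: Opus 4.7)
The plan is to prove the two implications separately. The forward direction follows directly from the definitions: if $x$ is strictly regular then $xx\g_1 \subseteq Fx$, which has dimension at most one, and Lemma~\ref{lxxy} applied to the (nonzero) element $x$ yields some $y$ with $xxy \neq 0$, so $xx\g_1 = Fx$ has dimension exactly one.

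For the converse I would argue via orbit analysis. The condition $\dim xx\g_1 \leq 1$ is Zariski-closed (it is cut out by the $2 \times 2$ minors of the matrix representing $y \mapsto xxy$, whose entries are polynomial in $x$) and is $\go$-invariant, since $\goss$ preserves the triple product by Lemma~\ref{lgoss} while the central torus of $\go$ acts by scalars that merely rescale $xx\g_1$ as a subspace. Thus $R := \{x \in \g_1 : \dim xx\g_1 \leq 1\}$ is a closed union of $\go$-orbits, and by the forward direction together with Proposition~\ref{porbit1}, $R$ contains orbit~$1$ and hence its closure $\{0\} \cup \text{orbit}~1$. To obtain equality it suffices to show that no level-$2$ orbit lies in $R$; the level-$3$ and level-$4$ orbits are then excluded automatically, since the closure of each contains every level-$2$ orbit and $R$ is closed.

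To rule out a level-$2$ orbit I would take a representative $x = x_{\beta_1} + x_{\beta_2}$ chosen (as guaranteed by the preliminaries) so that $\beta_1, \beta_2$ extend to a mutually orthogonal quadruple $\beta_1, \beta_2, \beta_3, \beta_4$ of roots of $\alpha$-height~$1$, necessarily all long by Lemma~\ref{lortholong}, and then compute both $xxx_{\beta_3}$ and $xxx_{\beta_4}$. Each diagonal term $x_{\beta_i} x_{\beta_i} x_{\beta_j}$ ($i \in \{1,2\}$, $j \in \{3,4\}$) lies in $Fx_{\beta_i}$ by Corollary~\ref{cbetasr} and vanishes because $q(x_{\rho - \beta_i}, x_{\beta_i}, x_{\beta_i}, x_{\beta_j}) = 0$ by Corollary~\ref{ctworho}: the alternative $\beta_j = \rho - \beta_i$ would force $\gen{\beta_j, \beta_i} = \gen{\rho, \beta_i} - \gen{\beta_i, \beta_i} = -1$, contradicting orthogonality. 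The surviving cross terms $2 x_{\beta_1} x_{\beta_2} x_{\beta_j}$ are nonzero multiples of $x_{\rho - \beta_k}$ (where $\{j,k\} = \{3,4\}$) by Corollary~\ref{ctworho} and Lemma~\ref{labcd}, and since $x_{\rho - \beta_3}, x_{\rho - \beta_4}$ are linearly independent, $\dim xx\g_1 \geq 2$. The main technical obstacle will be the bookkeeping of which $4$-linear form evaluations vanish, for which Corollary~\ref{ctworho} is essential; for type $D_4$ the same computation must be performed for each of the three distinct level-$2$ orbit representatives, but the argument is uniform.
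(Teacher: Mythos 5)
Your proof is correct and follows essentially the same strategy as the paper's: reduce by Zariski-closedness and $\go$-invariance to a level-$2$ representative $x_{\beta_1}+x_{\beta_2}$ inside an orthogonal quadruple, then exhibit two triple products $xx(-)$ that do not lie on a common line. The only difference is the choice of test vectors: you use $x_{\beta_3}$ and $x_{\beta_4}$ and pin down $xxx_{\beta_j}$ as a nonzero multiple of $x_{\rho-\beta_k}$ using only Corollary~\ref{ctworho} and Lemma~\ref{labcd}, whereas the paper tests against $x_{\rho-\alpha}$ and $x_\gamma$ and therefore also invokes the computation \eqref{eqcomp2} of Lemma~\ref{lcase2}; your variant is marginally more economical but not a different method.
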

\begin{proof}
Suppose $x$ is strictly regular.  By definition,  $x x \g_1$ is contained in the
one-dimensional space $Fx$.  In the case $x=x_\alpha$, we know $x_\alpha x_\alpha \g_1$
is not zero because $\gen{x_{\rho-\alpha}, x_\alpha x_\alpha x_{\rho-\alpha}} =
q(x_{\rho-\alpha}, x_\alpha, x_\alpha, x_{\rho-\alpha})$, which is~$1$ by \eqref{eqcomp1}.
The condition that $x x \g_1$ is not zero is invariant under the action of $\go$, so it holds for all
of orbit~$1$.

As in the proof of the previous proposition, let $\alpha,\beta,\gamma,\delta$
be four mutually orthogonal roots of $\alpha$-height~$1$, and choose
$x = x_\alpha + x_\beta$ as a representative of a level~$2$ orbit.  Since the
set of rank one elements is a closed union of orbits, it will suffice
to show that $x$ is not rank one.  We have
$\gen{x_{\rho-\beta},x x x_{\rho-\alpha}}=2q(x_{\rho-\beta},x_\alpha,x_\beta,x_{\rho-\alpha})\ne0$,
by Corollary~\ref{ctworho} and \eqref{eqcomp2}.  However,
$\gen{x_{\rho-\beta},x x x_\gamma} =
q(x_{\rho-\beta},x_\alpha, x_\alpha, x_\gamma) +
q(x_{\rho-\beta},x_\beta, x_\beta, x_\gamma) +
2q(x_{\rho-\beta},x_\alpha,x_\beta,x_\gamma) = 0$,
where we know the first term is zero because it is
$\gen{x_{\rho-\beta},x_\alpha x_\alpha x_\gamma}$ and the triple product
is a scalar multiple of $x_\alpha$; the other two terms are zero
by Corollary~\ref{ctworho}.  On the other hand,
we know that $x x x_\gamma$ is nonzero since $\gen{x_\delta,x x x_\gamma}=
2q(x_\alpha,x_\beta,x_\gamma,x_\delta)$ which is not zero by
Lemma~\ref{labcd}.  Thus $xxx_{\rho-\alpha}$ and $xxx_\gamma$ do not
lie in the same one-dimensional subspace, so $x$ is not rank one.
\end{proof}

The following result allows us to compute the triple product and the
$4$-linear form if two of the arguments are the same strictly regular
element.

\begin{lem}\label{leq5}
For $x$ strictly regular and any $y,z\in\g_1$,
\begin{align}
x x y &= \gen{y,x}x,\label{eq5a}\\
q(x,x,y,z) &= \gen{y,x}\gen{z,x}.\label{eq5b}
\end{align}
\end{lem}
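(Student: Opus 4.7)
The plan is to prove the triple-product identity \eqref{eq5a} first, since then \eqref{eq5b} follows immediately from the definition of the triple product:
\[
q(x,x,y,z) = \gen{z, xxy} = \gen{z,\gen{y,x}x} = \gen{y,x}\gen{z,x}.
\]

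For \eqref{eq5a}, I use strict regularity to write $xxy = f(y)x$ for a unique linear functional $f = f_x$ on $\g_1$. Pairing against an arbitrary $w\in\g_1$ gives
\[
q(w,x,x,y) = \gen{w,xxy} = f(y)\gen{w,x},
\]
while the full symmetry of $q$ (swapping the first and last arguments) yields
\[
q(w,x,x,y) = q(y,x,x,w) = \gen{y,xxw} = f(w)\gen{y,x}.
\]
Hence $f(y)\gen{w,x} = f(w)\gen{y,x}$ for all $y,w$, and since $\gen{-,x}$ is not identically zero by Lemma~\ref{lnondeg}, this forces $f(y) = c(x)\gen{y,x}$ for a scalar $c(x)$ depending only on $x$. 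So $xxy = c(x)\gen{y,x}x$, and the task reduces to showing $c(x) = 1$.

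I compute $c(x_\alpha)$ directly by taking $w = y = x_{\rho-\alpha}$: the resulting equation
\[
q(x_{\rho-\alpha}, x_\alpha, x_\alpha, x_{\rho-\alpha}) = c(x_\alpha)\gen{x_{\rho-\alpha}, x_\alpha}^2
\]
has left side equal to $1$ by Lemma~\ref{lcase1} (and the symmetry of $q$), while $\gen{x_{\rho-\alpha},x_\alpha} = c_{\rho-\alpha,\alpha}$ is $\pm 1$ since $\alpha$ and $\rho - \alpha$ are both long (Lemma~\ref{lrhominus}); hence $c(x_\alpha) = 1$. To extend this to an arbitrary strictly regular $x$, I invoke Proposition~\ref{porbit1}, which places $x$ in the $\go$-orbit of $x_\alpha$; since the projective $\goss$-orbits on $\g_1$ correspond bijectively to the nonzero $\go$-orbits, I may write $x = \lambda(g \cdot x_\alpha)$ for some $\lambda \in F^*$ and $g \in \goss$. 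Both sides of \eqref{eq5a} are homogeneous of degree two in $x$ and linear in $y$, so the identity is preserved under scaling; and by Lemma~\ref{lgoss}, $\goss$ preserves both forms (hence the triple product), so it is preserved under the action of $g$ as well. Consequently \eqref{eq5a} propagates from $x_\alpha$ to all of orbit~1. The only subtle point is this final extension, which rests entirely on the fact that strict regularity characterizes a single $\go$-orbit; once that is in hand, the rest is symmetry and homogeneity.
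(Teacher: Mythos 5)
Your proof is correct and follows essentially the same route as the paper: both reduce \eqref{eq5a} to showing the proportionality constant in $xxy=f(y)x$ equals $1$, pin that constant down using $q(x_\alpha,x_\alpha,x_{\rho-\alpha},x_{\rho-\alpha})=1$ from Lemma~\ref{lcase1}, and transport the result across orbit~$1$ via the $\goss$-action and homogeneity. The only cosmetic differences are that you derive $f\propto\gen{-,x}$ from the symmetry relation $f(y)\gen{w,x}=f(w)\gen{y,x}$ rather than from the vanishing of $f$ on the hyperplane $\ker\gen{-,x}$, and you normalize at $x_\alpha$ first and then transport, whereas the paper pulls the test vector $x_{\rho-\alpha}$ back to the general $x$.
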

\begin{proof}
Since $x$ is strictly regular, for any $y\in\g_1$ we have $xxy\in Fx$.
If $\gen{y,x}=0$, then for any $z\in\g_1$ we have $\gen{z,xxy}=q(z,x,x,y)
=\gen{y,xxz}=0$, thus $xxy=0$.
Define $f : \g_1\to F$ by $xxy = f(y)x$; then $f$ is a linear form and
$f(y)$ is zero whenever $\gen{y,x}$ is zero.  Thus $f(-)$ is a
scalar multiple of $\gen{-,x}$.

By Proposition~\ref{porbit1}, $x$ is in orbit~$1$; by Lemma~\ref{lalphasr},
so is $x_\alpha$.  Hence there is some element $g\in\goss$ such that
$g\cdot x=c x_\alpha$ for some $c\in F^\times$.  Let
$x' = g^{-1}\cdot x_{\rho-\alpha}$; since the bilinear form is
preserved by the action of $\goss$ (Lemma~\ref{lgoss}), we have
$\gen{x',x} = \gen{x_{\rho-\alpha},c x_\alpha} = \pm c$.
We can now compute $q(x,x,x',x')$ in two ways.  On the one hand, since
the $4$-linear form is also preserved, we have
\begin{align*}
q(x,x,x',x')&=q(c x_\alpha, c x_\alpha, x_{\rho-\alpha},x_{\rho-\alpha})\\
            &=c^2 q(x_\alpha, x_\alpha, x_{\rho-\alpha},x_{\rho-\alpha})\\
            &=c^2\qquad\qquad(\text{by \eqref{eqcomp1}})\\
            &=\gen{x',x}^2.
\end{align*}
On the other hand, it is $\gen{x',xxx'}=\gen{x',f(x')x}=f(x')\gen{x',x}$.
Thus $f(x')=\gen{x',x}$, and therefore $f(y) = \gen{y,x}$ for any $y\in\g_1$.

By the definition of $f$, we now have $x x y = \gen{y,x}x$ for all $y\in\g_1$.
Further, for any $z\in\g_1$ we have $q(x,x,y,z)=\gen{z,xxy}=\gen{y,x}\gen{z,x}$.
\end{proof}

\begin{lem}\label{lunique}
Each element in the dense orbit of $\g_1$ can be expressed as the
sum of two strictly regular elements in one and only one way.
\end{lem}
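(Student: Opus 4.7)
The plan is to handle existence and uniqueness separately.

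For existence, I would observe that $x_\alpha + x_{\rho-\alpha}$ is a representative of the dense orbit (as recorded in Section~\ref{secprelim}) and is already the sum of two strictly regular elements by Corollary~\ref{cbetasr}. Since every element of the dense orbit has the form $g \cdot (x_\alpha + x_{\rho-\alpha})$ for some $g \in \go$, and the set of strictly regular elements is $\go$-stable (being an orbit, by Proposition~\ref{porbit1}), the decomposition transports to an arbitrary $v$ in the dense orbit.

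For uniqueness, the key calculation will be to show that whenever $v = x + y$ with $x, y$ strictly regular, the triple product has the explicit form
\[
vvv = 3\gen{y,x}(x - y).
\]
To obtain this, I would expand by multilinearity and symmetry of the triple product:
\[
vvv = xxx + 3xxy + 3xyy + yyy,
\]
and evaluate each term using Lemma~\ref{leq5} (first with $x$ as the strictly regular argument, then with $y$). The extreme terms vanish because $\gen{x,x} = \gen{y,y} = 0$, and the middle terms collapse to $3\gen{y,x}x + 3\gen{x,y}y = 3\gen{y,x}(x-y)$. Pairing with $v$ then gives
\[
q(v) = \gen{v, vvv} = 6\gen{y,x}^2,
\]
which is nonzero on the dense orbit: the representative $x_\alpha + x_{\rho-\alpha}$ has $q = 6$ (remarked after Lemma~\ref{lcase1}), and $q$ is a $\go$-semi-invariant, so its vanishing locus is $\go$-stable.

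Given $v$ in the dense orbit, $\gen{y,x}$ is then determined up to sign by $q(v) = 6\gen{y,x}^2$, and $x - y = (3\gen{y,x})^{-1}\,vvv$ is likewise determined up to sign. Combined with $x + y = v$, this recovers the unordered pair $\{x, y\}$; the two possible sign choices merely interchange $x$ and $y$. The main hurdle is the bookkeeping in the triple-product expansion, but once $vvv = 3\gen{y,x}(x-y)$ is in hand the rest is a formal linear-algebra recovery.
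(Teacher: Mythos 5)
Your proposal is correct and follows essentially the same route as the paper: the same expansion $vvv = 3\gen{y,x}(x-y)$ via Lemma~\ref{leq5}, the same identity $q(v)=6\gen{y,x}^2$, and the same recovery of the unordered pair from $v$, $q(v)$ and $vvv$ up to the interchange of summands. The only cosmetic difference is that you run the uniqueness step directly on an arbitrary element of the dense orbit, whereas the paper first reduces to the representative $x_\alpha+x_{\rho-\alpha}$ and compares decompositions there; both hinge on the same computation.
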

\begin{proof}
Since the action of $\goss$ and scaling by elements of $F^\times$ both preserve
strictly regular elements, it suffices to prove this for any representative of
the dense orbit.  We choose $x = x_\alpha + x_{\rho-\alpha}$ as
the representative, which establishes the existence of such an expression.

Suppose $x = u + v$ with $u,v$ strictly regular.  The triple product $xxx$ is thus
\begin{align*}
(u + v)(u + v)(u + v) &= uuu + 3uuv + 3uvv + vvv\\
  &= \gen{u,u}u + 3\gen{v,u}u + 3\gen{u,v}v + \gen{v,v}v\\
  &= 3\gen{v,u}(u - v);
\end{align*}
in particular, this is true if $u=x_\alpha$ and $v=x_{\rho-\alpha}$, so we
have shown that
\begin{equation}\label{equv}
3\gen{v,u}(u-v) = 3\gen{x_{\rho-\alpha},x_\alpha}(x_\alpha - x_{\rho-\alpha}).
\end{equation}
The quartic form $q(x)=\gen{x,xxx}$ is thus
\begin{align*}
\gen{u + v, 3\gen{v,u}(u - v)} &= 3\gen{v,u}(-\gen{u,v}+\gen{v,u})\\
  &= 6\gen{v,u}^2;
\end{align*}
again, this must be the same as $6\gen{x_{\rho-\alpha},x_\alpha}^2$.  Thus
$\gen{v,u}=\pm\gen{x_{\rho-\alpha},x_\alpha}$, so \eqref{equv}
yields $u - v = \pm(x_\alpha - x_{\rho-\alpha})$.  Combined with
$u + v = x_\alpha + x_{\rho-\alpha}$, one choice of sign yields
$u = x_\alpha$, $v = x_{\rho-\alpha}$, and the other $u = x_{\rho-\alpha}$,
$v=x_\alpha$, so the choice of $u$ and $v$ is determined up to order.
\end{proof}

\section{Freudenthal triple systems}\label{secfts}

A {\it Freudenthal triple system} is a finite-dimensional vector space $V$
over a field~$F$ (with characteristic not $2$ or $3$) such that
\begin{itemize}
\item There is a nonzero quartic form $q$ defined on $V$.  A corresponding
$4$-linear form, also called $q$, is given by linearization, with $q(x,x,x,x) = q(x)$
for all $x\in V$.
\item There is a nondegenerate skew-symmetric bilinear form $\gen{-,-}$ defined
on $V$.  Thus for given $x,y,z\in V$ we may define the triple product $xyz$ to
be the unique vector in $V$ such that $q(w,x,y,z) = \gen{w,xyz}$ for all $w\in V$.
\item The triple product satisfies the following identity:
\begin{equation}
2(xxx)xy = \gen{y,x}xxx + \gen{y,xxx}x.\label{eqtriple}
\end{equation}
\end{itemize}

Definitions of Freudenthal triple system in the literature vary.
For example, in \cite{ferrar} the $2$ on the left-hand side of \eqref{eqtriple} is omitted;
in \cite{springer} the $2$ becomes a $6$ and the triple product
is defined so that $8 q(w,x,y,z) = \gen{xyz,w}$.  However, these
variations are inessential; it is easy to convert one definition to
another by rescaling the quartic and bilinear forms as needed.

\begin{thm}\label{tFTS}
The vector space $\g_1$ equipped with the quartic form $q$ and the bilinear form
$\gen{-,-}$ is a Freudenthal triple system.
\end{thm}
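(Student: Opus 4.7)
The plan is to verify the three defining conditions of a Freudenthal triple system. The quartic form $q$ is nonzero since $q(x_\alpha+x_{\rho-\alpha})=6$ by Lemma~\ref{lcase1}, and the bilinear form $\gen{-,-}$ is nondegenerate by Lemma~\ref{lnondeg}. The real task is thus the triple product identity~\eqref{eqtriple}.

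Both sides of \eqref{eqtriple} are polynomial in $(x,y)$, homogeneous of degree four in $x$ and linear in $y$; so it suffices to verify the identity on a Zariski-dense subset of $\g_1\times\g_1$ (over the algebraic closure, with the identity then descending to $F$). The dense $\go$-orbit supplies such a set for $x$: by Lemma~\ref{lunique} every element of this orbit can be written as $x=u+v$ with $u$ and $v$ strictly regular. I would fix such a decomposition and arbitrary $y\in\g_1$, then compute both sides explicitly.

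The computation rests entirely on Lemma~\ref{leq5} applied to $u$ and $v$. Skew-symmetry of $\gen{-,-}$ gives $uuu=\gen{u,u}u=0$ and $vvv=0$, while $uuv=\gen{v,u}u$ and $uvv=-\gen{v,u}v$, so symmetric expansion of the triple product yields $xxx=3\gen{v,u}(u-v)$. Expanding $(xxx)xy=3\gen{v,u}(uuy-vvy)$ and applying Lemma~\ref{leq5} once more shows the left-hand side of \eqref{eqtriple} is $6\gen{v,u}\bigl(\gen{y,u}u-\gen{y,v}v\bigr)$. The right-hand side $\gen{y,x}xxx+\gen{y,xxx}x$, expanded using $\gen{y,x}=\gen{y,u}+\gen{y,v}$ and $\gen{y,xxx}=3\gen{v,u}(\gen{y,u}-\gen{y,v})$, simplifies to the same expression.

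I do not foresee a serious obstacle; the main work is the straightforward bookkeeping in the previous step. The one subtlety is that Lemma~\ref{lunique} may provide the decomposition $x=u+v$ only after extending scalars to the algebraic closure, but this is harmless because \eqref{eqtriple} is a polynomial identity with coefficients in $F$, so vanishing on a Zariski-dense set over $\bar F$ forces vanishing of the polynomial and hence the identity over $F$ itself.
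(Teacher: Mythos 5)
Your proposal is correct and follows essentially the same route as the paper: the identity \eqref{eqtriple} is checked on the dense orbit by writing $x=u+v$ with $u,v$ strictly regular and applying Lemma~\ref{leq5}, then extended to all of $\g_1$ by Zariski density. The only cosmetic difference is that the paper performs the computation at the single representative $x_\alpha+x_{\rho-\alpha}$ and transports the identity along the $\goss$-action, whereas you compute at an arbitrary point of the dense orbit via the decomposition of Lemma~\ref{lunique} (whose existence half itself relies on that same equivariance), so the two arguments coincide in substance.
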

\begin{proof}
We established in Section~\ref{secforms} that $\gen{-,-}$ is skew-symmetric and
nondegenerate and that $q$ is nonzero.  Hence it remains only to show that the
triple product identity \eqref{eqtriple} is satisfied.

We first set $x = x_\alpha+x_{\rho-\alpha}$.  As in the proof of
Lemma~\ref{lunique}, we use \eqref{eq5a} to compute
$xxx = 3\gen{x_{\rho-\alpha},x_\alpha}(x_\alpha - x_{\rho-\alpha})$.
Thus the left-hand side of \eqref{eqtriple} is
\begin{align*}
2(xxx)xy &= 6\gen{x_{\rho-\alpha},x_\alpha}(x_\alpha - x_{\rho-\alpha})(x_\alpha + x_{\rho-\alpha})y\\
  &= 6\gen{x_{\rho-\alpha},x_\alpha}(x_\alpha x_\alpha y - x_{\rho-\alpha}x_{\rho-\alpha}y)\\
  &= 6\gen{x_{\rho-\alpha},x_\alpha}(\gen{y,x_\alpha} x_\alpha - \gen{y,x_{\rho-\alpha}}x_{\rho-\alpha}).
\end{align*}
The right-hand side is
\begin{align*}
\gen{y,x}xxx+\gen{y,xxx}x &= 3\gen{x_{\rho-\alpha},x_\alpha}
     (\gen{y,x_\alpha} + \gen{y,x_{\rho-\alpha}})(x_\alpha - x_{\rho-\alpha})\\
  &\quad + 3\gen{x_{\rho-\alpha},x_\alpha}
     (\gen{y,x_\alpha} - \gen{y,x_{\rho-\alpha}})(x_\alpha + x_{\rho-\alpha})\\
  &= 6\gen{x_{\rho-\alpha},x_\alpha}(\gen{y,x_\alpha} x_\alpha - \gen{y,x_{\rho-\alpha}}x_{\rho-\alpha});
\end{align*}
thus \eqref{eqtriple} holds for $x=x_\alpha+x_{\rho-\alpha}$ and any $y\in\g_1$.

Since the action of $\goss$ on $\g_1$ stabilizes the bilinear form and the
triple product, and since \eqref{eqtriple} is preserved if $x$ is adjusted
by a scalar factor, it holds for the entire orbit of $x$, which is the dense orbit.
Since the identity is a polynomial condition it also holds on the closure,
which is all of $\g_1$.
\end{proof}

\section{Computation of the $4$-linear form}\label{secq}

In this section we show how to evaluate the expression
$q(x_\beta,x_\gamma,x_\delta,x_\epsilon)$
whenever $\beta,\gamma,\delta,\epsilon$ are {\it long} roots of $\alpha$-height~$1$.
Among the Lie algebras we are considering, the roots are always long in
types $D$ and $E$, so, by linearity, this will suffice to
compute $q$ for any values in $\g_1$ in these cases.

\begin{lem}
Suppose $\beta_1,\beta_2,\beta_3,\beta_4$ are long roots of $\alpha$-height~$1$
and that their sum is $2\rho$.  It follows that
\begin{equation}\label{eqb1}
\gen{\beta_1,\beta_2} + \gen{\beta_1,\beta_3} + \gen{\beta_1,\beta_4} = 0
\end{equation}
and
\begin{equation}\label{eqb2}
\gen{\beta_1,\beta_2} = \gen{\beta_3,\beta_4}.
\end{equation}
\end{lem}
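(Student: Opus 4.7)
The plan is to derive both identities by pairing the sum condition $\beta_1+\beta_2+\beta_3+\beta_4 = 2\rho$ with appropriate vectors, exploiting that all four roots are long. Since the $\beta_i$ and $\rho$ all have squared length $(\rho,\rho) =: L$, the quantity $\gen{\beta_i,\beta_j}$ reduces to $2(\beta_i,\beta_j)/L$ and is thus symmetric and bilinear in both arguments on the span of these vectors; in particular $\gen{\beta_i,\beta_i}=2$, while $\gen{\beta_i,\rho}=1$ by the $\alpha$-height hypothesis, giving $(\beta_i,\rho) = L/2$. The long-root hypothesis is really the only nontrivial input — once $\gen{-,-}$ is known to behave as a symmetric bilinear form on the relevant set, the rest is arithmetic.

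For \eqref{eqb1}, I would take the ordinary inner product of both sides of the sum condition with $\beta_1$. The right-hand side contributes $2(\beta_1,\rho) = L$, while the left-hand side contributes $L + (\beta_1,\beta_2) + (\beta_1,\beta_3) + (\beta_1,\beta_4)$. Cancelling the $L$'s and multiplying through by $2/L$ converts the surviving sum into $\gen{\beta_1,\beta_2} + \gen{\beta_1,\beta_3} + \gen{\beta_1,\beta_4} = 0$, which is exactly \eqref{eqb1}.

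For \eqref{eqb2}, the cleanest route is to apply \eqref{eqb1} with each of the four indices in turn (the roles of the four roots are interchangeable), producing four linear relations in the six pairings $\gen{\beta_i,\beta_j}$, $i\ne j$. Summing the relations associated to indices $1$ and $2$ gives $2\gen{\beta_1,\beta_2} + \gen{\beta_1,\beta_3} + \gen{\beta_1,\beta_4} + \gen{\beta_2,\beta_3} + \gen{\beta_2,\beta_4} = 0$, while summing those associated to indices $3$ and $4$ gives the same expression with $2\gen{\beta_3,\beta_4}$ replacing $2\gen{\beta_1,\beta_2}$. Subtracting the two yields \eqref{eqb2}. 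There is no genuine obstacle; both identities are essentially bookkeeping once the long-root hypothesis has been exploited.
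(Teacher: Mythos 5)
Your proof is correct. For \eqref{eqb1} you are doing essentially what the paper does: the paper pairs $\beta_1$ against $2\rho-\beta_1=\beta_2+\beta_3+\beta_4$ and uses $\gen{\rho,\beta_1}=1$, $\gen{\beta_1,\beta_1}=2$, which is the same computation as your pairing of $\beta_1$ against the full sum condition. Where you genuinely diverge is \eqref{eqb2}. The paper proves it independently, by expanding the equal squared norms $(\beta_1+\beta_2,\beta_1+\beta_2)=(2\rho-\beta_3-\beta_4,2\rho-\beta_3-\beta_4)$ and cancelling. You instead derive \eqref{eqb2} as a formal linear-algebra consequence of \eqref{eqb1}: writing $a_{ij}=\gen{\beta_i,\beta_j}$ (symmetric because all four roots are long), the four instances of \eqref{eqb1} give $a_{12}+a_{13}+a_{14}=0$ and its permutations, and $(\text{relation}_1+\text{relation}_2)-(\text{relation}_3+\text{relation}_4)=2a_{12}-2a_{34}=0$. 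Both arguments rest on the same single nontrivial input (the long-root hypothesis makes $\gen{-,-}$ a symmetric bilinear pairing $2(\cdot,\cdot)/L$ on these roots); your route has the small advantage of needing only one geometric computation rather than two, exhibiting \eqref{eqb2} as purely combinatorial bookkeeping on top of \eqref{eqb1}, while the paper's route keeps each identity self-contained. Either is perfectly acceptable.
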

\begin{proof}
We may reverse the arguments of $\gen{-,-}$
whenever both are long roots.  Thus to show \eqref{eqb1} we compute
$\gen{\beta_1,\beta_2} + \gen{\beta_1,\beta_3} + \gen{\beta_1,\beta_4} =
\gen{\beta_2,\beta_1} + \gen{\beta_3,\beta_1} + \gen{\beta_4,\beta_1} =
\gen{2\rho-\beta_1,\beta_1} = 2\gen{\rho,\beta_1}-\gen{\beta_1,\beta_1} = 0$.

To show \eqref{eqb2}, we expand the equal expressions
$(\beta_1+\beta_2,\beta_1+\beta_2)$ and
$(2\rho-\beta_3-\beta_4,2\rho-\beta_3-\beta_4)$.
Taking the long roots to have unit length, we have on the one hand
$(\beta_1+\beta_2,\beta_1+\beta_2) = 2 + 2(\beta_1,\beta_2)$.
Keeping in mind that, for example, $2(\rho,\beta_3)=\gen{\rho,\beta_3}=1$,
we have on the other hand
\begin{align*}
(2\rho-\beta_3-\beta_4,2\rho-\beta_3-\beta_4) &=
   6 - 4(\rho,\beta_3) - 4(\rho,\beta_4) + 2(\beta_3,\beta_4)\\
   &= 2 + 2(\beta_3,\beta_4).
\end{align*}
Thus $2(\beta_1,\beta_2)=2(\beta_3,\beta_4)$; that is,
$\gen{\beta_1,\beta_2} = \gen{\beta_3,\beta_4}$.
\end{proof}

\begin{prop}\label{psumtworho}
If the sum of four long roots of $\alpha$-height~$1$ is $2\rho$, then one
of the following three cases must hold:
\begin{enumerate}
\item[$(a)$] The four roots consist of two equal pairs; that is, they are of
the form $\beta,\beta,\rho-\beta,\rho-\beta$ for some $\beta$.
\item[$(b)$] The four roots consist of distinct pairs that sum to $\rho$;
that is, they are of the form $\beta,\rho-\beta,\gamma,\rho-\gamma$
for distinct $\beta,\gamma$.  Moreover, we may take $\beta$ and $\gamma$
to be orthogonal.
\item[$(c)$] The four roots are mutually orthogonal.
\end{enumerate}
\end{prop}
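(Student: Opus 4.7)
The plan is to reduce to a finite case analysis via the two identities \eqref{eqb1} and \eqref{eqb2} from the preceding lemma, together with the observation that for distinct long roots $\beta_i,\beta_j$ of $\alpha$-height~$1$ (which cannot be opposite, as they share the same nonzero $\alpha$-height) $\langle\beta_i,\beta_j\rangle$ takes values in $\{-1,0,1\}$.

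First I would dispose of the degenerate case in which two of the $\beta_i$ coincide. If $\beta_1=\beta_2$, then $\langle\beta_1,\beta_2\rangle=2$, forcing $\langle\beta_3,\beta_4\rangle=2$ by \eqref{eqb2}, hence $\beta_3=\beta_4$; the sum condition then pins $\beta_3=\rho-\beta_1$. By relabeling, \eqref{eqb2} handles any pair of coincidences and produces case~$(a)$.

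The heart of the proof is the case in which the four roots are pairwise distinct. Setting $a_{ij}=\langle\beta_i,\beta_j\rangle$, the lemma gives the row-sum $a_{12}+a_{13}+a_{14}=0$ together with the three pair-swap identities $a_{34}=a_{12}$, $a_{24}=a_{13}$, $a_{23}=a_{14}$. With $a_{ij}\in\{-1,0,1\}$, the only triples $(a_{12},a_{13},a_{14})$ summing to zero are $(0,0,0)$ and permutations of $(1,-1,0)$. The first gives case~$(c)$ at once. For the second, after relabeling I would take $a_{12}=1$, $a_{13}=-1$, $a_{14}=0$, which forces $a_{34}=1$, $a_{24}=-1$, $a_{23}=0$. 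Invoking the standard root-system fact that two long roots with pairing~$-1$ sum to a root (of length equal to theirs), and using that $\rho$ is the unique root of $\alpha$-height~$2$, I conclude $\beta_1+\beta_3=\rho$ and $\beta_2+\beta_4=\rho$.

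To put this in the form required by case~$(b)$, I would set $\beta=\beta_1$ and $\gamma=\beta_4$; then $\beta_3=\rho-\beta$, $\beta_2=\rho-\gamma$, and orthogonality of $\beta$ and $\gamma$ comes from $a_{14}=0$. I expect the main subtlety to lie precisely in this final labeling step: among the two $\rho$-complementary pairs $\{\beta_1,\beta_3\}$ and $\{\beta_2,\beta_4\}$, only one cross-pairing (here $a_{14}=0=a_{23}$, in contrast with $a_{12}=1=a_{34}$) is orthogonal, reflecting the fact that $\beta\perp\gamma$ does not in general imply $\beta\perp(\rho-\gamma)$.
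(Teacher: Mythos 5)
Your proposal is correct and follows essentially the same route as the paper: dispose of coincidences via \eqref{eqb2}, then use the row-sum identity \eqref{eqb1} with $\langle\beta_i,\beta_j\rangle\in\{-1,0,1\}$ to force either all pairings zero (case $(c)$) or one each of $1,-1,0$, whence a pairing of $-1$ between two $\alpha$-height-$1$ roots yields a root of $\alpha$-height~$2$, necessarily $\rho$, giving case $(b)$. The paper organizes the non-degenerate case as a dichotomy (some root non-orthogonal to another versus mutual orthogonality) rather than enumerating triples, but the substance, including the final observation that only one cross-pairing in case $(b)$ is orthogonal, is the same.
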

\begin{proof}
Let $\beta_1,\beta_2,\beta_3,\beta_4$ be four such roots.  No two can
be opposite since all have $\alpha$-height~$1$.  If any two are equal,
say $\beta_1=\beta_2$, then by~\eqref{eqb2} we have $2=\gen{\beta_1,\beta_2}=
\gen{\beta_3,\beta_4}$, so $\beta_3=\beta_4$ as well.  This is case (a).

Suppose some root, say $\beta_1$, is not orthogonal to all of the others.
By \eqref{eqb1} we have
$\gen{\beta_1,\beta_2} + \gen{\beta_1,\beta_3} + \gen{\beta_1,\beta_4} = 0$;
since each term is $-1$, $0$ or $1$ and not all are zero, we must have one of each.
Without loss of generality, assume $\gen{\beta_1,\beta_2}=-1$ and
$\gen{\beta_1,\beta_3}=0$; then $\beta_1+\beta_2$
is a root.  Since it has $\alpha$-height~$2$, it must be $\rho$.   By \eqref{eqb2},
we also have $\gen{\beta_3,\beta_4}=-1$, thus also $\beta_3+\beta_4=\rho$.
Thus we are in case (b).  As indicated, we have $\beta_1$ and $\beta_3$
orthogonal.

The only remaining possibility is that the four roots are mutually orthogonal,
which is case (c).
\end{proof}

We now proceed to give the value of $q(\beta_1,\beta_2,\beta_3,\beta_4)$ in
each of the three cases.  We remind the reader that we will be
making extensive use of the facts about structure constants previously
mentioned in Section~\ref{secforms}.

The first case was already handled in Lemma~\ref{lcase1}, where we showed that
$q(x_\beta,x_\beta,x_{\rho-\beta},x_{\rho-\beta}) = 1$ for any long root $\beta$
of $\alpha$-height~$1$.  The second case
was computed in Lemma~\ref{lcase2}; there we found
$q(x_\beta,x_\gamma,x_{\rho-\beta},x_{\rho-\gamma}) = -\frac12 c_{\beta,-\rho} c_{\gamma,-\rho}$
where $\beta$ and $\gamma$ are orthogonal long roots of $\alpha$-height~$1$.
The remaining case is covered by the following lemma.

\begin{lem}
If $\beta_1,\beta_2,\beta_3,\beta_4$ are mutually orthogonal roots of $\alpha$-height~$1$,
then
\begin{equation*}
q(x_{\beta_1},x_{\beta_2},x_{\beta_3},x_{\beta_4}) = 
  c_{\beta_1,\beta_4-\rho}c_{\beta_2,\beta_1-\rho}
  c_{\beta_3,\beta_4-\rho}c_{\beta_4,\beta_1-\rho}
  \ne 0.
\end{equation*}
\end{lem}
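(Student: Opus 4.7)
The plan is to mirror the strategy of Lemma~\ref{lcase2}: use Lemma~\ref{lqsum} to expand the $4$-linear form into a sum of $24$ iterated brackets indexed by $S_4$, identify the contribution of each summand, and show that all $24$ contributions agree.

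First I would check that every summand is nonzero. For each ordering $\pi \in S_4$, I track the $\alpha$-height as successive brackets are applied to $x_{-\rho}$: the first application lands in the root space for $\beta_{\pi(1)} - \rho$; the second cannot produce an element of $\h$, for that would require $\beta_{\pi(2)} = \rho - \beta_{\pi(1)}$, forcing $\gen{\beta_{\pi(1)},\beta_{\pi(2)}} = -1$, contrary to orthogonality. Instead the result lies in the root space for $\beta_{\pi(1)} + \beta_{\pi(2)} - \rho$, which is a root by the argument of Lemma~\ref{laux2}. Using $\sum_i \beta_i = 2\rho$ (Lemma~\ref{lorthotworho}), the third application gives the root space for $\rho - \beta_{\pi(4)}$, and the fourth lands in $\g_2 = F x_\rho$. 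Thus the summand for $\pi$ contributes $P(\pi)\,x_\rho$ where
\[
P(\pi) = c_{\beta_{\pi(1)}, -\rho}\, c_{\beta_{\pi(2)}, \beta_{\pi(1)} - \rho}\, c_{\beta_{\pi(3)}, \beta_{\pi(1)} + \beta_{\pi(2)} - \rho}\, c_{\beta_{\pi(4)}, \rho - \beta_{\pi(4)}};
\]
by Lemma~\ref{lortholong} all factors are $\pm 1$.

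The main task is to show $P(\pi)$ is independent of $\pi$; combined with the factor $1/4!$ from Lemma~\ref{lqsum}, this gives $q(x_{\beta_1},\ldots,x_{\beta_4}) = P(\pi)$. Since $S_4$ is generated by the adjacent transpositions $(1\,2), (2\,3), (3\,4)$, it suffices to verify invariance under each. For $(1\,2)$, Carter's identity \eqref{eqcarter4} applied to the quadruple $\beta_1, \beta_2, -\rho, \rho - \beta_1 - \beta_2$ (whose leading product $c_{\beta_1,\beta_2}$ vanishes since $\beta_1 + \beta_2$ is not a root) yields, after rotation rewriting $c_{\beta_2, \rho - \beta_1 - \beta_2}$ as $-c_{\beta_2, \beta_1 - \rho}$, the equality $c_{\beta_2, -\rho}\, c_{\beta_1, \beta_2 - \rho} = c_{\beta_1, -\rho}\, c_{\beta_2, \beta_1 - \rho}$, precisely the required identity. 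Invariance under $(2\,3)$ follows from Carter on $\beta_2, \beta_3, \beta_1 - \rho, \beta_4 - \rho$, again with one vanishing term. Invariance under $(3\,4)$ reduces, via $c_{\beta_i, \rho - \beta_i} = -c_{\beta_i, -\rho}$, to Carter on $\beta_3, \beta_4, -\rho, \rho - \beta_3 - \beta_4$.

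Finally, I would simplify $P(1,2,3,4)$ to match the claimed form. Rotation gives $c_{\beta_3, \beta_1 + \beta_2 - \rho} = c_{\beta_3, \rho - \beta_3 - \beta_4} = -c_{\beta_3, \beta_4 - \rho}$ and $c_{\beta_4, \rho - \beta_4} = -c_{\beta_4, -\rho}$. Lemma~\ref{laux2} applied to $\beta_1$ and $\beta_4$ asserts that $c_{\beta_1, \beta_4 - \rho}\, c_{\beta_4, \beta_1 - \rho}\, c_{\beta_1, -\rho}\, c_{\beta_4, -\rho} = 1$; since each factor is $\pm 1$, this rearranges to $c_{\beta_1, -\rho}\, c_{\beta_4, -\rho} = c_{\beta_1, \beta_4 - \rho}\, c_{\beta_4, \beta_1 - \rho}$. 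Substituting these into $P(1,2,3,4)$ yields exactly
\[
c_{\beta_1, \beta_4 - \rho}\, c_{\beta_2, \beta_1 - \rho}\, c_{\beta_3, \beta_4 - \rho}\, c_{\beta_4, \beta_1 - \rho},
\]
which is nonzero as a product of $\pm 1$'s. The main obstacle is the bookkeeping required for the three adjacent-transposition invariances, but each reduces to a single application of Carter's identity in which one of the three terms vanishes automatically.
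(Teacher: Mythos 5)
Your proposal is correct and follows essentially the same route as the paper: expand via Lemma~\ref{lqsum} into $24$ iterated brackets, show every summand is a nonzero multiple of $x_\rho$ equal to a product of $\pm1$ structure constants, and prove all $24$ products coincide by applying Carter's identity \eqref{eqcarter4} (equivalently Lemma~\ref{laux2}) to a generating set of $S_4$. The only difference is bookkeeping --- you use the three adjacent transpositions where the paper uses a $4$-cycle and one transposition together with the normalized constants $a_{ij}$ --- and your final simplification to the stated product matches the paper's.
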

\begin{proof}
By Lemma~\ref{lorthotworho}, the sum of four mutually orthogonal roots of $\alpha$-height~$1$
is $2\rho$, and by Lemma~\ref{lortholong} they are all long roots.  We will
apply $\eqref{eqcarter4}$ with $\beta,\gamma,\delta,\epsilon = \beta_1,
\beta_2,\beta_3-\rho,\beta_4-\rho$.  Observe that $\beta+\gamma+\delta+\epsilon=0$
and no two of $\beta,\gamma,\delta,\epsilon$ are opposite; for example,
$\beta+\delta=0$ implies $\beta_1+\beta_3=\rho$, but $\beta_1$ and $\beta_3$ are
orthogonal.  With these values, $\eqref{eqcarter4}$ becomes
\[
	 c_{\beta_1,\beta_2} c_{\beta_3-\rho,\beta_4-\rho} +
	 c_{\beta_2,\beta_3-\rho} c_{\beta_1,\beta_4-\rho} +
   c_{\beta_3-\rho,\beta_1} c_{\beta_2,\beta_4-\rho} = 0.
\]
The structure constants in the first term are zero since $\beta_1+\beta_2$ is
not a root.
Since $\beta_2+\beta_3-\rho$ and $\beta_1+\beta_3-\rho$ are roots the
remaining terms are not zero.

We now have
$c_{\beta_2,\beta_3-\rho} c_{\beta_1,\beta_4-\rho} = -c_{\beta_3-\rho,\beta_1} c_{\beta_2,\beta_4-\rho}$.
Using $a_{ij}$ as an abbreviation for $c_{\beta_i,\beta_j-\rho}$, we can
rewrite this as
\begin{equation}\label{eq3}
a_{23} a_{14} = a_{13} a_{24}.
\end{equation}
Since the numbering of the indices is arbitrary, we think of this as saying
that, in a product of the form $a_{ij}a_{kl}$ that uses four different
indices, we may interchange the first subscripts of the two factors.

Since all the $a_{ij}$ are $\pm1$, we can freely move them across the
equals sign; in particular, we also have
\begin{equation}\label{eq2}
a_{13} a_{23} = a_{14} a_{24};
\end{equation}
in other words, in a product of the form $a_{ij}a_{kj}$ involving three
different indices, the repeated index may be replaced by the unused one.

A typical term in the sum for $q(x_{\beta_1},x_{\beta_2},x_{\beta_3},x_{\beta_4})$
given by Lemma~\ref{lqsum} is
\begin{align*}
c_{-\rho,\beta_1}c_{\beta_1-\rho,\beta_2}c_{\rho-\beta_3-\beta_4,\beta_3}
c_{\rho-\beta_4,\beta_4} &=
c_{\beta_1,-\rho}c_{\beta_2,\beta_1-\rho}c_{\beta_3,\beta_4-\rho}
c_{\beta_4,-\rho}\\
&= c_{\beta_1,\beta_4-\rho}c_{\beta_2,\beta_1-\rho}c_{\beta_3,\beta_4-\rho}
c_{\beta_4,\beta_1-\rho}\\
&= a_{14}a_{21}a_{34}a_{41},
\end{align*}
where we have used Lemma~\ref{laux2} for the second equality.  Every other term in
the sum is obtained by permuting the indices; we will show that the value
is unchanged in each case.  Since the two permutations given by
$1\mapsto2\mapsto3\mapsto4\mapsto1$ and by $1\mapsto2\mapsto1$ generate the
symmetric group, it suffices to show that $a_{21}a_{32}a_{41}a_{12}$ and
$a_{24}a_{12}a_{34}a_{42}$ are the same as the product above.

We first apply the principle of \eqref{eq2} in the form
$a_{14}a_{34}=a_{12}a_{32}$ to find that
$a_{14}a_{21}a_{34}a_{41} = a_{12}a_{21}a_{32}a_{41} = a_{21}a_{32}a_{41}a_{12}$,
so the first required equality holds.  Proceeding from the last expression, we
alternately apply \eqref{eq2} and \eqref{eq3} as follows:
\begin{align*}
a_{21}a_{32}a_{41}a_{12}
&=a_{23}a_{32}a_{43}a_{12},\quad(\text{since }a_{21}a_{41}=a_{23}a_{43})\\
&=a_{23}a_{32}a_{13}a_{42},\quad(\text{since }a_{43}a_{12}=a_{13}a_{42})\\
&=a_{24}a_{32}a_{14}a_{42},\quad(\text{since }a_{23}a_{13}=a_{24}a_{14})\\
&=a_{24}a_{12}a_{34}a_{42},\quad(\text{since }a_{32}a_{14}=a_{12}a_{34})
\end{align*}
which is the required product.

Thus all $24$ summands are equal, so we have
\[
q(x_{\beta_1},x_{\beta_2},x_{\beta_3},x_{\beta_4})=a_{14}a_{21}a_{34}a_{41},
\]
which, by substituting for the $a_{ij}$, becomes the desired equation.
\end{proof}

To summarize, we have the following result.

\begin{prop}\label{pq}
If $\beta_1,\beta_2,\beta_3,\beta_4$ are long roots of $\alpha$-height~$1$, then the
value of $q(x_{\beta_1},x_{\beta_2},x_{\beta_3},x_{\beta_4})$ is one of the following:
\begin{itemize}
\item $0$, if $\beta_1+\beta_2+\beta_3+\beta_4\ne2\rho$;
\item $1$, if $\beta_1+\beta_2+\beta_3+\beta_4=2\rho$ and there are two
pairs of equal roots;
\item $-\frac12 c_{\beta,-\rho} c_{\gamma,-\rho}$ if the roots are, in
some order, $\beta,\gamma,\rho-\beta,\rho-\gamma$ with $\gen{\beta,\gamma}=0$ for
some $\beta,\gamma$; or
\item $c_{\beta_1,\beta_4-\rho}c_{\beta_2,\beta_1-\rho}
       c_{\beta_3,\beta_4-\rho}c_{\beta_4,\beta_1-\rho}$ if the four roots are mutually orthogonal.
\end{itemize}
In particular, $q(x_{\beta_1},x_{\beta_2},x_{\beta_3},x_{\beta_4})$ is nonzero
whenever $\beta_1+\beta_2+\beta_3+\beta_4=2\rho$.
\end{prop}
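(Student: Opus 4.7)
The plan is to assemble this proposition directly from the lemmas and the classification already established in this section, rather than to prove anything fundamentally new. The four bullet points correspond to four exhaustive cases into which $q(x_{\beta_1},\dots,x_{\beta_4})$ with long roots $\beta_i$ of $\alpha$-height $1$ must fall, and each case has already been handled.

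First I would dispatch the vanishing case: if $\beta_1+\beta_2+\beta_3+\beta_4\ne 2\rho$, Corollary~\ref{ctworho} gives $q=0$ immediately. Next I would invoke Proposition~\ref{psumtworho}, which asserts that, \emph{when the sum equals $2\rho$}, the four long roots must (up to reordering) fall into exactly one of the following configurations: (a) two pairs of equal roots $\beta,\beta,\rho-\beta,\rho-\beta$; (b) two distinct pairs summing to $\rho$, with the representatives of the pairs orthogonal, i.e. $\beta,\rho-\beta,\gamma,\rho-\gamma$ with $\gen{\beta,\gamma}=0$; or (c) four mutually orthogonal roots. These three subcases line up exactly with the three remaining bullets of the proposition, so it only remains to quote the value in each.

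For subcase (a), Lemma~\ref{lcase1} gives $q(x_\beta,x_\beta,x_{\rho-\beta},x_{\rho-\beta})=1$. For subcase (b), Lemma~\ref{lcase2} gives $q(x_\beta,x_\gamma,x_{\rho-\beta},x_{\rho-\gamma})=-\tfrac12 c_{\beta,-\rho}c_{\gamma,-\rho}$, noting that the symmetry of the $4$-linear form in its arguments means the ``in some order'' qualifier introduces no sign ambiguity. For subcase (c), the lemma immediately preceding the proposition gives the product $c_{\beta_1,\beta_4-\rho}c_{\beta_2,\beta_1-\rho}c_{\beta_3,\beta_4-\rho}c_{\beta_4,\beta_1-\rho}$.

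Finally, for the ``in particular'' clause, I would observe that in each of the three subcases the value is visibly nonzero: in (a) it is $1$; in (b) and (c) it is a product of structure constants $c_{\mu,\nu}$ where, by construction (and Lemma~\ref{laux2} in case (b), together with Lemma~\ref{lortholong} in case (c)), every $\mu+\nu$ is a root of long roots, hence every such structure constant is $\pm 1$ and the product is $\pm 1$ or $\pm\tfrac12$. There is no real obstacle here — the substantive work was done in Lemmas~\ref{lcase1} and~\ref{lcase2}, the preceding lemma, and the case classification in Proposition~\ref{psumtworho}; the mild subtlety is merely to verify that those three configurations are truly exhaustive and that the symmetry of $q$ lets us quote the lemmas without worrying about the ordering of arguments.
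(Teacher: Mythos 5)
Your proposal is correct and is exactly how the paper treats this proposition: the paper introduces it with ``To summarize, we have the following result'' and gives no separate proof, relying on Corollary~\ref{ctworho} for the vanishing case, Proposition~\ref{psumtworho} for exhaustiveness of the three configurations, and Lemmas~\ref{lcase1}, \ref{lcase2} and the immediately preceding lemma for the three values. Your added remarks on the full symmetry of the $4$-linear form and on why each value is visibly nonzero are accurate and only make explicit what the paper leaves implicit.
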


\section{Eigenspace decomposition of $\g_1$}\label{seceigen}

In this section we assume that $\g$ is a Lie algebra of type $D$ or $E$.
We show that there is an element $h$ in the torus~$\h$
such that $\g_1$ is the direct sum of the four eigenspaces under $\ad h$ corresponding to
the eigenvalues $-3,-1,1,3$, and that the eigenspaces corresponding to the eigenvalues
$-3$ and $3$ are one-dimensional (cf.~\cite{ferrar}, \S4).  This is a consequence of
the following proposition about the corresponding root systems.

\begin{prop}\label{peigen}
Let $\Psi$ be a root system of type $D$ or~$E$.  For any root $\beta\in\Psi$
of $\alpha$-height~$1$ we have
\[
\gen{\rho-2\alpha,\beta}=
\begin{cases}
-3&\text{\rm if }\beta=\alpha,\\
3&\text{\rm if }\beta=\rho-\alpha,\\
\pm1&\text{\rm otherwise.}
\end{cases}
\]
Moreover, the cases $\gen{\rho-2\alpha,\beta}=-1$ and $\gen{\rho-2\alpha,\beta}=1$
occur equally often.
\end{prop}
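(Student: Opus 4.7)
The plan is to reduce everything to a single linear computation that works uniformly in types $D$ and $E$.

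First I would exploit the simply-laced hypothesis: all roots have the same length, so $\langle \gamma, \delta \rangle = \langle \delta, \gamma \rangle$ for any two roots, and $\langle \gamma, \delta \rangle \in \{-1, 0, 1\}$ whenever $\gamma \neq \pm\delta$. Using this together with the hypothesis $\langle \beta, \rho \rangle = 1$, I would rewrite
\[
\langle \rho - 2\alpha, \beta \rangle \;=\; \langle \rho, \beta \rangle - 2\langle \alpha, \beta \rangle \;=\; 1 - 2\langle \alpha, \beta \rangle,
\]
so the entire proposition reduces to determining $\langle \alpha, \beta \rangle$.

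Next I would dispose of the three cases. If $\beta = \alpha$, then $\langle \alpha, \beta \rangle = 2$ gives $-3$; if $\beta = \rho - \alpha$, then $\langle \alpha, \rho - \alpha \rangle = 1 - 2 = -1$ gives $3$. For any other $\beta$ of $\alpha$-height $1$, I would argue that $\langle \alpha, \beta \rangle \neq -1$: otherwise $\alpha + \beta$ is a root of $\alpha$-height $2$, but $\rho$ is the unique such root, forcing $\beta = \rho - \alpha$, a contradiction. Hence $\langle \alpha, \beta \rangle \in \{0,1\}$, and $\langle \rho - 2\alpha, \beta \rangle \in \{1, -1\}$, as claimed.

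For the equal-count assertion, the natural tool is the involution $\beta \mapsto \rho - \beta$ on the set of roots of $\alpha$-height $1$ (available by Lemma~\ref{lrhominus}). I would verify that it is fixed-point-free (since $\beta = \rho/2$ is impossible), swaps the exceptional pair $\{\alpha, \rho-\alpha\}$, and on the complement interchanges the two values of $\langle \alpha, \cdot \rangle$, since
\[
\langle \alpha, \rho - \beta \rangle = 1 - \langle \alpha, \beta \rangle.
\]
So it pairs off roots with $\langle \rho - 2\alpha, \beta \rangle = 1$ with roots where the value is $-1$, giving equal counts. No step should be a real obstacle — the one small point requiring care is the elimination of $\langle \alpha, \beta \rangle = -1$, which depends on using both the uniqueness of $\rho$ at height $2$ and the simply-laced restriction on values of $\langle \cdot, \cdot \rangle$.
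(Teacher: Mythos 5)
Your proposal is correct and follows essentially the same route as the paper: both reduce to the identity $\gen{\rho-2\alpha,\beta}=1-2\gen{\alpha,\beta}$, rule out $\gen{\alpha,\beta}=-1$ for $\beta\ne\rho-\alpha$ via the uniqueness of $\rho$ (you pass through ``$\alpha+\beta$ is a root of $\alpha$-height $2$'' while the paper uses $\gen{\alpha,\rho-\beta}=2\Rightarrow\rho-\beta=\alpha$, a cosmetic difference), and obtain the equal count from the pairing $\beta\mapsto\rho-\beta$, which interchanges the values $0$ and $1$ of $\gen{\alpha,\cdot}$.
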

\begin{proof}
Let $\beta$ be a root of $\alpha$-height~$1$.  For each such root, $\rho-\beta$
is another root of $\alpha$-height~$1$, and we have
$\gen{\alpha,\beta}+\gen{\alpha,\rho-\beta} = 1$.  Since $\gen{\alpha,\beta}=2$
only if $\beta=\alpha$, it follows that $\gen{\alpha,\beta}=-1$ only if $\beta=\rho-\alpha$.
Thus for the remaining pairs of roots $\beta$, $\rho-\beta$ we have
$\gen{\alpha,\beta}=0$ or $1$ and correspondingly $\gen{\alpha,\rho-\beta}=1$ or $0$.

As $\gen{\rho,\beta}=1$, we have $\gen{\rho-2\alpha,\beta}=1-2\gen{\alpha,\beta}$.
Thus $\gen{\rho-2\alpha,\alpha}=-3$ and
$\gen{\rho-2\alpha,\rho-\alpha}=3$, with the remaining cases split equally
between $\gen{\rho-2\alpha,\beta}=1$ and $\gen{\rho-2\alpha,\beta}=-1$.
\end{proof}

The above proposition can be generalized by using $\rho-2\alpha'$ with $\alpha'$ any root
of $\alpha$-height~$1$ in place of $\rho-2\alpha$; the proof goes through unchanged.
However, we do not make use of this added generality.

At this point, we know that the promised element of $\h$ exists because
the Chevalley basis gives an isomorphism between $\h$ and the coroot lattice
with scalars extended to $F$.  To give it explicitly, recall that, for any
root $\beta$, the element $h_\beta\in\h$ is defined to be
$[x_\beta, x_{-\beta}]$ and has the property that
$[h_\beta,x_\gamma]=\gen{\gamma,\beta} x_\gamma$
for any root $\gamma$ (see \cite{humlie}, \S\S 8.3, 25.2).
Setting $h = h_{\rho-\alpha} - h_{\alpha}\in\h$, we then
have $[h,x_\beta] = (\gen{\beta,\rho-\alpha}-\gen{\beta,\alpha})x_\beta =
\gen{\rho-2\alpha,\beta}x_\beta$, yielding the eigenvalue decomposition described
above.

\section{Characterization of the orbits}\label{secorbits}

\begin{lem}\label{laab}
Let $\beta,\gamma$ be roots of $\alpha$-height~$1$.  The triple product
$x_\beta x_\beta x_\gamma$ is zero unless $\beta + \gamma = \rho$.
\end{lem}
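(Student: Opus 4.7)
The plan is to reduce the statement to Lemma~\ref{leq5}(a) by invoking strict regularity. Since we are now in the simply-laced setting (type $D$ or $E$, as assumed in the later sections), every root has the same length, so $\beta$ is automatically a long root of $\alpha$-height~$1$. By Corollary~\ref{cbetasr}, $x_\beta$ is then strictly regular.

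Once strict regularity is in hand, Lemma~\ref{leq5}(a) immediately gives the explicit formula
\[
x_\beta x_\beta x_\gamma = \langle x_\gamma, x_\beta\rangle\, x_\beta,
\]
and the whole question collapses to determining when the scalar $\langle x_\gamma, x_\beta\rangle$ vanishes. By the definition of the bilinear form, $\langle x_\gamma, x_\beta\rangle\, x_\rho = [x_\gamma, x_\beta]$, which lies in $\g_2 = F x_\rho$ because $\beta$ and $\gamma$ both have $\alpha$-height~$1$. This bracket is nonzero precisely when $\beta+\gamma$ is a root, and any root of $\alpha$-height~$2$ must be $\rho$ itself. Hence $\langle x_\gamma, x_\beta\rangle = 0$ unless $\beta+\gamma = \rho$, which gives the conclusion.

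There is essentially no obstacle: the only point that needs to be flagged is that the appeal to Corollary~\ref{cbetasr} requires $\beta$ to be a long root, and this is guaranteed here only because we have restricted to the simply-laced case. In the non-simply-laced setting one would instead have to argue directly from Lemma~\ref{lqsum} and Corollary~\ref{ctworho}, pairing $x_\beta x_\beta x_\gamma$ against basis vectors $x_\delta$ and checking that the height constraint $\delta + 2\beta + \gamma = 2\rho$ cannot be satisfied by a root $\delta$ of $\alpha$-height~$1$ when $\beta + \gamma \ne \rho$; but that digression is unnecessary for the present purposes.
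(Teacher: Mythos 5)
Your proof is correct and follows exactly the paper's own argument: invoke Corollary~\ref{cbetasr} to get strict regularity of $x_\beta$, apply \eqref{eq5a} to obtain $x_\beta x_\beta x_\gamma = \gen{x_\gamma,x_\beta}x_\beta$, and observe that the bilinear form vanishes unless $\beta+\gamma=\rho$. Your remark that the appeal to Corollary~\ref{cbetasr} needs $\beta$ to be long (automatic in the simply-laced setting assumed in these later sections) is a fair point of care that the paper leaves implicit.
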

\begin{proof}
Since $x_\beta$ is strictly regular (Corollary~\ref{cbetasr}),
\eqref{eq5a} gives $x_\beta x_\beta x_\gamma = \gen{x_\gamma, x_\beta} x_\beta$.
As $\gen{x_\gamma, x_\beta}$ is zero unless $\beta+\gamma=\rho$, the result follows.
\end{proof}

\begin{prop}\label{porbits}
In the cases where there are five $G_0$-orbits in $\g_1$, namely for
$\g$ of type $E_6$, $E_7$ or $E_8$, the orbits are characterized as follows:
\begin{itemize}
	\item $x$ is in orbit~$0$ iff $x=0$,
	\item $x$ is in the closure of orbit~$1$ iff $xxy\in Fx$ for all $y\in\g_1$,
	\item $x$ is in the closure of orbit~$2$ iff $xxx=0$,
	\item $x$ is in the closure of orbit~$3$ iff $q(x)=0$, and
	\item $x$ is in orbit~$4$ iff $q(x)\ne0$.
\end{itemize}
\end{prop}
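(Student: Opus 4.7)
The plan is to exploit the linear nesting $\overline{O_0} \subsetneq \overline{O_1} \subsetneq \overline{O_2} \subsetneq \overline{O_3} \subsetneq \overline{O_4} = \g_1$ of the five orbit closures (writing $O_k$ for the $G_0$-orbit of level $k$), which follows from the Borel--Tits theorem cited in Section~\ref{secprelim}. Each of the five conditions in the statement defines a $G_0$-invariant subset of $\g_1$ (using Lemma~\ref{lgoss} together with the evident invariance of the scalar-multiple and vanishing conditions under rescaling), and each is Zariski-closed or, in the last case, Zariski-open. A closed $G_0$-invariant subset must be a union of orbit closures, hence one of $\emptyset, \overline{O_0}, \ldots, \overline{O_4}$, so it suffices to evaluate each condition on a single representative of each orbit.

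The orbit 0 characterization is trivial, and the orbit 1 case is a rephrasing of Proposition~\ref{porbit1}: the condition ``$xxy \in Fx$ for all $y$'' says that $x$ is either strictly regular or zero, which is exactly $\overline{O_1}$. For the closure of orbit 2, take the representative $x = x_\alpha + x_\beta$ with $\alpha, \beta$ orthogonal long roots of $\alpha$-height~$1$. Expanding $xxx$ by trilinearity produces terms $x_\gamma x_\gamma x_\delta$ with $\gamma, \delta \in \{\alpha, \beta\}$, each of which vanishes by Lemma~\ref{laab} since $\gamma + \delta$ is never $\rho$: neither $2\alpha$ nor $2\beta$ is a root, and $(\alpha + \beta, \alpha + \beta) = 2(\rho, \rho)$ by orthogonality forces $\alpha + \beta \ne \rho$.

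To show $xxx \ne 0$ on orbit 3, take the representative $x = x_{\beta_1} + x_{\beta_2} + x_{\beta_3}$ and use Lemma~\ref{lorthotworho} to extend to a fourth orthogonal long root $\beta_4$. Then $\gen{x_{\beta_4}, xxx} = q(x_{\beta_4}, x, x, x)$; by Corollary~\ref{ctworho}, a summand in the trilinear expansion is nonzero only when $\beta_4 + \beta_j + \beta_k + \beta_l = 2\rho = \beta_1 + \beta_2 + \beta_3 + \beta_4$, and the linear independence of $\beta_1, \beta_2, \beta_3, \beta_4$ forces $(j,k,l)$ to be a permutation of $(1,2,3)$. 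The surviving six terms yield $6\,q(x_{\beta_1}, x_{\beta_2}, x_{\beta_3}, x_{\beta_4}) \ne 0$ by Lemma~\ref{labcd}. The same linear-independence argument, applied to the expansion of $q(x)$ into summands $q(x_{\beta_i}, x_{\beta_j}, x_{\beta_k}, x_{\beta_l})$ with indices in $\{1,2,3\}$, shows every term vanishes (no sum in the span of $\beta_1, \beta_2, \beta_3$ can equal $2\rho$), so $q(x) = 0$ on orbit 3; at the orbit 4 representative $x_\alpha + x_{\rho-\alpha}$, the computation following Lemma~\ref{lcase1} gives $q = 6 \ne 0$, simultaneously confirming $\overline{O_3} = \{q = 0\}$ and identifying its complement as orbit 4.

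The principal technical point is the nonvanishing of $xxx$ at the orbit 3 representative; rather than evaluating the triple-product element directly, I pair against $x_{\beta_4}$ to convert the question into one about the 4-linear form, which Proposition~\ref{pq} and Lemma~\ref{labcd} already control.
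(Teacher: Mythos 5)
Your proposal is correct and follows essentially the same route as the paper: reduce to orbit representatives via $G_0$-invariance and closedness of each condition, then evaluate using the representatives $\sum x_{\beta_i}$ for mutually orthogonal roots, pairing $xxx$ against $x_{\beta_4}$ to invoke Lemma~\ref{labcd}. The only cosmetic difference is that where you kill vanishing terms by Corollary~\ref{ctworho} plus linear independence of the orthogonal roots, the paper instead invokes Lemma~\ref{laab}; both are valid.
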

\begin{proof}
The statement for orbit~$1$ is Proposition~\ref{porbit1}.

The conditions for orbits $2$ and $3$ are invariant under the action of $\go$
and define closed sets, so it suffices to consider representatives of the orbits.
Let $\beta_1,\beta_2,\beta_3,\beta_4$ be four mutually
orthogonal roots of $\alpha$-height~$1$.

Choose $x=x_{\beta_1}+x_{\beta_2}$ as a representative of orbit~$2$.  The triple
product $xxx$ contains the terms $x_{\beta_1} x_{\beta_1} x_{\beta_1}$,
$x_{\beta_2} x_{\beta_2} x_{\beta_2}$, $x_{\beta_1} x_{\beta_1} x_{\beta_2}$ and
$x_{\beta_1} x_{\beta_2} x_{\beta_2}$.  All are zero by Lemma~\ref{laab}; thus
$xxx=0$.

Conversely, for $x=x_{\beta_1}+x_{\beta_2}+x_{\beta_3}$ in orbit~$3$, we have
$xxx=6 x_{\beta_1}x_{\beta_2}x_{\beta_3}$ since the other terms vanish by
Lemma~\ref{laab}.  Thus we have
$\gen{x_{\beta_4},xxx}=6 q(x_{\beta_1},x_{\beta_2},x_{\beta_3},x_{\beta_4})$,
which is not zero by Lemma~\ref{labcd}.
Hence $xxx\ne 0$.

For $x=x_{\beta_1}+x_{\beta_2}+x_{\beta_3}$ in orbit~$3$, all the terms arising when
$q(x,x,x,x)$ is expanded are zero:  some $x_{\beta_i}$ must be repeated, so we have
terms of the form $q(x_{\beta_i},x_{\beta_i},x_{\beta_j},x_{\beta_k})$ with $i,j,k$
not necessarily distinct; such a term equals
$\gen{x_{\beta_j},x_{\beta_i}x_{\beta_i}x_{\beta_k}}$, which is $0$ by Lemma~\ref{laab}.

Finally, the fourth orbit is represented by $x=x_\alpha + x_{\rho-\alpha}$
(\cite{rohrle}, Cor. 4.4).  By the remark following Lemma~\ref{lcase1}, we have
$q(x) = 6$; hence $q(x)\ne 0$ for any $x$ in orbit~$4$.
\end{proof}

A similar result applies for Lie algebras of type $D_n$, except that the elements
$x\in\g_1$ satisfying $xxx=0$ are those that belong to any of the level~$2$ orbits
or their closures.  As these orbits are each
represented by elements of the form $x_{\beta_1}+x_{\beta_2}$, but for different
choices of $\beta_1,\beta_2,\beta_3,\beta_4$, the proof goes through unchanged.

Krutelevich (\cite{krutelevich}, Definition $22$) defines the {\it rank} of
an element of Freudenthal triple system constructed from a cubic Jordan algebra
using characterizations which are nearly the same as those given in the
preceding proposition.  His definition of rank~$1$ differs from the
characterization of orbit~$1$; it is equivalent (apart from a different
convention on scalars) to \eqref{eq5a}.

\section{Related groups}\label{secgroups}

As in Ferrar (\cite{ferrar}, \S7), we define two subgroups of the
group of linear automorphisms of $\g_1$.  The first, $Q$, preserves
the quartic form on $\g_1$ up to a nonzero scalar factor, that is,
\[
Q = \{\eta\in\GL(\g_1) : \forall x\in \g_1, q(\eta(x)) = r q(x) \text{ for some }
r\in F^\times\}.
\]
We call $r$ the {\it ratio of $\,\eta$ in $Q$}.

Similarly, the elements of $B$ are those that preserve the bilinear form
up to a nonzero scalar:
\[
B = \{\eta\in\GL(\g_1) : \forall x,y\in \g_1, \gen{\eta(x),\eta(y)} = r \gen{x,y} \text{ for some } r\in F^\times\}.
\]
In this case, we call $r$ the {\it ratio of $\,\eta$ in $B$}.

\begin{lem}\label{lsrinvt}
The set of strictly regular elements is invariant under any $\eta\in\GL(\g_1)$
that preserves the quartic form.
\end{lem}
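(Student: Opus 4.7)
The plan is to reformulate strict regularity as a rank condition on a symmetric bilinear form built purely from $q$; invariance under any $q$-preserving map then becomes a one-line pullback-of-rank argument. For each $x \in \g_1$ define
\[
T_x(y,z) := q(x,x,y,z),
\]
a symmetric bilinear form on $\g_1$ whose construction uses only the quartic form. I claim that for $x \ne 0$, $x$ is strictly regular if and only if $T_x$ has rank at most one.

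The forward implication is immediate from Lemma~\ref{leq5}: strict regularity of $x$ gives $T_x(y,z) = \gen{y,x}\gen{z,x}$, and since $x \ne 0$ and $\gen{-,-}$ is nondegenerate (Lemma~\ref{lnondeg}), the linear form $\gen{-,x}$ is nonzero, so $T_x$ has rank exactly one. For the converse, suppose $T_x$ has rank at most one. By the usual factorization of a rank-one symmetric bilinear form (write it as a product of two linear forms and use symmetry to force the factors to be proportional), $T_x(y,z) = c\,\phi(y)\phi(z)$ for some $c \in F$ and linear form $\phi$. Using nondegeneracy of $\gen{-,-}$, write $\phi(y) = \gen{y,v}$ for a unique $v \in \g_1$. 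The defining property of the triple product then yields
\[
\gen{z,xxy} = q(z,x,x,y) = c\gen{y,v}\gen{z,v},
\]
so $xxy = c\gen{y,v}\,v$, and in particular $xx\g_1 \subseteq Fv$. If $xx\g_1 = 0$ then Lemma~\ref{lxxy} forces $x = 0$; otherwise $xx\g_1 = Fv$ is one-dimensional and $x$ is strictly regular by Proposition~\ref{prank1}.

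With the characterization in hand the lemma is a formality. Suppose $\eta \in \GL(\g_1)$ preserves $q$, either exactly or up to a scalar ratio $r \in F^\times$. Polarizing the identity $q(\eta x) = r\,q(x)$ yields $q(\eta x_1,\eta x_2,\eta x_3,\eta x_4) = r\,q(x_1,x_2,x_3,x_4)$, and therefore $T_{\eta x}(\eta y,\eta z) = r\,T_x(y,z)$. Since $\eta$ is a linear isomorphism and $r \ne 0$, pullback by $(\eta,\eta)$ together with multiplication by $r$ preserves rank, so $T_{\eta x}$ and $T_x$ have the same rank. If $x$ is strictly regular then $T_x$ has rank one, hence so does $T_{\eta x}$, and because $\eta x \ne 0$ we conclude that $\eta x$ is strictly regular.

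The main obstacle is the reformulation step: strict regularity is defined through the triple product, which uses both $q$ and $\gen{-,-}$, whereas the hypothesis on $\eta$ involves only $q$. Once one recognizes that the rank of the $q$-derived form $T_x$ already detects strict regularity, the invariance under $\eta$ reduces to the elementary fact that the rank of a bilinear form is preserved under an invertible linear change of variables and nonzero scaling.
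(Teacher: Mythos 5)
Your proposal is correct and follows essentially the same route as the paper: both reduce strict regularity to the rank-one condition (via Proposition~\ref{prank1} and Lemma~\ref{lxxy}) and observe that rank one is detected by the rank of the bilinear form $(y,z)\mapsto q(x,x,y,z)$, which depends only on $q$ and is therefore preserved by any $q$-preserving element of $\GL(\g_1)$. Your version merely makes the rank bookkeeping slightly more explicit (and handles preservation of $q$ up to a scalar, which is harmless extra generality).
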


\noindent The following argument is adapted from Ferrar (\cite{ferrar}, Cor. 7.2).

\begin{proof}
Suppose $x\in\g_1$ is rank one; then $q(x,x,y,z) = \gen{z,xxy}$
is zero for all $y\in \g_1$ and all $z$ in a codimension-$1$ subspace.
Conversely, if $x\ne0$ and $q(x,x,y,z) = \gen{z,xxy}$ is zero for all $y\in\g_1$
and all $z$ in a codimension-$1$ subspace, then $xx\g_1$ lies in a 
$1$-dimensional space.  Since $xx\g_1$ is not zero (Lemma~\ref{lxxy}),
$x$ is rank one.  Thus this condition on the $4$-linear form characterizes the
rank one elements among the nonzero elements of $\g_1$.

Since any $\eta$ in $\GL(\g_1)$ is nonsingular, it preserves the dimension of
subspaces.  If $\eta$ preserves the quartic form (and hence the $4$-linear
form), then the condition on the $4$-linear form is true of $\eta(x)$ if it is
for $x$.  Thus $\eta$ maps rank one elements to rank one elements; by
Proposition~\ref{prank1}, it thus maps strictly regular
elements to strictly regular elements.
\end{proof}

\begin{prop}\label{pgroups}
$Q$ is a subgroup of $B$.
\end{prop}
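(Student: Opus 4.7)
The plan is to recover the bilinear form's scaling factor directly from the quartic's, using strictly regular elements as a bridge via Lemma~\ref{leq5}.

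Suppose $\eta\in Q$ with ratio $r\in F^\times$. By Lemma~\ref{lsrinvt}, $\eta$ sends strictly regular elements to strictly regular elements. Thus, for $x$ strictly regular and $y,z\in\g_1$ arbitrary, Lemma~\ref{leq5} applied to both $x$ and $\eta(x)$ gives
\[
\gen{\eta(y),\eta(x)}\gen{\eta(z),\eta(x)}=q(\eta(x),\eta(x),\eta(y),\eta(z))=r\,q(x,x,y,z)=r\gen{y,x}\gen{z,x}.
\]
Since $\gen{-,-}$ is nondegenerate (Lemma~\ref{lnondeg}), there is $z_0$ with $\gen{z_0,x}\ne0$. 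Setting $y=z_0$ yields $\gen{\eta(z_0),\eta(x)}^2=r\gen{z_0,x}^2$, so $r$ is a square in $F$; letting $y$ vary shows that
\[
\gen{\eta(y),\eta(x)}=c_x\gen{y,x}\qquad(y\in\g_1),
\]
where $c_x:=\gen{\eta(z_0),\eta(x)}/\gen{z_0,x}\in F^\times$ satisfies $c_x^2=r$.

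The next step is to show $c_x$ is independent of the strictly regular element $x$. If $x_1,x_2$ are strictly regular with $\gen{x_1,x_2}\ne0$, substituting $(x,y)=(x_1,x_2)$ and $(x,y)=(x_2,x_1)$ above and applying skew-symmetry gives $c_{x_1}\gen{x_2,x_1}=-\gen{\eta(x_1),\eta(x_2)}=-c_{x_2}\gen{x_1,x_2}=c_{x_2}\gen{x_2,x_1}$, so $c_{x_1}=c_{x_2}$. For arbitrary strictly regular $x_1,x_2$, I would use that by Proposition~\ref{porbit1} the strictly regular elements form the single $\go$-orbit~$1$, which is irreducible over $\bar F$; since this orbit spans $\g_1$ (Lemma~\ref{lspan}) and the form is nondegenerate, it cannot lie in either hyperplane $x_i^\perp$, so the complement of $x_1^\perp\cup x_2^\perp$ within it is a dense open subset. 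Picking any strictly regular $x_3$ in this intersection, the previous case yields $c_{x_1}=c_{x_3}=c_{x_2}$.

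Writing $s\in F^\times$ for this common value, $\gen{\eta(y),\eta(x)}=s\gen{y,x}$ holds for every $y\in\g_1$ and every strictly regular $x$; since strictly regular elements span $\g_1$ (Lemma~\ref{lspan}) and both sides are linear in $x$, the identity extends to all $x,y\in\g_1$. Thus $\eta\in B$ with ratio $s$, and since $Q$ and $B$ are evidently subgroups of $\GL(\g_1)$, this proves that $Q$ is a subgroup of $B$. The main obstacle is exactly the constancy of $c_x$: the pointwise extraction only determines $c_x$ up to the sign ambiguity in $c_x^2=r$, and resolving this globally is where the irreducibility of orbit~$1$ together with the span property is indispensable.
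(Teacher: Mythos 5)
Your proof is correct and takes essentially the same route as the paper's: both use Lemma~\ref{lsrinvt} together with \eqref{eq5b} to obtain $\gen{\eta(x),\eta(y)}=\pm s\gen{x,y}$ with $s^2=r$ for each strictly regular $x$, then exploit the connectedness/irreducibility of orbit~$1$ to fix the sign uniformly, and finish via the spanning property of Lemma~\ref{lspan}. The only cosmetic difference is in the sign step, where you chain through a third strictly regular element lying off both hyperplanes, while the paper partitions the strictly regular elements into two relatively closed subsets and invokes connectedness directly.
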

\begin{proof}
Let $\eta$ be an element of $Q$.
To show that $\eta$ preserves $\gen{x,y}$ up to a scalar factor, it suffices
to show it for all $x$ in a spanning set, such as the strictly regular
elements (Lemma~\ref{lspan}), and all $y\in\g_1$.

By \eqref{eq5b}, for $x$ strictly regular and any $y\in\g_1$ we have
$q(x,x,y,y) = \gen{x,y}^2$.  By Lemma~\ref{lsrinvt}, $\eta(x)$ is also strictly regular,
so
\[
\gen{\eta(x),\eta(y)}^2 = q(\eta(x),\eta(x),\eta(y),\eta(y))
=r\cdot q(x,x,y,y) = r \gen{x,y}^2,
\]
where $r$ is the ratio of $\eta$ in $Q$.
Thus $r$ is a square, say $r=s^2$; we then have $\gen{\eta(x),\eta(y)} = \pm s \gen{x,y}$.
The choice of sign does not depend on $y$, since for any $y_1,y_2\in\g_1$ we have
$\pm s \gen{x,y_1+y_2} = \gen{\eta(x),\eta(y_1+y_2)} = \pm s \gen{x,y_1} \pm s \gen{x,y_2}$,
so the signs must be the same whenever the bilinear forms are nonzero.
Let us say that $x$ is associated with $s$ if $\gen{\eta(x),\eta(y)} = s \gen{x,y}$
for all $y\in\g_1$, or that $x$ is associated with $-s$ otherwise.

The set of strictly regular elements associated to $s$ (resp., to $-s$) is a relatively
closed subset of the set of all strictly regular elements, and the set of strictly
regular elements is the disjoint union of these two sets.  However, since the set of
strictly regular elements is an orbit under the action of the connected set $\go$
(Proposition~\ref{porbit1}), it is connected.  Thus all strictly regular elements
are associated to the same square root of $r$. 
\end{proof}

\begin{cor}\label{corthoinvt}
Any element $\eta\in \GL(\g_1)$ that stabilizes the quartic form also preserves
orthogonality.
\end{cor}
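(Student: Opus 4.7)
The plan is essentially immediate from the preceding proposition, and I would set it up that way rather than developing new machinery. If $\eta \in \GL(\g_1)$ stabilizes the quartic form, then in particular $q(\eta(x)) = 1 \cdot q(x)$ for every $x \in \g_1$, so $\eta$ lies in the group $Q$ (with ratio $1$). This is the only input we need to invoke Proposition \ref{pgroups}.

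Next I would apply Proposition \ref{pgroups}, which states that $Q \subseteq B$. Consequently $\eta \in B$, so by the definition of $B$ there exists some $r \in F^\times$ with $\gen{\eta(x),\eta(y)} = r\gen{x,y}$ for all $x,y \in \g_1$. (Concretely, tracing through the proof of Proposition \ref{pgroups}, $r$ is a square root of the ratio of $\eta$ in $Q$, which is $1$, so in fact $r = \pm 1$; but we do not need this refinement.)

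Finally, since $r$ is a nonzero scalar, the equality $\gen{\eta(x),\eta(y)} = r\gen{x,y}$ shows at once that $\gen{x,y} = 0$ if and only if $\gen{\eta(x),\eta(y)} = 0$. That is exactly what it means for $\eta$ to preserve orthogonality with respect to the skew-symmetric bilinear form, and we are done. There is no real obstacle here; the whole content was packed into Proposition \ref{pgroups}, and this corollary only records the obvious consequence that scaling a nondegenerate bilinear form by a nonzero factor cannot change which pairs of vectors are orthogonal.
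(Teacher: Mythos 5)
Your proposal is correct and follows exactly the same route as the paper: note that $\eta\in Q$ with ratio $1$, invoke Proposition~\ref{pgroups} to conclude $\eta\in B$ with some nonzero ratio, and observe that scaling by a nonzero constant preserves vanishing of the bilinear form. Nothing is missing.
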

\begin{proof}
If $\eta$ stabilizes the quartic form, it is in $Q$ (with ratio $1$);
thus it is in $B$ (with ratio $\pm 1$).
Therefore, for any $x,y\in \g_1$, we have $\gen{x,y}=0$ if and only if
$\gen{\eta(x),\eta(y)} = 0$.
\end{proof}

\section{The stabilizer of the quartic form: $G = E_8$}\label{secE8}

Suppose that $G$ is of type $E_8$ and $\g$ is thus the Lie algebra $E_8$,
which has dimension $248$ (\cite{bourbaki}, \S VI.4.10).
In this case the simple root $\alpha$ is, in the labeling of \cite{bourbaki},
$\alpha_8$.  The root subspaces within $\g_0$ are then generated by the
$x_\beta$ where $\beta$ is a root of $\alpha$-height~$0$; that is, a root
of the Lie algebra $E_7$.  There are $126$ such roots
(\cite{bourbaki}, \S VI.4.11); combined with the
$8$-dimensional torus of $E_8$, we have $\dim \g_0 = 134$.  Thus $\go$ is
the subgroup $E_7$ plus a one-dimensional torus, so $\goss$ is $E_7$.

Since $\dim \g_{-2} = \dim \g_2 = 1$, we have $\dim \g_{-1} = \dim \g_1 = 56$.
We see that the action of $\goss$ on $\g_1$ is irreducible since the dense orbit
cannot be contained in any proper subspace, so $\g_1$ is the well-known
minuscule representation of $E_7$.

It has been known since Cartan, in the case where $F=\C$, that there is
a quartic form on the minuscule representation, $V$, of $E_7$
that is invariant under $E_7$ (\cite{cartan1}, p.~274\footnote{It should be noted
that the quartic form is given incorrectly by Cartan; the error seems to have been
first observed by Freudenthal (\cite{freud53}).}).  Freudenthal (\cite{freud53})
later found that the subgroup of $\GL(V)$ stabilizing this quartic form and a
skew-symmetric bilinear form is exactly $E_7$ in this case.  In this section we
use our techniques to establish the subgroup stabilizing the quartic form and the
subgroup stabilizing both forms in our more general context.

\begin{thm}\label{tstabq}
For $G=E_8$, the subgroup of $\GL(\g_1)$ stabilizing the quartic form,
$\Stab(q)$, is generated by $E_7$ and $\mu_4$, where $\mu_4$ is the
group of the fourth roots of unity.
\end{thm}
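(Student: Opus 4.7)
The inclusion $E_7\cdot\mu_4\subseteq\Stab(q)$ is immediate: $\goss=E_7$ preserves $q$ by Lemma~\ref{lgoss}, while a scalar $c\cdot\mathrm{id}$ sends $q$ to $c^4q$, which equals $q$ exactly when $c\in\mu_4$. For the reverse inclusion, let $\eta\in\Stab(q)$. By Proposition~\ref{pgroups} (and inspecting its proof), $\eta$ lies in $B$ with some ratio $s$ satisfying $s^2=1$, so $s\in\{\pm 1\}$. The assignment $\eta\mapsto s$ is a group homomorphism $\phi\colon\Stab(q)\to\mu_2$, and a primitive fourth root of unity $i\in\mu_4$ (when $i\in F$) satisfies $\phi(i)=-1$. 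It therefore suffices to prove $\ker\phi=E_7$, for then, noting that $-1\in E_7$, one has $\Stab(q)=E_7\cdot\mu_4$.

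To show $\ker\phi=\Stab(q)\cap\Stab(\langle-,-\rangle)=E_7$, take $\eta$ preserving both forms (hence also the triple product). By Lemma~\ref{lsrinvt}, $\eta$ permutes orbit~$1$. Over the algebraic closure, $E_7$ acts transitively on orbit~$1$: the $E_7$-stabilizer of $x_\alpha$ is $E_6\cdot U_P$, where $U_P$ is the unipotent radical of the maximal parabolic $P$ fixing the line $Fx_\alpha$, of codimension $28=\dim\text{orbit}~1$. Composing $\eta$ with an element of $E_7$, we may assume $\eta(x_\alpha)=x_\alpha$. A parallel reduction uses that $\Stab_{E_7}(x_\alpha)$ acts transitively on the $27$-dimensional set of strictly regular elements $v$ with $\langle v,x_\alpha\rangle=\langle x_{\rho-\alpha},x_\alpha\rangle$ (the joint stabilizer of $x_\alpha$ and $x_{\rho-\alpha}$ in $E_7$ being the $78$-dimensional $E_6$); a further composition lets us assume $\eta(x_{\rho-\alpha})=x_{\rho-\alpha}$ as well.

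With $\eta$ now fixing both $x_\alpha$ and $x_{\rho-\alpha}$, we invoke the eigenspace decomposition of Section~\ref{seceigen}: $\eta$ preserves the one-dimensional extremal eigenspaces $Fx_\alpha$ and $Fx_{\rho-\alpha}$ of $\ad h$, and using preservation of the bilinear form (which pairs the eigenspace $V_i$ with $V_{-i}$) together with \eqref{eq5b} applied to $x=x_\alpha$ and $x=x_{\rho-\alpha}$, one sees $\eta$ respects the four-fold decomposition $\g_1=V_{-3}\oplus V_{-1}\oplus V_1\oplus V_3$. The cubic form $N(y_1,y_2,y_3):=q(x_\alpha,y_1,y_2,y_3)$ on the $27$-dimensional subspace $V_1$ is (a scalar multiple of) the Jordan cubic norm on the exceptional Jordan algebra, whose isometry group (together with the pairing $V_1\otimes V_{-1}\to F$ induced from $\langle-,-\rangle$) is exactly $E_6$. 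Hence $\eta|_{V_1}\in E_6$, its action on $V_{-1}$ is determined by duality, and $\eta\in E_6\subset E_7$.

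The main obstacle is the final step: identifying the cubic form $N$ on $V_1$ as the exceptional Jordan norm and invoking the classical characterization of $E_6$ as its isometry group. This essentially re-derives Cartan--Freudenthal's theorem within our root-system framework, and depends on the structure of the triple product codified in identity~\eqref{eqtriple}.
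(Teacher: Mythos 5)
Your overall strategy---normalize so that the element fixes $x_\alpha$ and $x_{\rho-\alpha}$, then reduce to the statement that the stabilizer of a cubic form on a $27$-dimensional space is $E_6$---is the same as the paper's, and your reordering (proving $\Stab(q,\gen{-,-})=E_7$ first via the homomorphism $\phi$ and deducing the theorem from $-1\in E_7$) is a harmless variant. But three steps have genuine gaps. First, your normalization rests on transitivity claims (of $E_7$ on the cone of strictly regular elements, and of $\Stab_{E_7}(x_\alpha)$ on the strictly regular $v$ with $\gen{v,x_\alpha}$ fixed and nonzero) supported only by dimension counts; a dimension count shows at best that an orbit is dense in the relevant set, not that it exhausts it, and neither statement is among the orbit facts established in the paper. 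The paper avoids this entirely: it normalizes on the \emph{dense} orbit, where projective transitivity of $\goss$ is already known, so that the adjusted element fixes $v=x_\alpha+x_{\rho-\alpha}$, and then invokes Lemma~\ref{lunique} (uniqueness of the decomposition of $v$ into two strictly regular summands) to conclude that it either fixes or swaps $x_\alpha$ and $x_{\rho-\alpha}$, the swap being realized by an explicit element of $i\goss$.

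Second, your argument that $\eta$ respects the four-fold eigenspace decomposition does not work as stated: the decomposition is defined via $\ad h$ for $h=h_{\rho-\alpha}-h_\alpha$, and an $\eta$ fixing $x_\alpha,x_{\rho-\alpha}$ and preserving both forms has no a priori reason to commute with $\ad h$. Preservation of $\gen{-,-}$ only gives invariance of $W=A\oplus B$, the orthogonal complement of $Fx_\alpha\oplus Fx_{\rho-\alpha}$, and \eqref{eq5b} with $x=x_\alpha$ reads $q(x_\alpha,x_\alpha,y,z)=\gen{y,x_\alpha}\gen{z,x_\alpha}$, which vanishes identically for $y,z\in W$ and so cannot separate $A$ from $B$. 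What is needed is a \emph{cubic} condition: the paper (Lemma~\ref{lstabqvaux}) characterizes $A$ inside $W$ as $\{x: q(x_{\rho-\alpha},x,y,z)=0\text{ for all }y,z\in W\}$, which is manifestly $\eta$-invariant. Third, you explicitly leave open the identification of the cubic on the $27$-dimensional piece and the fact that its stabilizer is $E_6$; the paper closes this without re-deriving Cartan--Freudenthal, by showing $f_1\neq 0$ (via Lemma~\ref{labcd}), invoking uniqueness up to scalar of the $E_6$-invariant cubic on the minuscule representation (\cite{satokimura}, \cite{seshadri}) so that no explicit identification with the Jordan norm is required, and citing \cite{springerveldkamp}, Theorem 7.3.2 for the stabilizer. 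These three points must be filled in for your proof to stand.
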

\begin{proof}
First, $E_7 = \goss$ stabilizes the quartic form by Lemma~\ref{lgoss}.
Also, for $k\in\mu_4$, we have $q(k\cdot x) = k^4 q(x) = q(x)$ for
any $x\in\g_1$, so $\mu_4$ also preserves the quartic form.  Thus
$\Stab(q)$ contains the group generated by $E_7$ and $\mu_4$.

To show the reverse inclusion, suppose $g\in\Stab(q)$.
Let $v = x_\alpha + x_{\rho-\alpha}$.  Since $v$ is in the dense orbit, we have
by Proposition~\ref{porbits} that $q(v)\ne0$ and also, since $q(g\cdot v) = q(v) \ne 0$,
that $g\cdot v$ is in the dense orbit.  Thus there exists some $z\in E_7$
such that $zg\cdot v=kv$ for some $k\in F^\times$.  Let $g'=zg$; then $g'$ is also
in $\Stab(q)$, so $q(v)=q(g'\cdot v) = k^4 q(v)$.  Thus $k\in\mu_4$.  Let
$g''=k^{-1}g'$, then $g''\cdot v=v$, so $g''$ both stabilizes $q$ and fixes $v$.

Lemma~\ref{lstabqv} below, which is the key to the proof, shows that
any element that stabilizes $q$ and fixes $v$ is in the group generated by $E_7$
and $\mu_4$; thus $g''$ is in that group and so is $g$.
\end{proof}

Before completing the proof, we use the preceding theorem to determine
the group that stabilizes both $q$ and the bilinear form $\gen{-,-}$.

\begin{cor}\label{cstabqb}
For $G=E_8$, the subgroup of $\GL(\g_1)$ stabilizing both the quartic
form and the skew-symmetric bilinear form, $\Stab(q,\gen{-,-})$, is $E_7$.
\end{cor}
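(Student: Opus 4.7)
The plan is to leverage Theorem~\ref{tstabq}, which identifies $\Stab(q)$ with the group generated by $E_7$ and $\mu_4$. Since $\Stab(q,\gen{-,-}) \subseteq \Stab(q)$, the task reduces to determining which elements of $\mu_4 \cdot E_7$ also preserve the bilinear form. The inclusion $E_7 \subseteq \Stab(q,\gen{-,-})$ is already given by Lemma~\ref{lgoss}.

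For the reverse inclusion, I would take any $g \in \Stab(q,\gen{-,-})$ and use Theorem~\ref{tstabq} to write $g = \zeta z$ with $\zeta \in \mu_4$ and $z \in E_7$. Since $z$ preserves $\gen{-,-}$ and $\zeta$ acts as a scalar on $\g_1$, one computes
\[
\gen{x,y} = \gen{gx,gy} = \zeta^2 \gen{zx,zy} = \zeta^2 \gen{x,y}
\]
for all $x,y\in\g_1$; nondegeneracy of the bilinear form (Lemma~\ref{lnondeg}) then forces $\zeta^2 = 1$, so $\zeta \in \{\pm 1\}$.

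To conclude, one needs $\pm I \in E_7$: the identity case is immediate, and $-I$ lies in $E_7$ because the nontrivial central element of $E_7$ acts as $-I$ on the $56$-dimensional minuscule representation $\g_1$. With this, $g = \zeta z \in E_7$. The only genuine subtlety, and hence the main obstacle, is verifying that $-I$ is indeed in the image of $E_7 = \goss$ in $\GL(\g_1)$; everything else follows mechanically from Theorem~\ref{tstabq} together with the observation that scalar multiplication by $\zeta$ rescales the skew-symmetric bilinear form by $\zeta^2$.
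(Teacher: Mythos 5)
Your proposal is correct and follows essentially the same route as the paper: both use Theorem~\ref{tstabq} to reduce to the cosets of $E_7$ in $\gen{E_7,\mu_4}$, both observe that a scalar $\zeta$ rescales $\gen{-,-}$ by $\zeta^2$ to rule out $\zeta=\pm i$, and both rely on the fact that the nontrivial central element of $E_7$ acts as $-I$ on the minuscule representation (which the paper justifies via Steinberg's description of the center as $\Hom(L_1/L_0,F^\times)$). You correctly identified the inclusion $-I\in E_7$ as the one point requiring justification.
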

\begin{proof}
The previous proposition and the fact that $E_7$ stabilizes both forms
yield the following containments:
\[
E_7 \subseteq \Stab(q,\gen{-,-}) \subseteq \Stab(q) = \gen{E_7, \mu_4}.
\]
Let $L_0$ be the root lattice of $E_7$ and $L_1$ its weight lattice.
Then $L_1/L_0$ is a group with two elements
(see, for example, \cite{humlie}, \S 13.1 or \cite{steinberg}, p.~45).  
From \cite{steinberg}, p.~45, the center of $E_7$ is isomorphic to
$\Hom(L_1/L_0,F^\times)$, so the center of $E_7$ consists of the
elements $1$ and $-1$.  Thus
the group $\gen{E_7, \mu_4}$ has two components:  $E_7$ and $i E_7$, where
$i$ is a primitive fourth root of unity.  However, $i$ is not in
$\Stab(q,\gen{-,-})$ since $\gen{ix,iy} = -\gen{x,y}$ for any $x,y\in\g_1$.
Therefore $\Stab(q,\gen{-,-})=E_7$.
\end{proof}

In the remainder of this section we complete the proof of
Theorem~\ref{tstabq} by showing that we can adjust an element
that stabilizes $q$ and fixes $x_\alpha+x_{\rho-\alpha}$ to
produce one that preserves even more structure.  We will use
the same approach in the next section, so we define subspaces
of $\g_1$ and forms on them in a way that is valid when $\g$ is any
Lie algebra of type $D$ or $E$.

Let $A$ and $B$ be the eigenspaces in $\g_1$ described in
Proposition~\ref{peigen} corresponding to the eigenvalues
$+1$ and $-1$, respectively.  Thus $A$ is generated by elements $x_\beta$ where
$\beta$ has $\alpha$-height~$1$ and $\gen{\alpha,\beta}=0$,
whereas $B$ is generated by elements $x_\gamma$ where $\gamma$ has
$\alpha$-height~$1$ and $\gen{\alpha,\gamma}=1$.

We define the cubic forms $f_1$ on $A$ and $f_2$ on $B$ as follows:
\[
f_1(a) = \frac16 q(x_\alpha,a,a,a),\qquad f_2(b)=\frac16 q(x_{\rho-\alpha},b,b,b).
\]

\begin{lem}\label{lstabqvaux}
If $g\in\GL(\g_1)$ is an element that stabilizes the quartic form and fixes
$v=x_\alpha+x_{\rho-\alpha}$, then there is an element $g'$ that
preserves the spaces $A$ and $B$ and stabilizes $\gen{-,-}$ and the
cubic forms defined on $A$ and $B$ such that $g'g^{-1}\in\gen{\goss,\mu_4}$.
\end{lem}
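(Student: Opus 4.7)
The plan is to first reduce to the case where $g$ fixes both $x_\alpha$ and $x_{\rho-\alpha}$---after which preservation of $\gen{-,-}$ and of $A \oplus B$ are automatic---and then to separate $A$ and $B$ using the eigenspace structure of the element $h = h_{\rho-\alpha} - h_\alpha$ from Proposition~\ref{peigen}.

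By Lemma~\ref{lsrinvt}, $g$ sends strictly regular elements to strictly regular elements. Hence $g(x_\alpha)$ and $g(x_{\rho-\alpha})$ are strictly regular with sum $v$, so Lemma~\ref{lunique} forces $g$ either to fix both $x_\alpha$ and $x_{\rho-\alpha}$ or to swap them. In the swap case, I would produce an element $s \in \gen{\goss, \mu_4}$ effecting the same swap: Corollary~\ref{cbetasr} (with the underlying transitivity of $\goss$ on $\alpha$-height-$1$ long root vectors) supplies an element of $\goss$ sending $x_\alpha$ to a scalar multiple of $x_{\rho-\alpha}$, and that scalar can be adjusted using $\mu_4$ to produce the required involution. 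Left-multiplying $g$ by $s^{-1}$, we may assume $g$ fixes both $x_\alpha$ and $x_{\rho-\alpha}$.

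Under this assumption, Proposition~\ref{pgroups} gives that $g$ preserves $\gen{-,-}$ up to $\pm 1$; the sign must be $+1$ since $\gen{g(x_\alpha), g(x_{\rho-\alpha})} = \gen{x_\alpha, x_{\rho-\alpha}} \ne 0$. Hence $g$ preserves $\gen{-,-}$ exactly, preserves the common orthogonal complement $A \oplus B = \{x_\alpha, x_{\rho-\alpha}\}^\perp$, and---being a symmetry of $q$ that fixes $x_\alpha$ and $x_{\rho-\alpha}$---preserves $f_1$ and $f_2$ as functions on $\g_1$.

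For the separation of $A$ and $B$, I would use that $A$ and $B$ are the $\pm 1$-eigenspaces of $\ad h$ on $A \oplus B$, where $h = h_{\rho-\alpha} - h_\alpha \in \goss$ by Proposition~\ref{peigen}. A direct $\alpha$-height calculation shows that $f_1$ and $f_2$, viewed as functions on $A \oplus B$, factor through the projections to $A$ and $B$ respectively. My plan is to combine these factorizations with the $g$-invariance of $f_1$ and $f_2$ to force the off-diagonal blocks of $g$ on $A \oplus B$ (the part of $g|_A$ landing in $B$, and of $g|_B$ landing in $A$) to vanish, if necessary after a further adjustment by an element of $\goss$ stabilizing $x_\alpha$ and $x_{\rho-\alpha}$. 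The main obstacle is this off-diagonal vanishing step: the combined constraints from preservation of $q$, $\gen{-,-}$, and the cubic forms, together with the adjustments available from $\gen{\goss, \mu_4}$, must be shown to force $g'$ to respect the $\ad h$-eigenspace decomposition.
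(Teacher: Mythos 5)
There is a genuine gap, and you have in fact flagged it yourself: the step that forces $g'$ to preserve $A$ and $B$ \emph{separately} (rather than just $W=A\oplus B$) is left as an unproven ``main obstacle,'' and nothing in the proposal supplies the missing argument. Your observation that $\tilde f_1(x)=\frac16 q(x_\alpha,x,x,x)$ factors through the projection $W\to A$ is correct (a term $q(x_\alpha,x_\beta,x_\gamma,x_\delta)$ is nonzero only if $\gen{\beta,\alpha}+\gen{\gamma,\alpha}+\gen{\delta,\alpha}=0$, forcing $\beta,\gamma,\delta$ to lie in $A$), but invariance of a cubic that factors through a projection does not by itself pin down the kernel of that projection, so ``the off-diagonal blocks vanish'' does not follow without more. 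The paper closes exactly this gap by giving an \emph{intrinsic} characterization: $A$ is the set of $x\in W$ with $q(x_{\rho-\alpha},x,y,z)=0$ for all $y,z\in W$ (i.e., the radical of the polarization of $\tilde f_2$), and dually for $B$. That set is manifestly preserved by $g'$, since $g'$ fixes $x_{\rho-\alpha}$, preserves $W$ and preserves $q$; the equality with $A$ is then verified by a root computation in one direction ($\gen{(\rho-\alpha)+\beta+\gamma+\delta,\alpha}\le 1<2$) and by Lemma~\ref{labcd} together with R\"ohrle's extension of orthogonal pairs to orthogonal quadruples in the other. No ``further adjustment by an element of $\goss$'' is needed or available at this stage; the decomposition is preserved on the nose.

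A secondary, repairable issue: in the swap case, producing $z\in\goss$ with $z\cdot x_\alpha=c\,x_{\rho-\alpha}$ via transitivity does not control $z\cdot x_{\rho-\alpha}$, and rescaling by $\mu_4$ only works if $c$ happens to be a fourth root of unity; so your construction does not actually yield an involution interchanging $x_\alpha$ and $x_{\rho-\alpha}$. The paper instead invokes \cite{garibaldi}, \S12.10, for an element of $i\goss$ that genuinely interchanges the two vectors. The first half of your argument (Lemma~\ref{lsrinvt} plus Lemma~\ref{lunique} to reduce to fixing or swapping, and the sign argument showing $\gen{-,-}$ is preserved exactly) matches the paper.
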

\begin{proof}
Let $g$ be an element that stabilizes $q$ and fixes $v$.  By Lemma~\ref{lsrinvt},
the action of $g$ takes strictly regular elements to strictly regular elements, so
$g\cdot x_\alpha$ and $g\cdot x_{\rho-\alpha}$ are strictly regular.  Since $g$ fixes $v$,
we have $v = g\cdot v = g\cdot x_\alpha + g\cdot x_{\rho-\alpha}$.  However, by
Lemma~\ref{lunique}, the expression of $v$ as a sum of two strictly regular elements is
unique, so $g$ must either fix both $x_\alpha$ and $x_{\rho-\alpha}$ or interchange them.
By \S 12.10 in \cite{garibaldi}, there is an element $z\in i \goss$ that interchanges $x_\alpha$
and $x_{\rho-\alpha}$; of course, such an element also stabilizes $q$.  Hence either
$g$ or $zg$ is an element that stabilizes $q$ and fixes $x_\alpha$ and $x_{\rho-\alpha}$;
call whichever element does so $g'$.  Thus we have $g'g^{-1}\in\gen{\goss,\mu_4}$.

Let $W$ be the subspace of $\g_1$ consisting of elements orthogonal to both
$x_\alpha$ and $x_{\rho-\alpha}$; by Corollary~\ref{corthoinvt}, $W$ is
invariant under $g'$.  All the basis elements $x_\beta$ with $\beta$ of
$\alpha$-height~$1$ except for $x_\alpha$ and $x_{\rho-\alpha}$ are in $W$,
and they form a basis of $W$.  Thus $W$ is the direct sum of the $+1$ and
$-1$ eigenspaces of Proposition~\ref{peigen}, the spaces we have named $A$ and $B$.

Let $A'$ be the subspace of elements $x\in W$ such that
$q(x_{\rho-\alpha},x,y,z)=0$ for all $y,z\in W$, and define a cubic
form on $A'$ by $\frac16 q(x_\alpha,x,x,x)$.  Clearly $g'$ preserves $A'$ and
stabilizes the cubic form.  We claim $A'$ is in fact $A$.

On the one hand, if $x_\beta$ is a basis element of the $+1$ eigenspace, then
we have $\gen{\rho-2\alpha,\beta}=1$.  Since $\gen{\rho,\beta}=1$,
it follows that $\gen{\alpha,\beta} = 0$.  By writing elements
$y,z\in W$ as linear combinations of the basis elements,
$q(x_{\rho-\alpha},x_\beta,y,z)$ expands into a linear combination
of terms of the form $q(x_{\rho-\alpha},x_\beta,x_\gamma,x_\delta)$
with $\gamma,\delta$ such that $\gen{\gamma,\alpha}$ and
$\gen{\delta,\alpha}$ are each either $0$ or $1$.
But then we cannot have $(\rho-\alpha) + \beta + \gamma + \delta = 2\rho$, since
$\gen{(\rho-\alpha) + \beta + \gamma + \delta,\alpha} =
-1 + 0 + \gen{\gamma,\alpha} + \gen{\delta,\alpha}$
is at most $1$, but $\gen{2\rho,\alpha} = 2$.  Hence all the terms
$q(x_{\rho-\alpha},x_\beta,x_\gamma,x_\delta)$ are zero, so
$x_\beta$ is in $A'$.  Thus $A\subseteq A'$.

Conversely, if $x\in W$ is not in $A$, then it has a nonzero component
involving some basis element $x_\beta$ with
$\gen{\beta,\alpha} = 1$.  Thus $\gen{\rho-\alpha,\beta} = 0$, so
$\rho-\alpha$ and $\beta$ are orthogonal roots of $\alpha$-height~$1$.
It follows from Lemma $2.4$ in \cite{rohrle} that any such pair
of roots can be extended to a set of four mutually orthogonal roots, say
$\rho-\alpha, \beta, \gamma, \delta$.  By Lemma~\ref{labcd},
$q(x_{\rho-\alpha},x_\beta,x_\gamma,x_\delta)$ is then nonzero, and thus
$q(x_{\rho-\alpha},x,x_\gamma,x_\delta)$ is also nonzero, since no other
component of $x$ contributes to the value of the form.  Thus $x$ is not
in $A'$.  Therefore $A'\subseteq A$.

Interchanging the roles of $x_\alpha$ and $x_{\rho-\alpha}$, we similarly define
$B'$ to be the subspace of elements $x\in W$ such that $q(x_\alpha,x,y,z)=0$ for all
$y,z\in W$, and define a cubic form on $B'$ by $\frac16 q(x_{\rho-\alpha},x,x,x)$.
As before, $g'$ preserves $B'$ and stabilizes the cubic form, and the same argument,
{\it mutatis mutandis}, shows that $B' = B$.

As in the proof of Corollary~\ref{corthoinvt}, since $g'$ stabilizes
the quartic form, it preserves the bilinear form up to a scalar factor
of $\pm1$.  However, since $g'$ fixes $x_\alpha$ and $x_{\rho-\alpha}$ and
$\gen{x_\alpha, x_{\rho-\alpha}}\ne 0$, the scalar factor is $1$;
thus $g'$ preserves $\gen{-,-}$.
\end{proof}

We now apply the preceding general lemma to the specific case $G = E_8$,
thereby completing the proof of Theorem~\ref{tstabq}.

\begin{lem}\label{lstabqv}
When $G = E_8$, the group that stabilizes the quartic form and fixes the element
$v=x_\alpha+x_{\rho-\alpha}$
is contained in the group generated by $E_7$ and $\mu_4$.
\end{lem}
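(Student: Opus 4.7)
The plan is to apply Lemma~\ref{lstabqvaux} to reduce the problem to a rigid situation and then invoke the classical description of the stabilizer of the Albert cubic. Given $g\in\Stab(q)$ with $g\cdot v=v$, Lemma~\ref{lstabqvaux} produces $g'$ with $g'g^{-1}\in\gen{\goss,\mu_4}=\gen{E_7,\mu_4}$ such that $g'$ fixes $x_\alpha$ and $x_{\rho-\alpha}$, preserves the eigenspaces $A$ and $B$, stabilizes $\gen{-,-}$, and stabilizes the cubic forms $f_1$ and $f_2$. It therefore suffices to show $g'\in E_7$.

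In the $E_8$ case $\dim A=\dim B=27$. The pointwise stabilizer in $\goss=E_7$ of the pair $\{x_\alpha,x_{\rho-\alpha}\}$ is an $E_6$ subgroup of $E_7$: it commutes with the element $h=h_{\rho-\alpha}-h_\alpha$ giving the eigenspace decomposition of Section~\ref{seceigen}, so it preserves $A$ and $B$, and it acts on each as the $27$-dimensional minuscule representation. This $E_6$ automatically stabilizes $f_1$ and $f_2$, since those forms are defined only in terms of the Lie bracket and the $E_6$-fixed vectors $x_\alpha$ and $x_{\rho-\alpha}$.

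The crucial step is to invoke the classical fact that the full stabilizer in $\GL(A)$ of the cubic form $f_1$, which is (up to isomorphism) the norm form of an Albert algebra, is precisely this $E_6$. Granting this, $g'|_A$ coincides with the action of some $h\in E_6\subset E_7$; replacing $g'$ by $h^{-1}g'$, we may assume $g'$ acts as the identity on $A$ while still fixing $x_\alpha,x_{\rho-\alpha}$ and preserving $\gen{-,-}$. Because $\beta+\gamma=\rho$ forces $\gen{\alpha,\beta}+\gen{\alpha,\gamma}=1$, the restrictions of $\gen{-,-}$ to $A\times A$ and $B\times B$ both vanish, so $\gen{-,-}$ restricts to a perfect pairing $A\times B\to F$. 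Combined with $g'|_A=\mathrm{id}_A$ and preservation of this pairing, the identity $\gen{a,g'b}=\gen{a,b}$ forces $g'|_B=\mathrm{id}_B$. Hence $g'=1$, which gives $g\in\gen{E_7,\mu_4}$ as required.

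The main obstacle is the appeal to Albert algebra theory in the third step. In a fully self-contained treatment, one would either have to identify the specific cubic form $f_1$ constructed here from the Lie bracket with the norm of an Albert algebra (so as to quote the known stabilizer result), or else argue directly using root-system methods, as in the rest of the paper, that no element of $\GL(A)$ outside the $E_6$ already acting on $A$ can preserve $f_1$.
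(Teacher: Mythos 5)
Your proposal is correct and follows essentially the same route as the paper: reduce via Lemma~\ref{lstabqvaux} to an element that preserves $A$ and its cubic form, and identify the stabilizer of that cubic with $E_6$. The step you flag as the main obstacle is precisely what the paper supplies: it checks that $f_1$ is a nonzero $E_6$-invariant cubic on the $27$-dimensional minuscule representation of $E_6$, invokes \cite{satokimura} (with \cite{seshadri} for general characteristic) to conclude that the invariant cubic is unique up to scalar, and then quotes \cite{springerveldkamp}, Theorem~7.3.2, for the fact that the full stabilizer of that cubic in $\GL(A)$ is $E_6$; your perfect-pairing argument recovering $g'|_B$ from $g'|_A$ is actually spelled out more explicitly than in the paper, which passes directly from ``$g'$ stabilizes $f_1$ on $A$'' to $g'\in E_6$.
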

\begin{proof}
We begin by making some observations about the action of the subgroup $E_6$
of $E_7$ on $\g$.  Since $E_7$ fixes $x_\rho$ and $x_{-\rho}$, as shown
in the proof of Lemma~\ref{lgoss}, $E_6$ certainly does as well.  Similarly,
for any basis element of the Lie algebra $E_6$,
i.e., any $x_\beta$ where $\beta$ is a root orthogonal to both $\rho$ and $\alpha$
or any $h_\gamma$ where $\gamma$ is a simple root other than $\alpha=\alpha_8$
or $\alpha_7$, we have $[x_\beta,x_\alpha]=0$ and $[h_\gamma,x_\alpha]=0$
and likewise $[x_\beta,x_{\rho-\alpha}]=0$ and $[h_\gamma,x_{\rho-\alpha}]=0$.
Thus elements of the group $E_6$ fix $x_\alpha$ and $x_{\rho-\alpha}$.

In the proof of Lemma~\ref{lstabqvaux} it was shown that $A$ could be characterized
in terms of $x_\alpha$, $x_{\rho-\alpha}$, orthogonality and the quartic form;
since all these are preserved by elements of $E_6$, $A$ is invariant under $E_6$.

Since $\g_1$ is $56$-dimensional, it follows from Proposition~\ref{peigen}\
that $A$ is $27$-dimensional.  It is known (\cite{mckaypatera}, p.301) that
the minuscule representation of $E_7$ decomposes into the sum of four
representations of $E_6$, two $1$-dimensional and two $27$-dimensional.
Thus $A$ is a $27$-dimensional minuscule representation of $E_6$.

The cubic form $f_1$ on $A$ is defined in terms of $q$ and $x_\alpha$,
so it is stabilized by $E_6$.  However, by \cite{satokimura}, pp.~25--27, we know that the
$E_6$-invariant polynomials on $A$ are generated by a cubic, at least in
characteristic zero.  By \cite{seshadri}, this holds in general characteristic.
Thus $f_1$ is the unique (up to scalar factor) $E_6$-invariant cubic form on $A$,
provided that it is not zero.

To show that $f_1$ is nonzero, take $\alpha$, $\beta$, $\gamma$, $\delta$ to be
four mutually orthogonal roots of $\alpha$-height~$1$.  As in the proof of
Proposition~\ref{porbits}, for $x = x_\beta + x_\gamma + x_\delta$ we have
$xxx = 6 x_\beta x_\gamma x_\delta$, so $f_1(x) = \frac16 q(x_\alpha,x,x,x) =
q(x_\alpha, x_\beta, x_\gamma, x_\delta)$, which is not zero by Lemma~\ref{labcd}.

By Lemma~\ref{lstabqvaux}, if $g$ is an element
that stabilizes $q$ and fixes $v$, there is a $g'\in g\gen{E_7,\mu_4}$
such that $A$ is invariant under $g'$ and the cubic form $f_1$ is
stabilized by $g'$.  That is, $g'$ is in the stabilizer of the $E_6$-invariant
cubic form on the $27$-dimensional minuscule representation of $E_6$;
by \cite{springerveldkamp}, Theorem 7.3.2, that stabilizer
is $E_6$ itself.

Thus $g'\in E_6$; and therefore $g$ is in $\gen{E_7,\mu_4}$.
\end{proof}

\section{The stabilizer of the quartic form: $G = D_4$}\label{secD4}

In this section, we again consider the group stabilizing the quartic
form and the group stabilizing both the quartic and the bilinear
forms on $\g_1$, this time in the case $G=D_4$.

The diagram that results when $\alpha=\alpha_2$
is removed from the Dynkin diagram of $D_4$ consists of three
unconnected vertices; that is, it represents the Lie algebra which is
the product of three copies of $\mysl_2$.  Thus $\g_0$ is $10$-dimensional,
generated by the three pairs of roots $x_{\alpha_i}, x_{-\alpha_i}$
for $i=1,3,4$ and the four-dimensional Cartan subalgebra of $D_4$;
$\goss$ is thus $\SL_2^3$.  Since $D_4$ has dimension 28, there are 18
other roots; setting aside $\rho$ and $-\rho$, we see that
$\g_1$ and $\g_{-1}$ are eight-dimensional.  Here is a
list of the roots $\beta$ of $\alpha$-height~$1$, sorted according to
the eigenspace decomposition of Proposition~\ref{peigen}:
\[
\begin{tabular}{c|c|c}
$\beta$&$\gen{\rho-2\alpha,\beta}$&$\gen{\alpha,\beta}$\\
\hline
$\rho-\alpha$&$3$&$-1$\\
$\alpha+\alpha_1+\alpha_3,\alpha+\alpha_1+\alpha_4,\alpha+\alpha_3+\alpha_4$&$1$&$0$\\
$\alpha+\alpha_1,\alpha+\alpha_3,\alpha+\alpha_4$&$-1$&$1$\\
$\alpha$&$-3$&$2$
\end{tabular}
\]

As mentioned in the introduction, the quartic form $q$ on the
$8$-dimensional space $\g_1$ is the same as that examined by
Bhargava in \cite{bhargava}.

To establish the stabilizer of the quartic form, we follow a similar
strategy to that employed in the proof of Theorem~\ref{tstabq}:
We define the spaces $A$ and $B$ and cubic forms on them as in the
previous section.
We adjust an element $g\in\GL(\g_1)$ that stabilizes the quartic
form to obtain an element that also fixes $x_\alpha + x_{\rho-\alpha}$,
then apply Lemma~\ref{lstabqvaux} to obtain a
$g'$ that preserves the spaces $A$ and $B$ and stabilizes
the cubic forms on them.  In this case $A$ and $B$ are simple enough
so that we can give the cubic forms explicitly and determine a
suitable subgroup of $\GL(\g_1)$ that contains $g'$.

\begin{thm}\label{tstabqd4}
The stabilizer of the quartic form on $\g_1$ when $G=D_4$ is
$\gen{\SL_2^3,\mu_4}\rtimes S_3$, where $S_3$ is the symmetric group corresponding
to the diagram automorphisms of $D_4$.
\end{thm}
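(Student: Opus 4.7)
The plan is to mirror the proof of Theorem~\ref{tstabq} in the $E_8$ case, replacing the appeal to the stabilizer of the $E_6$-invariant cubic on the 27-dimensional minuscule representation by an explicit computation on the three-dimensional spaces $A$ and $B$.

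For the forward containment $\gen{\SL_2^3,\mu_4}\rtimes S_3\subseteq\Stab(q)$, I note that $\SL_2^3=\goss$ stabilizes $q$ by Lemma~\ref{lgoss}; $\mu_4$ acts on $\g_1$ by scalars $\zeta$ with $\zeta^4=1$, hence fixes the quartic; and $S_3$ is realized by the triality automorphisms of $D_4$, which permute $\alpha_1,\alpha_3,\alpha_4$, fix $\alpha=\alpha_2$, and (after standard normalization) fix $x_\rho$ and $x_{-\rho}$, so preserve $q$.

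For the reverse containment, take $g\in\Stab(q)$ and set $v=x_\alpha+x_{\rho-\alpha}$. Since $v$ lies in the dense orbit (Proposition~\ref{porbits}) and $\SL_2^3$ acts transitively on this orbit up to the scalar action of $\Gm$, there are $z\in\SL_2^3$ and $\lambda\in F^\times$ with $zg\cdot v=\lambda v$; the conditions $q(v)\ne 0$ and $zg\in\Stab(q)$ force $\lambda\in\mu_4$. After adjusting by $\lambda^{-1}z\in\gen{\SL_2^3,\mu_4}$, I may assume $g$ fixes $v$. Lemma~\ref{lstabqvaux} then produces $g'$ in the same coset modulo $\gen{\SL_2^3,\mu_4}$ that preserves $A$, $B$, the bilinear form, and the cubic forms $f_1$ and $f_2$.

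The key computation is to make $f_1$ and $f_2$ explicit. The bases of $A$ and $B$ consist of the three $x_{\alpha+\alpha_i+\alpha_j}$ (for $\{i,j\}\subset\{1,3,4\}$) and the three $x_{\alpha+\alpha_i}$ (for $i\in\{1,3,4\}$) respectively. By Proposition~\ref{pq}, $q(x_\alpha,x_{\beta_1},x_{\beta_2},x_{\beta_3})$ vanishes unless $\alpha+\beta_1+\beta_2+\beta_3=2\rho$; scanning triples of $A$-roots shows this happens only when the three $\beta_i$ are distinct and the four roots form a mutually orthogonal quadruple. Hence $f_1(a)=c\,a_1a_2a_3$ for some nonzero $c\in F$, and $f_2(b)=c'\,b_1b_2b_3$ analogously. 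The stabilizer in $\GL_3$ of such a nondegenerate diagonal cubic consists of the monomial matrices whose nonzero entries multiply to~$1$, that is, $T_2\rtimes S_3$ with $T_2=\{(t_1,t_2,t_3)\in\Gm^3:t_1t_2t_3=1\}$. So $g'|_A\in T_2\rtimes S_3$, and since $\gen{-,-}$ induces a nondegenerate pairing between $A$ and $B$, $g'|_B$ is the contragredient of $g'|_A$.

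To finish, I identify this data with the geometric picture. Under the $\SL_2^3$-equivariant isomorphism $\g_1\cong V_1\otimes V_2\otimes V_3$, $v$ corresponds to $u_1\otimes u_2\otimes u_3+v_1\otimes v_2\otimes v_3$, and its stabilizer in $\SL_2^3\rtimes S_3$ is exactly the subgroup of diagonal triples $(\operatorname{diag}(t_i,t_i^{-1}))$ with $t_1t_2t_3=1$, extended by the triality $S_3$. Restriction to $A$ gives an isomorphism onto $T_2\rtimes S_3$; hence $g'$ is the unique lift of $g'|_A$, and $g\in\gen{\SL_2^3,\mu_4}\rtimes S_3$. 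The main obstacle will be this last matching step: verifying that the algebraic stabilizer of $f_1$ on $A$ coincides with the image of the geometric stabilizer of $v$ inside $\SL_2^3\rtimes S_3$ under restriction to $A$, and that this restriction is injective---both of which rest on the tensor-product description of $\g_1$ together with the nondegeneracy of the $A$--$B$ pairing.
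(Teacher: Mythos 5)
Your outline tracks the paper's proof almost step for step: the forward containment, the reduction to an element fixing $v=x_\alpha+x_{\rho-\alpha}$ via transitivity on the dense orbit and a $\mu_4$ adjustment, the appeal to Lemma~\ref{lstabqvaux}, the computation that $f_1$ and $f_2$ are (up to a unit) the diagonal cubics $\lambda_1\lambda_2\lambda_3$, the identification of their stabilizers with the monomial group $T_2\rtimes S_3$ by unique factorization, and the observation that $g'|_B$ is forced by $g'|_A$ through the nondegenerate pairing. Two points deserve comment. First, on the forward containment you invoke a ``standard normalization'' under which triality fixes $x_\rho$; the paper actually proves this, writing $x_\rho$ as the iterated bracket \eqref{eqc0} and checking via the structure-constant identity \eqref{eqcarter4} that the expression is symmetric in the subscripts $1,3,4$. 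For the present theorem your shortcut happens to be harmless---an automorphism with $\sigma(x_\rho)=cx_\rho$, $c=\pm1$, scales $q$ by $c^2=1$---but the exact fixing of $x_\rho$ is what Corollary~\ref{cstabqbd4} needs, so the sign cannot be waved away in the end.

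Second, and more substantively, the step you yourself flag as ``the main obstacle'' is where the remaining content lies: one must show that every monomial matrix in $T_2\rtimes S_3$ acting on $A$, together with the contragredient action on $B$ and the fixing of $x_\alpha$ and $x_{\rho-\alpha}$, is actually realized by an element of $\SL_2^3\rtimes S_3$. Your route through the tensor-product model $\g_1\cong V_1\otimes V_2\otimes V_3$ is correct in substance (the stabilizer of $v$ in $\SL_2^3\rtimes S_3$ is indeed $T_2\rtimes S_3$ and restricts isomorphically to $A$), but that model is nowhere established in the paper and setting it up rigorously costs about as much as the direct argument. The paper instead computes explicitly: by Lemma~19(c) of \cite{steinberg}, the torus elements attached to $t_1h_{\alpha_1}+t_3h_{\alpha_3}+t_4h_{\alpha_4}$ act on the basis of $A$ by $(t_1,t_1,t_1^{-1})$, $(t_3,t_3^{-1},t_3)$, $(t_4^{-1},t_4,t_4)$; these generate $\Gm^3$, the condition $t_1t_3t_4=1$ cuts out exactly the product-one subgroup while forcing $x_\alpha$ to be fixed, and injectivity of restriction to $A$ follows since $g'$ fixes $x_\alpha$, $x_{\rho-\alpha}$ and preserves the pairing. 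So your plan is sound and is essentially the paper's, but as written the decisive surjectivity/injectivity step is announced rather than proved; it must be closed either by proving the tensor decomposition or by the paper's torus computation.
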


It can be shown that $S_3$ acts trivially on $\mu_4$ here (see Section~$12$ of
\cite{garibaldi}), so we could also write the group as $\gen{\SL_2^3\rtimes S_3,\mu_4}$.

\begin{proof}
Since $\goss=\SL_2^3$ and $\mu_4$ both stabilize the quartic form,
$\gen{\SL_2^3,\mu_4}$ is in $\Stab(q)$.  We will now show that the
diagram automorphisms also stabilize the quartic form.

It will suffice to show that a diagram automorphism fixes $x_\rho$ and $x_{-\rho}$. 
By Corollaire $5.5$ bis in \cite{sga3}, Expos\'e $23$, an outer automorphism
of $\g$ may be taken to act on the Chevalley basis elements $x_{\alpha_i}$
corresponding to the simple roots by permuting the subscripts, and to 
act on the elements $h_i = [x_{\alpha_i},x_{-\alpha_i}]$ by applying the
same permutation to the subscripts; thus the elements $x_{-\alpha_i}$
are also permuted in the same way.  We will write $x_\rho$ in terms of
the $x_{\alpha_i}$, and show that this expression is unaltered by a
permutation of the subscripts $1$, $3$ and $4$; the same argument with
the negatives of the roots will show that $x_{-\rho}$ is fixed as well.

The highest root of $D_4$ is $\rho=\alpha_1+2\alpha_2+\alpha_3+\alpha_4$.
We write this as $\rho = \alpha_2 + \alpha_1 + \alpha_3 + \alpha_4 + \alpha_2$;
in this expression each partial sum is also a root.  Thus we have
\begin{equation}\label{eqc0}
x_\rho = c [x_{\alpha_2}, [x_{\alpha_4}, [x_{\alpha_3}, [x_{\alpha_1}, x_{\alpha_2}]]]],
\end{equation}
where $c$ is a constant (in fact, $c=\pm1$ since all the roots are
long and thus the structure constants are $\pm1$).  Our claim is that this
expression is unaltered when the factors $x_{\alpha_1}$, $x_{\alpha_3}$,
$x_{\alpha_4}$ are permuted.

To verify the claim for the permutation that
interchanges $1$ and $3$, we must show that
$[x_{\alpha_3}, [x_{\alpha_1}, x_{\alpha_2}]] = [x_{\alpha_1}, [x_{\alpha_3}, x_{\alpha_2}]]$;
this is equivalent to the following structure constant equation:
\begin{equation}\label{eqc1}
c_{\alpha_1,\alpha_2} c_{\alpha_3,\alpha_1+\alpha_2} =
 c_{\alpha_3,\alpha_2} c_{\alpha_1,\alpha_2+\alpha_3}.
\end{equation}
To obtain \eqref{eqc1}, we apply \eqref{eqcarter4} with $\beta=\alpha_1+\alpha_2$,
$\gamma=\alpha_2+\alpha_3$, $\delta=-\alpha_2$ and
$\epsilon=-\alpha_1-\alpha_2-\alpha_3$; this yields
\begin{multline*}
c_{\alpha_1+\alpha_2,\alpha_2+\alpha_3} c_{-\alpha_2,-\alpha_1-\alpha_2-\alpha_3} +{}\\
	 c_{\alpha_2+\alpha_3,-\alpha_2} c_{\alpha_1+\alpha_2,-\alpha_1-\alpha_2-\alpha_3} +
	 c_{-\alpha_2,\alpha_1+\alpha_2} c_{\alpha_2+\alpha_3,-\alpha_1-\alpha_2-\alpha_3} = 0.
\end{multline*}
The sum $\alpha_1+2\alpha_2+\alpha_3$ has $\alpha$-height~$2$ but is not equal to $\rho$,
so it is not a root; thus the first term is zero.  Applying the rules for
structure constants from Section~\ref{secforms}, we have
$c_{\alpha_2+\alpha_3,-\alpha_2}=c_{-\alpha_2,-\alpha_3}=c_{\alpha_3,\alpha_2}$,
$c_{\alpha_1+\alpha_2,-\alpha_1-\alpha_2-\alpha_3}=c_{\alpha_3,\alpha_1+\alpha_2}$,
$c_{-\alpha_2,\alpha_1+\alpha_2}=c_{-\alpha_1,-\alpha_2}=-c_{\alpha_1,\alpha_2}$,
$c_{\alpha_2+\alpha_3,-\alpha_1-\alpha_2-\alpha_3}=c_{\alpha_1,\alpha_2+\alpha_3}$.
Thus we have $c_{\alpha_3,\alpha_2}c_{\alpha_3,\alpha_1+\alpha_2}-c_{\alpha_1,\alpha_2}
c_{\alpha_1,\alpha_2+\alpha_3}=0$.  Since all the structure constants involved are
$\pm1$, this is equivalent to the statement that their product is $1$;
this in turn is equivalent to \eqref{eqc1}.

By permuting the roots in the expression for $\rho$, the same argument
applies to any transposition of two of the subscripts $1$, $3$ and $4$.
Since all the transpositions fix $x_\rho$ and $x_{-\rho}$, all the
diagram automorphisms do.  Thus $\gen{\SL_2^3,\mu_4}\rtimes S_3$ is
contained in the stabilizer of the quartic form.

We now consider the reverse inclusion.
Let $v = x_\alpha + x_{\rho-\alpha}$.  As in the proof of
Theorem~\ref{tstabq}, given some $g\in\GL(\g_1)$ which
stabilizes $q$, there exists some $z\in\goss$ such that
$zg\cdot v$ is a scalar multiple of $v$, and there is some
$k\in\mu_4$ such that $g''=k z g$ fixes $v$ and still
stabilizes $q$.

Applying Lemma~\ref{lstabqvaux} to $g''$, we obtain an element
$g'$ that preserves $A$ and $B$ and stabilizes $\gen{-,-}$ and
the cubic forms on $A$ and $B$.

By definition, the subspace $A$ is generated by the root subspaces corresponding to
roots orthogonal to $\alpha$; examining the list of roots in $\g_1$,
these are $\beta=\alpha+\alpha_1+\alpha_3$,
$\gamma=\alpha+\alpha_1+\alpha_4$ and $\delta=\alpha+\alpha_3+\alpha_4$.
We easily check that $\alpha$, $\beta$, $\gamma$ and $\delta$ are mutually
orthogonal.  For an arbitrary element
$x = \lambda_1 x_\beta + \lambda_2 x_\gamma + \lambda_3 x_\delta$ of $A$,
we find that the cubic form is
\[
\frac16 q(x_\alpha,x,x,x)=\lambda_1\lambda_2\lambda_3 q(x_\alpha,x_\beta,x_\gamma,x_\delta),
\]
since the terms with a repeated argument are zero by Lemma~\ref{laab}.
By Proposition~\ref{pq}, this is $\epsilon \lambda_1\lambda_2\lambda_3$,
where $\epsilon=\pm1$ is a product of structure constants.

Let $T=(a_{ij})$, $1\le i,j\le 3$, be the matrix of the linear transformation
on~$A$ given by $x\mapsto g'\cdot x$ with respect to the basis
$x_\beta$, $x_\gamma$, $x_\delta$.  The value of the cubic form is the same for
$x = \lambda_1 x_\beta + \lambda_2 x_\gamma + \lambda_3 x_\delta$ and
$g'\cdot x$, so we have
\[
\lambda_1\lambda_2\lambda_3 = (a_{11}\lambda_1+a_{12}\lambda_2+a_{13}\lambda_3)
    (a_{21}\lambda_1+a_{22}\lambda_2+a_{23}\lambda_3)
    (a_{31}\lambda_1+a_{32}\lambda_2+a_{33}\lambda_3)
\]
for all $\lambda_1,\lambda_2,\lambda_3\in F$.
By unique factorization in $F[\lambda_1,\lambda_2,\lambda_3]$,
the three factors on the right-hand side are (up to units)
$\lambda_1,\lambda_2,\lambda_3$, say $c_1\lambda_1,c_2\lambda_2,c_3\lambda_3$,
with $c_1 c_2 c_3 = 1$.  If the factors occur in that order, then
$T$ is diagonal, with the third entry determined by the first two; each
such $T$ corresponds to an element $(c_1,c_2,c_3)$ of $\Gm\times \Gm\times \Gm$
for which the product of the three components is 1.  However, the order of the
factors may be different, so in general $T$ may be an element
of $(\Gm\times \Gm\times \Gm)\rtimes S_3$.

The subspace $B$ is generated by the root subspaces corresponding to
the roots $\rho-\beta=\alpha+\alpha_4$, $\rho-\gamma=\alpha+\alpha_3$
and $\rho-\delta=\alpha+\alpha_1$.  As $\alpha,\beta,\gamma,\delta$
are mutually orthogonal, so are $\rho-\alpha,\rho-\beta,\rho-\gamma,\rho-\delta$.
The cubic form on $B$ is given by $\frac16 q(x_{\rho-\alpha},x,x,x)$;
for $x= \lambda_1 x_{\rho-\beta} + \lambda_2 x_{\rho-\gamma} + \lambda_3 x_{\rho-\delta}$
this is, as in the previous case, $\pm \lambda_1\lambda_2\lambda_3$.
As before, $g'$ must map $x_{\rho-\beta}$, $x_{\rho-\gamma}$ and
$x_{\rho-\delta}$ to scalar multiples of the same basis elements, possibly
permuted.

However, since $g'$ stabilizes $\gen{-,-}$, the action of $g'$ on $B$
can be computed given its action on $A$.  
Suppose, for example, that $g'$ maps $x_\beta$ to $c x_\gamma$ in $A$, then
$\gen{x_\beta,x_{\rho-\beta}}=\gen{c x_\gamma, g'\cdot x_{\rho-\beta}}$;
since this must be $c_{\beta,\rho-\beta}$, we have that $g'\cdot x_{\rho-\beta}$
is necessarily $c_{\beta,\rho-\beta} c_{\gamma,\rho-\gamma} c^{-1} x_{\rho-\gamma}$.
In general, $\beta$ and $\gamma$ may be replaced by any of $\beta$,
$\gamma$ or $\delta$, with a similar result.
Hence the action of $g'$ on $B$ is determined by its action on $A$;
in particular, if acts diagonally on $A$, it also does so on $B$.

It remains only to show that an element $g'$ that corresponds to
element of $\Gm\times \Gm\times \Gm$ is an element of $\SL_2^3$.
We will consider the action of an element of $\SL_2^3$ that corresponds
to an element of $\h$ of the form $t_1 h_{\alpha_1} + t_3 h_{\alpha_3} + t_4 h_{\alpha_4}$.
By Lemma~$19$(c) in \cite{steinberg}, the action of the element corresponding
to $t_1 h_{\alpha_1}$ takes $x_\beta$ to $t_1^{\gen{\beta,\alpha_1}} x_\beta$,
which is $t_1 x_\beta$ since $\gen{\beta,\alpha_1} = 1$.  Similarly, it
takes $x_\gamma$ to $t_1 x_\gamma$ since $\gen{\gamma,\alpha_1} = 1$ and
takes $x_\delta$ to $t_1^{-1} x_\delta$ since $\gen{\delta,\alpha_1} = -1$;
thus its action on $A$ is that of the element $(t_1,t_1,t_1^{-1})$ in
$\Gm\times \Gm\times \Gm$.  In the same fashion,
we find that $t_3 h_{\alpha_3}$ corresponds to $(t_3,t_3^{-1},t_3)$ and
$t_4 h_{\alpha_4}$ to $(t_4^{-1},t_4,t_4)$.  Since these classes of
elements are multiplicatively independent, they generate
$\Gm\times \Gm\times \Gm$; the elements with the product of the
components equal to $1$ come from elements of the form
$t_1 h_{\alpha_1} + t_3 h_{\alpha_3} + t_4 h_{\alpha_4}$ with
$t_1 t_3 t_4 = 1$.  Since $\gen{\alpha,\alpha_i} = -1$ for $i=1,3,4$,
this element takes $x_\alpha$ to $t_1^{-1} t_3^{-1} t_4^{-1} x_\alpha = x_\alpha$,
so it fixes $x_\alpha$ just as $g'$ does.   The action on the remaining basis
elements, namely $x_{\rho-\alpha}$ and those of $B$, must also correspond to
that of $g'$ because an element of $\SL_2^3$
stabilizes the bilinear form.

Thus $g'$ is in $\SL_2^3\rtimes S_3$, from which it follows that
the original $g\in\GL(\g_1)$ stabilizing the quartic form is in
$\gen{\SL_2^3,\mu_4}\rtimes S_3$.
\end{proof}

The determination of the group that stabilizes both $q$ and the
bilinear form $\gen{-,-}$ is parallel to Corollary~\ref{cstabqb}.

\begin{cor}\label{cstabqbd4}
In the case $G=D_4$, the subgroup of $\GL(\g_1)$ stabilizing both the quartic
form and the skew-symmetric bilinear form, $\Stab(q,\gen{-,-})$, is
$\SL_2^3\rtimes S_3$
\end{cor}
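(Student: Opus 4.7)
The plan mirrors Corollary~\ref{cstabqb}. Lemma~\ref{lgoss} shows that $\SL_2^3=\goss$ preserves both forms, and the diagram automorphisms $S_3$ fix $x_\rho$ (verified in the proof of Theorem~\ref{tstabqd4}, which shows the factor $c$ in \eqref{eqc0} is invariant under permutations of $1,3,4$) and therefore preserve $\gen{-,-}$. Combined with Theorem~\ref{tstabqd4} this yields the chain
\[
\SL_2^3\rtimes S_3 \;\subseteq\; \Stab(q,\gen{-,-}) \;\subseteq\; \Stab(q) \;=\; \gen{\SL_2^3,\mu_4}\rtimes S_3,
\]
so only the inclusion on the left-hand side needs strengthening: I need to show nothing in $\gen{\SL_2^3,\mu_4}\setminus\SL_2^3$ stabilizes $\gen{-,-}$.

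The crux is to identify the scalar action of $-1\in\mu_4$ with an element of $\SL_2^3$, so as to get that $\gen{\SL_2^3,\mu_4}$ consists of just two cosets of $\SL_2^3$. I would take the torus element $t=h_{\alpha_1}(-1)h_{\alpha_3}(-1)h_{\alpha_4}(-1)\in\SL_2^3$ and apply Lemma~19(c) of \cite{steinberg} to compute that $t$ acts on each basis vector $x_\beta$, with $\beta$ a root of $\alpha$-height~$1$, by the scalar $(-1)^{\gen{\beta,\alpha_1}+\gen{\beta,\alpha_3}+\gen{\beta,\alpha_4}}$. A case-by-case check on the eight roots listed in the table of Section~\ref{secD4} (using the $D_4$ Cartan matrix) shows this exponent is always odd; for instance, $\gen{\alpha,\alpha_i}=-1$ for $i=1,3,4$ gives exponent $-3$, while $\gen{\rho-\alpha,\alpha_i}=1$ gives exponent $3$, and the intermediate cases give $\pm 1$. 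Thus $t$ coincides with the scalar $-1$ on all of $\g_1$, so $-1\in\SL_2^3$ as operators on $\g_1$, and $\gen{\SL_2^3,\mu_4}=\SL_2^3\cup i\SL_2^3$ for a primitive fourth root $i\in F$.

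The conclusion is then immediate: $\gen{ix,iy}=i^2\gen{x,y}=-\gen{x,y}$, so no element of $i\SL_2^3$ preserves $\gen{-,-}$. Intersecting with the $S_3$ factor, which we have already seen preserves both forms, gives
\[
\Stab(q,\gen{-,-}) \;=\; \SL_2^3\rtimes S_3.
\]
The only genuine obstacle in the argument is the torus computation identifying the scalar $-1$ with an element of $\SL_2^3$; everything else is a direct transcription of the $E_8$ case. This step is short but essential, since without it one cannot rule out the possibility that $\mu_4\cap\SL_2^3=\{1\}$ inside $\GL(\g_1)$, which would leave four cosets rather than two to analyze.
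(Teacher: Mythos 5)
Your proof is correct and follows essentially the same route as the paper: establish the chain of containments, observe that $-1$ already acts on $\g_1$ via $\SL_2^3$ so that only the coset $i\SL_2^3$ needs to be excluded, and exclude it because $\gen{ix,iy}=-\gen{x,y}$. The only difference is that you verify the membership of the scalar $-1$ in the image of $\SL_2^3$ by an explicit torus computation, whereas the paper simply cites $-1\in\SL_2$; your added detail is sound (the exponent $\gen{\beta,\alpha_1}+\gen{\beta,\alpha_3}+\gen{\beta,\alpha_4}$ is indeed always odd for roots of $\alpha$-height $1$) but does not change the argument.
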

\begin{proof}
The previous theorem and the fact that $\SL_2^3$ and the
diagram automorphism stabilize both forms
yield the following containments:
\[
\SL_2^3\rtimes S_3 \subseteq \Stab(q,\gen{-,-}) \subseteq \Stab(q) = \gen{\SL_2^3,\mu_4}\rtimes S_3.
\]
Since $-1\in\SL_2$, we also have $-1\in\SL_2^3$.  Thus
$\SL_2^3\rtimes S_3$ is an index $2$ subgroup of $\gen{\SL_2^3,\mu_4}\rtimes S_3$.
However, the coset containing $i$, a primitive fourth root of unity, is not in
$\Stab(q,\gen{-,-})$ since $\gen{ix,iy} = -\gen{x,y}$ for any $x,y\in\g_1$.
Therefore $\Stab(q,\gen{-,-})=\SL_2^3\rtimes S_3$.
\end{proof}

\section{Non-split groups}\label{secnonsplit}

In the preceding sections, we assumed that $G$ was split over $F$.  This was only for convenience;
in this section we will show that most of our results hold for quite general Freudenthal triple systems.

Suppose $G$ is an absolutely almost simple linear algebraic group, not of type $A$ or $C$, over
a field $F$ of characteristic $\ne 2,3$.  Fix a maximal $F$-torus $T$, which we may assume contains
a maximal $F$-split torus, and also fix a set $\Delta$ of simple roots for $G$ with respect to $T$
over a separable closure $\Fs$ of $F$.

There is a uniquely determined root $\alpha\in\Delta$ as in Section~\ref{secprelim}.  We require
that, in the Tits index of $G$ as defined in \cite{tits}, the vertex $\alpha$ is circled.  This
is equivalent to having an $F$-homomorphism $\rho^\vee \colon \Gm \rightarrow T$
corresponding to the coroot $\rho$ (i.e., such that $\Lie(\im \rho^\vee) \otimes \Fs$ is $\Fs h_\rho$);
see Corollaire $6.9$ in \cite{boreltits1}.  We grade the Lie algebra $\g$ of $G$ by setting 
\[
\g_i := \{x\in\g \mid \text{$\rho^\vee(t) x = t^i x$ for all $t\in\Fs^\times$}\}.
\]
When $G$ is split (e.g., if we extend scalars to $\Fs$), we obtain the same grading as in
Section~\ref{secprelim}.  We choose a nonzero vector $x_\rho\in\g_2$, which gives a
skew-symmetric bilinear form $\gen{-,-}$ and a quartic form~$q$ on $\g_1$ by the same formulas
as in Section~\ref{secprelim}.

Now Lemmas/Propositions/Theorems/Corollaries \ref{lgoss}, \ref{lnondeg}, \ref{lxxy}, \ref{porbit1},
\ref{prank1}, \ref{leq5}, \ref{tFTS}, \ref{lsrinvt}, \ref{pgroups} and \ref{corthoinvt} all hold
over~$F$ without any change in their statements.  Indeed, it suffices to verify each
over~$\Fs$, where $G$ is split.

Theorems/Corollaries \ref{tstabq}, \ref{cstabqb}, \ref{tstabqd4} and \ref{cstabqbd4}
can be viewed as determining the $\Fs$-isomorphism class
of their respective stabilizer groups (which are defined over $F$).

For readers interested in Freudenthal triple systems, we now suppose that we are given such a triple
system $(V, q, \gen{-,-})$---denoted briefly by $V$---defined over $F$ such that
$V\otimes\Fs$ can be identified with one of the triple systems constructed in
Sections~\ref{secprelim}--\ref{secfts}.  We claim $V$ can be constructed from some group $G$
defined over $F$ by using the construction given earlier in this section and thus the results listed
also hold for $V$.

We illustrate the claim in the case where $H = \Stab(q, \gen{-,-})$ is of type $E_7$;
equivalently, $V\otimes\Fs$ is obtained from a group of type $E_8$.  Since the $56$-dimensional
representation of $H$ is defined over $F$, $H$ is obtained by twisting the split simply-connected
group $E_7^\sc$ of type $E_7$ by a $1$-cocycle $\eta$ (in Galois cohomology) with values in
$E_7^\sc(\Fs)$.  If the split group of type $E_8$ (which naturally contains $E_7^\sc$) is also
twisted by $\eta$, we find a copy of $H$ inside a group $G$ of type $E_8$.  The construction above now
yields a Freudenthal triple system $V'$ with automorphism group $H$, which must be similar to
$V$ by \cite{garibaldi2}, Theorem 4.16(2).  By scaling $x_\rho$, we can arrange for $V'$ to be isomorphic
to $V$, as desired.

In addition to the $56$-dimensional representation of a group of type $E_7$,
we see in the same way that the results of this paper apply to the
half-spin representation of a group of type $D_5$, the natural
$20$-dimensional representation of a group of type $A_5$ and the natural
$8$-dimensional representation of a group of type $A_1 \times A_1 \times A_1$,
whenever such representations are defined over $F$.

\section*{Acknowledgment}

{\small
This paper is based on my Ph.D. dissertation, \cite{helenius}, at Emory University,
and could not have been created without the assistance and guidance of my advisor,
Skip Garibaldi.
}


\end{document}